\newcommand{\Z}{\mathbb{Z}}
\newcommand{\Q}{\mathbb{Q}}
\newcommand{\R}{\mathbb{R}}
\renewcommand{\H}{\mathbb{H}}
\newcommand{\mQ}{\mathcal{Q}}
\newcommand{\mN}{\mathcal{N}}
\newcommand{\ep}{\varepsilon}
\let\temp\phi
\let\phi\varphi
\let\varphi\temp
\renewcommand{\(}{\left(}
\renewcommand{\)}{\right)}
\newcommand{\ord}{\operatorname{ord}}
\newcommand{\SL}{\operatorname{SL}}
\newcommand{\GL}{\operatorname{GL}}
\newcommand{\Frob}{\operatorname{Frob}}
\newcommand{\Sh}{\operatorname{Sh}}
\newcommand{\Gal}{\operatorname{Gal}}
\newcommand{\Tr}{\operatorname{Tr}}
\newcommand{\sk}{\big|_k }
\newcommand{\pfrac}[2]{\left(\frac{#1}{#2}\right)}
\newcommand{\pmfrac}[2]{\left(\mfrac{#1}{#2}\right)}
\newcommand{\ptfrac}[2]{\left(\tfrac{#1}{#2}\right)}
\newcommand{\pMatrix}[4]{\left(\begin{matrix}#1 & #2 \\ #3 & #4\end{matrix}\right)}
\renewcommand{\pmatrix}[4]{\left(\begin{smallmatrix}#1 & #2 \\ #3 & #4\end{smallmatrix}\right)}
\newcommand{\tx}{\text}
\newcommand{\spmod}[1]{\ensuremath{\,(#1)}}
\newtheorem{theorem}{Theorem}[section]
\newtheorem{lemma}[theorem]{Lemma}
\newtheorem{corollary}[theorem]{Corollary}
\newtheorem{proposition}[theorem]{Proposition}
\theoremstyle{remark}
\newtheorem*{remark}{Remark}
\numberwithin{equation}{section}
   \def\MR#1{}
\begin{document}


\title{Scarcity of congruences for the partition function}

\date{\today}
\author{Scott Ahlgren}
\address{Department of Mathematics\\
University of Illinois\\
Urbana, IL 61801} 
\email{sahlgren@illinois.edu} 

\author{Olivia Beckwith}
\address{Department of Mathematics\\
Tulane University\\
New Orleans, LA 70118 }
\email{obeckwith@tulane.edu} 

\author{Martin Raum}
\address{Chalmers tekniska högskola och G\"oteborgs Universitet\\
Institutionen för Matematiska vetenskaper\\
SE-412 96 Göteborg, Sweden}
\email{martin@raum-brothers.eu}

\thanks{The first author was  supported by a grant from the Simons Foundation (\#426145 to Scott Ahlgren).
The third author was partially supported by Vetenskapsr\aa det Grant~2019-03551.
}

 
\begin{abstract}  The arithmetic properties of the ordinary partition function  $p(n)$ have been the topic of intensive study for the past century.
Ramanujan proved that there  are linear congruences of the form $p(\ell n+\beta)\equiv 0\pmod\ell$ for the primes $\ell=5, 7, 11$, and it is known that
there are no others of this form.  On the other hand, for every prime $\ell\geq 5$ there are infinitely many examples of congruences of the form
$p(\ell Q^m n+\beta)\equiv 0\pmod\ell$ where $Q\geq 5$ is prime and $m\geq 3$. This leaves open the question of the existence of such congruences when $m=1$ or $m=2$
(no examples in these cases are known).  We prove in a precise sense that such congruences, if they exist, are exceedingly scarce.  Our methods involve
a careful study of modular forms of half integral weight on the full modular group which are related to the partition function.  Among many other tools, we use 
 work of Radu which describes expansions of such modular forms along square classes at cusps of the modular curve $X(\ell Q)$, Galois representations 
 and the arithmetic large sieve.
\end{abstract}


\maketitle


 \section{Introduction}
The partition function $p(n)$ counts the number of ways to represent the positive integer~$n$ 
as the sum of a non-increasing sequence of positive integers.  By convention we agree that $p(0):=1$ and that $p(n):=0$ if $n\not\in \{0, 1, 2, \dots\}$.
This function has been long-studied in combinatorics and number theory. 
Ramanujan proved the famous congruences 
\begin{gather}\label{eq:ramcong}
p(\ell n+\beta_\ell)\equiv 0\pmod\ell
\quad\text{for }\ell=5, 7, 11
\tx{,}
\end{gather}
where $\beta_\ell:=\frac1{24}\pmod\ell$
(when we speak of such a  congruence we   mean  that the partition values vanish modulo $\ell$ for all integers $n$).
Much of the interest in such congruences arises from the relationship between the partition function and Dedekind's eta function.  These congruences are a prototypical example of arithmetic phenomena which occur for a wide class of weakly holomorphic modular forms.

Ramanujan conjectured extensions of these results for arbitrary powers of these primes.  Proofs in the cases $\ell=5, 7$ are attributed to Ramanujan and Watson 
\cite{ram_cong_mathz, ram_cong_PLMS, ram_someprop, watson}; the 
case $\ell=11$ is much more difficult and was resolved by Atkin \cite{atkin_glasgow}.  
Several decades later,  examples of congruences for primes $\ell\leq 31$ were found by Newman, Atkin,  and O'Brien \cite{newman, atkin-obrien, atkin_mult}.
These examples are not as simple as \eqref{eq:ramcong}; they take the form
\begin{gather}\label{eq:partcong}
p(\ell Q^m n+\beta)\equiv 0\pmod\ell
\tx{,}
\end{gather}
where $Q$ is a prime distinct from $\ell$.
In Ono's groundbreaking work \cite{Ono_annals} it was shown  for each prime $\ell\geq 5$ that there are infinitely many primes $Q$ for which
\eqref{eq:partcong} holds with $m=4$.  This result was subsequently generalized in several directions  in the case of the partition function \cite{Ahlgren_mathann, Ahlgren-Ono}
and for general weakly holomorphic modular forms \cite{Treneer_1, Treneer_2}.   
For a more complete history,  one may consult for example
\cite{berndt-ono, ahlgren-ono_expos, ahlgren-ono_conj}.

While there is a long literature proving that congruences of the form \eqref{eq:partcong} exist, there are  fewer results on the non-existence of congruences. Ramanujan \cite{ram_cong_PLMS} speculated that the  congruences \eqref{eq:ramcong} are the only examples of the form $p(\ell n+\beta)\equiv 0\pmod\ell$.  Kiming and Olsson \cite{Kiming-Olsson} proved that any example must have $\beta\equiv\frac1{24}\pmod\ell$.   Ramanujan's speculation was later confirmed by the first author and Boylan \cite{Ahlgren-Boylan}.
Radu \cite{Radu_subbarao} confirmed an old conjecture of Subbarao by proving that there are no congruences of the form
\begin{gather*}
p(mn+\beta)\equiv 0\pmod\ell
\quad\text{with }\ell=2, 3
\tx{.}
\end{gather*}
In later work \cite{Radu_aoconj}, Radu confirmed a conjecture of the first author and Ono by proving that if there is a congruence
\begin{gather*}
p(mn+\beta)\equiv 0\pmod\ell
\quad\text{with }\ell\geq 5\ \ \text{prime}
\tx{,}
\end{gather*}
then $\ell\mid m$ and $\pfrac{1-24\beta}\ell\in \{0, -1\}$; these results have been generalized to a wide class of weakly holomorphic modular forms
and mock theta functions 
 \cite{ahlgren-kim, andersen}.

After Ono's work, we know that for any prime $\ell\geq 5$ there are infinitely many primes $Q$ for which there is a congruence of the form
\begin{gather*}
p\(\ell Q^4 n+\beta\)\equiv 0\pmod \ell.
\end{gather*}
Atkin \cite{atkin_mult} discovered congruences of the form
\begin{gather}\label{eq:atkincong}
p\(\ell Q^3 n+\beta\)\equiv 0\pmod \ell.
\end{gather}
For $\ell=5, 7$ and $13$, he proved that such congruences exist for  infinitely many $Q$;
for example, there are $30$ values of $\beta$ which give rise to a congruence
\begin{gather*}
p\(13\cdot11^3\, n+\beta\)\equiv 0\pmod{13}.
\end{gather*}
Atkin describes a method to produce such congruences which requires ``accidental" values of certain Hecke eigenvalues,
and he gives examples of congruences \eqref{eq:atkincong} for $\ell\leq 31$.   Using this method, 
 Weaver \cite{weaver} and Johansson \cite{johansson} found many more examples (now more than $22$ billion) for these primes $\ell$.
 In recent work \cite{ahlgren-allen-tang} the first author, Allen and Tang have shown that there are infinitely many congruences of the form \eqref{eq:atkincong}
for every prime $\ell\geq 5$.

On the other hand, we know from \cite{Ahlgren-Boylan} that there are no congruences 
\begin{gather*}
p\(\ell n+\beta\)\equiv 0\pmod\ell
\quad\text{with }\ell\geq 13.
\end{gather*}
In view of these results, it is natural to ask if there are any congruences of the form
\begin{gather}\label{eq:Qellcong}
p(\ell Q n + \beta_{\ell, Q}) \equiv 0 \pmod{\ell}
\end{gather}
or
\begin{gather}\label{eq:quadcong}
p(\ell Q^2n+\beta_{\ell, Q})\equiv 0\pmod\ell.
\end{gather}
For every $Q$  such congruences   occur for $\ell=5, 7$, or $11$ because of the original Ramanujan congruences.
However, no other example of congruences \eqref{eq:Qellcong}, \eqref{eq:quadcong} is known (and we will show below that the first example, if it exists, would
involve very large values of $\ell$ and $Q$).

It is therefore natural to speculate that there are no such congruences outside of the trivial examples arising from \eqref{eq:ramcong}.
The goal of this paper is to prove that such congruences, if they exist,  are extremely scarce.  

To state the first result, suppose that $\ell\geq 5$  and $Q\geq 5$ are primes with $Q\neq \ell$. 
As mentioned above,  Radu \cite{Radu_aoconj} proved that \eqref{eq:Qellcong} can occur only when 
\begin{gather*}
\pmfrac{1-24\beta_{\ell, Q}}\ell\in \{0, -1\}.
\end{gather*}

We begin by stating one  of our main results in terms of the partition function (a stronger version is given below in Theorem~\ref{thm:main}).
  The theorem shows that either congruences modulo $\ell$ are scarce, or that      values of $p(n)$
 which are not divisible by $\ell$ are 
themselves  scarce (as described in \eqref{eq:pscarce}).    It is most useful to consider this in the context of  \eqref{eq:fldeldef}, 
which relates the partition values in \eqref{eq:pscarce} to the coefficients of a modular form of half-integral weight.  In particular, we would have \eqref{eq:pscarce} if the form $f_{\ell, \delta}$ which is defined in~\eqref{eq:fldelprop} below were congruent modulo $\ell$ to an iterated derivative of a theta series; this is explained in more detail in  the remark following Theorem~\ref{thm:main}.
 \begin{theorem} \label{thm:mainpart}
Suppose that $\ell\geq 5$ is prime, and
fix $\delta\in \{0, -1\}$.
Let $S$ be the set of primes $Q$ for which there exists a congruence of the form \eqref{eq:Qellcong}  with $\pfrac{1-24\beta_{\ell, Q}}\ell=\delta$.
Then one of the following is true.
\begin{enumerate}
\item $S$ has density zero, or
\item 
we have 
 \begin{gather}\label{eq:pscarce}
\#\left\{ n\leq X\ : \pmfrac{-n}\ell=\delta,  \ \ p\pmfrac{ n+1}{24}\not\equiv0\pmod\ell\right\}\ll \sqrt{X} \log X.
\end{gather}
\end{enumerate}
\end{theorem}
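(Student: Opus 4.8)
The plan is to dichotomize according to whether the modular form $f_{\ell,\delta}$ attached to the relevant square class of partition values (via \eqref{eq:fldeldef}) has, modulo $\ell$, a ``large'' or a ``small'' image under the Hecke algebra. First I would recall that a congruence of the form \eqref{eq:Qellcong} with $\pfrac{1-24\beta_{\ell,Q}}{\ell}=\delta$ translates, using Radu's description of expansions along square classes at the cusps of $X(\ell Q)$, into the statement that the prime $Q$ annihilates $f_{\ell,\delta}\bmod\ell$ in a suitable sense; concretely, that the $U_Q$-operator (or the Hecke operator $T_Q$) acts on $f_{\ell,\delta}$ modulo $\ell$ by a scalar forced by the nebentypus and weight. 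One then studies the mod-$\ell$ Galois representation $\rho$ attached to $f_{\ell,\delta}$ (after passing to the associated integral-weight form via the Shimura correspondence). The key structural alternative is: either the image of $\rho$ is ``large'' (contains $\SL_2$ of the residue field, up to the usual exceptions), or $\rho$ is exceptional — dihedral, or reducible, i.e.\ $f_{\ell,\delta}$ is congruent mod $\ell$ to a linear combination of theta series / Eisenstein-type objects.

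In the large-image case, I would invoke an effective Chebotarev-type or, better, a sieve argument: the set $S$ of primes $Q$ for which $T_Q f_{\ell,\delta}\equiv(\text{the forced scalar})\pmod\ell$ is contained in a union of Frobenius conjugacy classes (those with the prescribed trace and determinant) which is a proper subset of the Galois group, and hence $S$ has density zero. This is alternative (1). In the exceptional case, $f_{\ell,\delta}$ is congruent modulo $\ell$ to (an iterated $\Theta$-derivative of) a theta series; the coefficients of such a form are supported on a very thin set of $n$ — essentially those $n$ lying in finitely many arithmetic progressions together with a condition forcing $n$ (up to the $24$-shift and sign) to be representable by a fixed binary quadratic form, equivalently to have all prime factors in a fixed set of residue classes. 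Counting such $n\le X$ via the arithmetic large sieve (exactly as advertised in the abstract) yields the bound $\ll\sqrt{X}\log X$, which is precisely \eqref{eq:pscarce}, giving alternative (2). The passage from ``$f_{\ell,\delta}$ is a $\Theta$-derivative of a theta series mod $\ell$'' to the sieve input is where one uses that a nonzero coefficient at $n$ forces $-n$ (mod $\ell$, with sign $\delta$) and forces $n+1$ to factor through the square-class structure dictated by the quadratic form.

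The main obstacle I anticipate is making the first alternative genuinely effective and clean: one must rule out, or absorb into the count, the borderline ``small but not reducible'' cases (dihedral representations, and the small-image exceptions $A_4, S_4, A_5$, as well as representations whose projective image is too small because of congruences to forms of lower level or CM forms). Dihedral $\rho$ in particular does \emph{not} force density-zero $S$ by a naive Chebotarev count — the relevant trace condition can be satisfied on a positive-density set of primes — so one has to show that a dihedral $\rho$ corresponds exactly to the theta-series case and hence lands in alternative (2). Handling this requires a careful classification of the possible images, using that $f_{\ell,\delta}$ lives on the \emph{full} modular group in half-integral weight (which sharply constrains the level and nebentypus of the Shimura lift), and then checking that in every non-large-image scenario the coefficients are thin enough for the large sieve. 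Once the image trichotomy (large / dihedral-or-reducible / other exceptional) is pinned down and each exceptional branch is shown to imply the theta-congruence, the counting step is routine sieve theory and the density-zero step is routine Chebotarev.
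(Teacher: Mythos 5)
There is a genuine gap in your density-zero branch. You propose to deduce that $S$ has density zero from a Chebotarev count: the primes $Q\in S$ would have $\Frob_Q$ lying in the union of conjugacy classes with a prescribed trace (the forced Hecke eigenvalue) and determinant, and you conclude density zero because this union is a \emph{proper} subset of the Galois group. But Chebotarev assigns to a nonempty union of conjugacy classes a \emph{positive} density, namely its proportion of the group; a trace condition modulo $\ell$ is satisfied on a set of primes of density roughly $1/\ell$ (or $2/\ell$, allowing both signs of $\ep$), not zero. You flag this difficulty for dihedral images, but it is equally fatal in the large-image case. This is precisely why the paper does not argue through the eigenvalue condition at all: it is remarked after Theorem~\ref{thm:UQVQ} that the eigenform statement is ``much weaker'' than \eqref{eq:uqvq}. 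The paper's density-zero argument is elementary and of a completely different shape: from the relation $f_{\ell,\delta}|U_p\equiv c_p f_{\ell,\delta}|V_p$ and the support condition forced by Radu's square-class theorem, every prime $p\in S$ must satisfy a quadratic-residue constraint $\pfrac{Q_j n'_{Q_j}}{p}=\ep$ for each member of an infinite sequence of auxiliary primes $Q_j$ witnessing coefficients of odd $Q_j$-order; since each new $Q_j$ imposes an independent condition, $S$ has upper density at most $2^{-t}$ for every $t$. No Galois-image classification is needed for this branch.

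Your exceptional branch also substitutes a harder, unestablished claim for what is actually required. You want to classify the mod-$\ell$ image of the representation attached to the Shimura lift and show that every non-large image forces $f_{\ell,\delta}$ to be congruent to an iterated derivative of a theta series, from which thinness of the support would follow. The paper never proves such a theta-congruence (and indeed the remark after Theorem~\ref{thm:main} presents it only as a heuristic for why the bound is natural). Instead, when no infinite sequence of witness primes exists, the relation $f|U_{Q^{2j+1}}\equiv f|U_{Q^{2j+2}}V_Q$ holds for all but finitely many $Q$; Deligne's representations and Chebotarev are then used only to produce a \emph{positive-density} set $\mathcal{Q}$ of primes with matching Hecke eigenvalues, each of which imposes a quadratic condition $\pfrac{n'}{Q}=\pfrac{n'}{Q_0}$ on the square-free part of any $n$ with $a_{\ell,\delta}(n)\not\equiv 0\pmod\ell$. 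The arithmetic large sieve applied to these conditions gives $\ll\sqrt{X}\log X$ directly. So the two essential missing ingredients in your proposal are: (i) a mechanism that upgrades the per-prime congruence \eqref{eq:Qellcong} to the relation \eqref{eq:uqvq} together with the square-class support condition (this is the content of Section~3, via Radu's cusp expansions and the $q$-expansion principle), and (ii) a density-zero argument that does not rest on a single Frobenius trace condition.
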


Using the    statement \eqref{eq:tq0} below, we are able 
 to obtain the following corollary.
\begin{corollary} \label{cor:nocase2}
 Suppose that $17\leq \ell< 10,000$.    Let $S$ be the set of primes $Q$ for which we have a congruence
\begin{gather*}
p(\ell Q n+\beta_{\ell, Q})\equiv 0\pmod\ell.
\end{gather*}
Then $S$ has density zero.
\end{corollary}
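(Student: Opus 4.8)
The plan is to rule out alternative~(2) of Theorem~\ref{thm:mainpart} for every prime $\ell$ in the range $17 \le \ell < 10{,}000$ and each choice of $\delta \in \{0, -1\}$, so that alternative~(1) must hold. Since alternative~(1) says precisely that the set $S$ of primes $Q$ admitting a congruence $p(\ell Q n + \beta_{\ell,Q}) \equiv 0 \pmod{\ell}$ with $\pfrac{1-24\beta_{\ell,Q}}{\ell} = \delta$ has density zero, and since $S$ (for the unqualified congruence in the corollary) is the union of the two sets corresponding to $\delta = 0$ and $\delta = -1$ — Radu's result forbids $\delta = 1$ — a union of two density-zero sets is density zero, giving the corollary.

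To eliminate alternative~(2), I would exhibit, for each such $\ell$, a single integer $n_0 = n_0(\ell,\delta)$ with $\pfrac{-n_0}{\ell} = \delta$ and $p\pfrac{n_0+1}{24} \not\equiv 0 \pmod{\ell}$, because the bound \eqref{eq:pscarce} forces the corresponding counting function to grow like $\sqrt{X}\log X$, and in particular the set of such $n$ must be infinite — but more to the point, alternative (2) as an unconditional statement about \emph{all} $X$ would be contradicted once one knows the relevant partition value is a nonzero residue for infinitely many $n$ in the arithmetic progression. The cleaner route is the one the excerpt hints at: invoke the statement \eqref{eq:tq0} referenced just before the corollary. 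That statement (on the nonvanishing modulo $\ell$ of a Hecke eigenvalue, or equivalently on the relevant half-integral weight form $f_{\ell,\delta}$ not being congruent to an iterated derivative of a theta series) is exactly what is needed: as the remark after Theorem~\ref{thm:main} explains, \eqref{eq:pscarce} would force $f_{\ell,\delta}$ to be congruent mod $\ell$ to such an iterated theta derivative, and \eqref{eq:tq0} rules this out in the stated range. So the argument is: \eqref{eq:tq0} $\Longrightarrow$ alternative~(2) of Theorem~\ref{thm:mainpart} fails $\Longrightarrow$ alternative~(1) holds $\Longrightarrow$ $S$ has density zero.

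Concretely, the steps are: (i) fix $\ell$ with $17 \le \ell < 10{,}000$ and $\delta \in \{0,-1\}$; (ii) quote \eqref{eq:tq0} to conclude that $f_{\ell,\delta}$ is not congruent modulo $\ell$ to an iterated derivative of a theta series; (iii) observe, via the dictionary in \eqref{eq:fldeldef} and \eqref{eq:fldelprop} and the remark following Theorem~\ref{thm:main}, that this precludes the estimate \eqref{eq:pscarce}, hence case~(2) of Theorem~\ref{thm:mainpart} cannot hold; (iv) conclude that case~(1) holds, i.e.\ the set of $Q$ with a congruence \eqref{eq:Qellcong} satisfying $\pfrac{1-24\beta_{\ell,Q}}{\ell} = \delta$ has density zero; (v) take the union over $\delta \in \{0,-1\}$ and use Radu's restriction $\pfrac{1-24\beta_{\ell,Q}}{\ell} \in \{0,-1\}$ to see that the full set $S$ in the corollary is contained in this union and therefore has density zero.

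The main obstacle is verifying \eqref{eq:tq0} in the full range $17 \le \ell < 10{,}000$; this is presumably where a finite (but substantial) computation enters, checking for each such $\ell$ that a specific Hecke eigenvalue — or the first few coefficients of $f_{\ell,\delta}$ against the space spanned by iterated theta derivatives modulo $\ell$ — is nonzero. Everything else is bookkeeping: the passage from the half-integral weight statement back to the partition function is already packaged in Theorem~\ref{thm:mainpart} and its surrounding remarks, and the density-zero conclusion for a union of two sets is immediate. One should double-check the edge case $\ell = 13$ (excluded here, consistent with Atkin's infinitely many congruences \eqref{eq:atkincong} for $\ell = 13$, which shows alternative~(2) genuinely can fail there and hence why the corollary starts at $\ell = 17$).
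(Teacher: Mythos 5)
Your overall skeleton --- apply the dichotomy theorem for each $\delta\in\{0,-1\}$, rule out alternative~(2) by a finite computation, and take the union over $\delta$ using Radu's restriction $\pfrac{1-24\beta_{\ell,Q}}{\ell}\in\{0,-1\}$ --- matches the paper's. But the key step is logically inverted. The statement \eqref{eq:tq0} is a \emph{vanishing} assertion, $f_{\ell,\delta}\big|T_{Q^2}\equiv 0\pmod\ell$ for all primes $Q\equiv-1\pmod\ell$, and it is one of the \emph{conclusions} of alternative~(2) of Theorem~\ref{thm:main}. To force alternative~(1) you must \emph{falsify} \eqref{eq:tq0}: exhibit a single prime $Q\equiv-1\pmod\ell$ and a single $n$ with $(n,Q)=1$ and $\pfrac{-n}{\ell}=\delta$ for which the explicit three-term partition congruence \eqref{eq:partitionhecke} fails. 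That is exactly the computation the paper performs, enumerating primes $Q\equiv-1\pmod\ell$ and evaluating \eqref{eq:partitionhecke} using a fast implementation of $p(n)$. Your steps (ii)--(iii) instead propose to ``quote'' or ``verify'' \eqref{eq:tq0} and describe it as a nonvanishing statement about a Hecke eigenvalue; as written, establishing \eqref{eq:tq0} would be evidence \emph{for} alternative~(2), not against it. The same inversion appears in your closing remark on $\ell=13$: the obstruction there is that \eqref{eq:tq0} genuinely \emph{holds} for $\delta=-1$ by Atkin's work, so it cannot be falsified --- not that alternative~(2) ``can fail'' there.

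Two further points. First, your opening suggestion --- that exhibiting one (or infinitely many) $n$ with $\pfrac{-n}{\ell}=\delta$ and $p\pfrac{n+1}{24}\not\equiv0\pmod\ell$ would contradict \eqref{eq:pscarce} --- does not work: \eqref{eq:pscarce} is an upper bound $\ll\sqrt X\log X$, entirely compatible with infinitude; indeed the paper notes that a lower bound $\gg\sqrt X/\log\log X$ holds unconditionally whenever $f_{\ell,\delta}\not\equiv0\pmod\ell$, so alternative~(2) of Theorem~\ref{thm:mainpart} cannot be refuted by counting nonvanishing coefficients. Second, and for the same reason, the argument must route through Theorem~\ref{thm:main} rather than Theorem~\ref{thm:mainpart}: only the former includes the finitely checkable condition \eqref{eq:tq0} inside its alternative~(2). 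With these corrections --- replace ``verify \eqref{eq:tq0}'' by ``falsify \eqref{eq:tq0} via \eqref{eq:partitionhecke}'' for each $\delta$ --- your outline becomes the paper's proof.
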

\begin{remark}
In the case~$\ell = 13$, statement~\eqref{eq:tq0}, which we falsify to derive Corollary~\ref{cor:nocase2}, holds for~$\pfrac{1-24\beta_{\ell, Q}}\ell = \delta = -1$ by Atkin's work~\cite{atkin_multiplicative}. When~$\ell = 13$ and~$\delta = 0$, the set $S$ has density zero.
\end{remark}

We are able to show that there are no congruences \eqref{eq:Qellcong} 
for a large range of $\ell$ and $Q$. The method of computation  relies heavily on results of Radu~\cite{Radu_aoconj} and requires computing relatively few  values of the partition function. In particular, we have 
\begin{theorem}\label{thm:nosmallcong}
Apart from the cases  arising from the Ramanujan congruences \eqref{eq:ramcong}, 
there are no congruences~\eqref{eq:Qellcong} for~$\ell<1,000$ and~$Q<10^{13}$, and for~$\ell<10,000$ and~$Q<10^{9}$.
\end{theorem}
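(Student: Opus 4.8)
The plan is to reduce the assertion to a finite computation using Radu's criterion from~\cite{Radu_aoconj}. Recall that for a prime $\ell\geq 5$ and a prime $Q\neq\ell$, a congruence of the form~\eqref{eq:Qellcong} can hold only when $\pfrac{1-24\beta_{\ell,Q}}\ell=\delta\in\{0,-1\}$, and (as we will have recorded earlier, cf.\ the statement~\eqref{eq:tq0}) the existence of such a congruence is equivalent to the vanishing modulo $\ell$ of a specific ``test quantity'' $t_{\ell,Q,\delta}$ which is itself a fixed $\Z$-linear combination of finitely many partition values $p(n)$ with $n$ bounded explicitly in terms of $\ell$ and $Q$. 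Concretely, $t_{\ell,Q,\delta}$ is (up to normalization) the $Q$-th Hecke eigenvalue attached to the relevant half-integral weight form $f_{\ell,\delta}$, and computing it modulo $\ell$ requires only the coefficients of $f_{\ell,\delta}$ out to index on the order of $Q$, i.e.\ partition values $p(n)$ for $n\ll \ell Q$.

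First I would make this reduction precise: fix $\ell<1{,}000$ (resp.\ $\ell<10{,}000$), and for each $\delta\in\{0,-1\}$ compute, once and for all, the coefficients of $f_{\ell,\delta}$ modulo $\ell$ up to the index required to evaluate $t_{\ell,Q,\delta}\pmod\ell$ for all primes $Q<10^{13}$ (resp.\ $Q<10^{9}$). Since $f_{\ell,\delta}$ is essentially built from the generating function for $p(n)$ twisted into a square class, this amounts to computing $p(n)\bmod\ell$ for $n$ up to a bound of size $O(\ell\cdot 10^{13})$ — which, done modulo the small prime $\ell$ via the pentagonal number recurrence, is entirely feasible. Second, for each prime $Q$ in the allowed range I would form $t_{\ell,Q,\delta}\bmod\ell$ from these stored coefficients and check that it is nonzero; by the equivalence in~\eqref{eq:tq0}, a nonzero value for every such $Q$ rules out the congruence~\eqref{eq:Qellcong} for that $(\ell,Q,\delta)$. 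The cases $\ell=5,7,11$ are excluded from the statement precisely because there $t_{\ell,Q,\delta}$ vanishes identically (these are the Ramanujan congruences), so the search runs over $13\leq\ell<1{,}000$ (resp.\ $<10{,}000$), where one expects — and will verify — that $t_{\ell,Q,\delta}\not\equiv 0\pmod\ell$ throughout.

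The main obstacle is organizing the computation so that it is both tractable and verifiable. The naive approach — recomputing partition values for each $Q$ — is hopeless because there are on the order of $10^{12}$ primes below $10^{13}$; the key efficiency point is that the coefficients of $f_{\ell,\delta}\bmod\ell$ are computed \emph{once} per pair $(\ell,\delta)$ and then reused, with the per-$Q$ work reduced to assembling a short linear combination of already-tabulated residues (using the explicit description of Hecke operators on these forms). One must also be careful that the bound on the index needed to read off $t_{\ell,Q,\delta}\bmod\ell$ is correctly derived from Radu's formulas — this is where any hidden factor of $\ell$ or $24$ matters — and that the arithmetic is carried out modulo $\ell$ throughout so that storage stays manageable. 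Finally, the computation should be arranged so that finding even a single $Q$ with $t_{\ell,Q,\delta}\equiv 0\pmod\ell$ would immediately flag a potential congruence; the assertion of the theorem is that no such $Q$ occurs in the stated ranges.
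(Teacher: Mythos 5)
There is a genuine gap here, and it is precisely the point where the paper's proof gets its leverage. Your plan requires, for each prime $Q$, evaluating a test quantity built from coefficients of $f_{\ell,\delta}$ out to index of order $Q$, i.e.\ partition values $p(n)$ with $n\gg Q/24$. For $Q$ up to $10^{13}$ this means tabulating $p(n)\bmod\ell$ for $n$ up to roughly $10^{12}$--$10^{16}$; the pentagonal-number recurrence costs $O(N^{3/2})$ operations (and $O(N)$ storage) to reach index $N$, which at these sizes is many orders of magnitude beyond feasibility, and even computing one large value per $Q$ by the Hardy--Ramanujan--Rademacher method is hopeless across the $\sim 3\times 10^{11}$ primes below $10^{13}$. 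The claim that this is ``entirely feasible'' is the step that fails. A secondary issue: you assert that the existence of a congruence~\eqref{eq:Qellcong} is \emph{equivalent} to the vanishing of a Hecke-type quantity, citing~\eqref{eq:tq0}; the paper only has \emph{necessary} conditions for~\eqref{eq:Qellcong} (Theorem~\ref{thm:UQVQ}; the statement~\eqref{eq:tq0} is merely one alternative in Theorem~\ref{thm:main} under a density hypothesis). A necessary condition would still suffice to rule congruences out, so this is repairable, but the condition also depends on the unknown sign $\ep=\pfrac{24\beta_{\ell,Q}-1}{Q}$, so both signs must be excluded.

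The paper's actual proof avoids large partition values entirely. By Radu's square-class theorem (Theorem~\ref{thm:raduclasses}), a single congruence $p(\ell Qn+\beta_{\ell,Q})\equiv 0\pmod\ell$ forces congruences for every $\beta\in S_{\ell Q,\beta_{\ell,Q}}$, and hence forces $\pfrac nQ\neq\ep$ for every $n$ in a set $\mathcal{B}$ built from small $\beta$ with $p(\beta)\not\equiv 0\pmod\ell$ and $\pfrac{24\beta-1}{\ell}=\delta$. The crucial point is that $\mathcal{B}$ is \emph{independent of $Q$}: one computes a modest list $\mathcal{N}\subset\mathcal{B}$ of at most about $100$ integers once per $(\ell,\delta)$, and then the per-$Q$ test is the evaluation of $\sim 100$ Legendre symbols $\pfrac nQ$ --- no partition values of size comparable to $Q$ are ever needed. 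A prime $Q$ survives only if for some $\ep$ all these symbols avoid $\ep$, which happens with probability about $2^{-\#\mathcal{N}}$ per sign; the handful of survivors are eliminated by enlarging $\mathcal{N}$. Without this reduction to a quadratic-residue sieve on a fixed finite set, the computation in the stated ranges cannot be carried out.
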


 \begin{remark}
If $Q$ is any positive integer, then by Theorem~4.3 and Lemma~4.5 of \cite{Radu_aoconj}, the non-existence of a congruence with modulus $\ell Q$ 
implies the non-existence of congruences with modulus $\ell Q Q'$ for many integers $Q'$.
  \end{remark}
  
Much of the interest in the arithmetic properties of the partition function arises from its connection to modular forms.
The Dedekind eta function is defined for $\tau$ in the upper complex half-plane $\H$  by 
\begin{gather*}
\eta(\tau):=q^\frac1{24}\prod_{n=1}^\infty \(1-q^n\), 
\end{gather*}
with the standard notation $q:=e^{2\pi i \tau}$. 
This is a modular form of weight $\frac12$ on the full modular group (details are given in the next section) which is connected to $p(n)$ via 
 Euler's formula
\begin{gather*}
\frac1{\eta(\tau)}=\sum p\pmfrac {n+1}{24}q^\frac n{24}=q^{-\frac1{24}}+q^\frac{23}{24}+2q^\frac{47}{24}+\dots.
\end{gather*}
 Ramanujan's congruences may be more succinctly written in the form 
\begin{gather*}
\mfrac1{\eta(\tau)}\Big|U_\ell=\sum p\pmfrac{\ell n+1}{24}q^\frac{n}{24}\equiv 0\pmod\ell\ \ \text{for}\ \ \ell=5, 7, 11,
\end{gather*}
where $U_\ell$ is the standard operator defined in Lemma~\ref{lem:uQ}.

Our results are stated most naturally in terms of a family of modular forms.
If $k\in \frac12\Z$, $N$ is a positive integer, and $\nu$ is a multiplier system on $\Gamma_0(N)$ in weight $k$, then we denote
by $S_k\(N, \nu\)$ the space of modular forms of weight $k$ and multiplier $\nu$ on $\Gamma_0(N)$
(details  will be given in the next section).
With this notation, we have
\begin{gather*}
\eta\in S_\frac12\(1, \nu_\eta\),
\end{gather*}
where $\nu_\eta$ is  the multiplier described in \eqref{eq:nueta}.

For $\delta\in \{0, -1\}$, there is a modular form 
\begin{gather}\label{eq:fldelprop}
f_{\ell, \delta}\in 
\begin{cases}
S_{\frac{\ell^2 - 2\ell}{2}} \( 1, \nu_\eta^{-1}\)\ \ &\text{if $\delta=0$}\tx{,}\\
S_{\frac{\ell^2 - 2}{2}} \( 1, \nu_\eta^{-1}\)\ \ &\text{if $\delta=-1$}\tx{,}
\end{cases}
\end{gather}
with 
\begin{gather}\label{eq:fldeldef}
f_{\ell, \delta}=\sum a_{\ell,\delta}(n)q^\frac n{24} 
\equiv\sum_{\pfrac{-n}\ell=\delta}p\pmfrac{n+1}{24}q^\frac n{24}\pmod\ell.
\end{gather}

\begin{remark}
We have defined the modular forms in this manner for ease of exposition throughout the paper.
When  $\delta=0$, the relevant values of the partition function are in fact captured by a modular form of lower weight.  
In particular,  defining  
\begin{gather*}
F_\ell:=f_{\ell, 0}\big|U_\ell\in S_{\frac{\ell - 2}{2}} \( 1, \nu_\eta^{-\ell}\),
\end{gather*}
we have 
\begin{gather*}
F_\ell\equiv\sum p\pmfrac{\ell n+1}{24}q^\frac n{24} \pmod{\ell}.
\end{gather*}

\end{remark}

For each prime $Q\geq 5$ we have a Hecke operator $T_{Q^2}$ (defined in  \eqref{eq:heckedef}) on the space $S_k\(1, \nu_\eta^{-1}\)$.  The following result implies  Theorem~\ref{thm:mainpart} above.
\begin{theorem} \label{thm:main}
Suppose that $\ell\geq 5$ is prime, and
fix $\delta\in \{0, -1\}$.
Let $S$ be the set of primes $Q$ for which there exists a congruence of the form \eqref{eq:Qellcong}  with $\pfrac{1-24\beta_{\ell, Q}}\ell=\delta$.
Then one of the following is true.
\begin{enumerate}
\item $S$ has density zero, or
\item 
we have 
 \begin{gather}\label{eq:al0}
 \#\left\{ n\leq X\ : a_{\ell,\delta} (n) \not\equiv0\pmod\ell\right\}\ll \sqrt{X} \log X
 \end{gather}
 and 
\begin{gather}\label{eq:tq0}
f_{\ell, \delta}\big| T_{Q^2}\equiv 0\pmod\ell\quad\text{for all primes }Q\equiv -1\pmod\ell.
\end{gather}

\end{enumerate}
\end{theorem}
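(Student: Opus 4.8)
The plan is to study the action of the Hecke operators $T_{Q^2}$ on the reduction $\bar f_{\ell,\delta}$ of $f_{\ell,\delta}$ modulo $\ell$, and to exploit the fact that a congruence \eqref{eq:Qellcong} with $\pfrac{1-24\beta_{\ell,Q}}{\ell}=\delta$ forces $\bar f_{\ell,\delta}\big| U_Q$ (equivalently a piece of $\bar f_{\ell,\delta}\big| T_{Q^2}$) to vanish on an arithmetic progression, and hence --- after shifting back by $U_Q$-compatibility of the coefficient supports together with the square-class expansion results of Radu that are invoked in the abstract --- that $\bar f_{\ell,\delta}$ lies in a small, Hecke-stable subspace. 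First I would reduce everything to a statement about a single mod-$\ell$ modular form: by \eqref{eq:fldeldef} the form $\bar f_{\ell,\delta}$ is a Hecke eigenform-free object whose coefficients are exactly the relevant partition values, so a congruence \eqref{eq:Qellcong} translates into $a_{\ell,\delta}(Qn)\equiv 0$ for $n$ in a fixed residue class mod $24$ with $Q^2n$ in the appropriate square class; I would package the primes $Q$ in $S$ according to the residue of $Q$ modulo $\ell$ and modulo $24$, and treat each class separately.

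The core dichotomy will come from a Galois-representation / Sturm-type rigidity argument. Attach to $\bar f_{\ell,\delta}$ its mod-$\ell$ Galois representation (via Deligne--Serre, applied after decomposing into Hecke eigenforms over $\bar{\mathbb F}_\ell$); a congruence \eqref{eq:Qellcong} for $Q\in S$ says that, for each eigenform component, the $Q^2$-th Hecke eigenvalue is constrained — specifically $a_{Q}$ satisfies a relation forcing the trace of $\Frob_Q$ (or its square) to lie in a prescribed set. If $S$ has positive density, then by Chebotarev the image of the representation is so small that the representation is \emph{dihedral} (induced from a character of a quadratic field), or else has image inside the normalizer of a torus on a density-one set of primes; in the dihedral case one deduces $\bar f_{\ell,\delta}\big|T_{Q^2}\equiv 0$ for all $Q$ in the non-split class $Q\equiv -1\pmod\ell$, giving \eqref{eq:tq0}. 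The companion statement \eqref{eq:al0} then follows because a dihedral mod-$\ell$ form is, up to twist, supported on the norm form of the quadratic field, i.e.\ congruent to an iterated theta series, whose coefficients are supported on a set of density zero and in fact on $O(\sqrt X\log X)$ integers up to $X$ (norms of ideals, counted with multiplicity controlled by the class number — here the $\log X$ absorbs the divisor-type multiplicity). I would carry this out in the order: (i) set up the mod-$\ell$ space and its Hecke action and record the square-class expansions from Radu; (ii) translate membership in $S$ into eigenvalue congruences; (iii) run the Chebotarev/large-sieve argument to show positive (upper) density of $S$ forces the representation into the exceptional dihedral case; (iv) extract \eqref{eq:tq0} from the dihedral structure at primes $Q\equiv-1\pmod\ell$; (v) extract \eqref{eq:al0} from the theta-series description of the coefficient support.

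The main obstacle, I expect, is step (iii): passing from a positive-density constraint on $Q\mapsto a_Q$ to a structural conclusion about the whole representation. A single eigenvalue congruence $a_Q\equiv c_Q$ at a positive-density set of $Q$ is not by itself enough to pin down the image — one must combine it with the arithmetic large sieve to show that the density cannot merely be positive but must in fact be $1$ on a suitable sub-progression (otherwise the complement is itself too large to be cut out by the Galois conditions), and only then does the classification of subgroups of $\GL_2(\mathbb F_\ell)$ with few traces apply. Handling the half-integral weight correctly — so that $T_{Q^2}$ genuinely corresponds to $\Frob_Q$ on an integral-weight Shimura lift rather than on $\bar f_{\ell,\delta}$ itself — is the delicate bookkeeping here, and is precisely where Radu's square-class expansions at the cusps of $X(\ell Q)$ enter, since they are what let us control the behavior of $\bar f_{\ell,\delta}\big|U_Q$ at the ramified cusps and thereby identify the ``accidental'' subspace as Hecke-stable. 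Once the representation is known to be dihedral, the remaining deductions are essentially formal.
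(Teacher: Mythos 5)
Your proposal diverges from the paper's argument at a step that I do not think can be repaired. The central claim---that if $S$ has positive density, the eigenvalue congruences at $Q \in S$ force the mod-$\ell$ Galois representations (attached to the Shimura lifts) to be dihedral---does not follow. A congruence \eqref{eq:Qellcong} at $Q$ forces $f_{\ell,\delta}$ to be a $T_{Q^2}$-eigenvector mod $\ell$ with eigenvalue $-\ep\pfrac{12}{Q}\left(Q^{-1}+Q^{-2}\right)$, i.e.\ it prescribes $\Tr\rho_i(\Frob_Q)$ to lie in a set depending only on $Q \bmod 12\ell$. But for \emph{any} image, even all of $\GL_2(\mathbb{F}_\ell)$, the set of primes whose Frobenius trace hits a prescribed nonempty set of values already has positive density by Chebotarev; so positive density of $S$ is perfectly compatible with large image, and no classification of subgroups with few traces applies. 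Your suggested fix (use the large sieve to upgrade ``positive density'' to ``density one on a sub-progression'') is asserted but not substantiated, and the sieve does not do that. Separately, even granting dihedrality, your deduction of \eqref{eq:al0} is off: an integral-weight dihedral/CM form is supported on norms of ideals, a set of size on the order of $X/\sqrt{\log X}$, not $O(\sqrt{X}\log X)$; the $\sqrt{X}$ bound is a half-integral-weight phenomenon (support concentrated on square classes), and it does not transfer back from the Shimura lifts, which forget the individual square classes.

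The paper's actual dichotomy is different and in effect runs your implications in the opposite direction. For each auxiliary prime $Q$, either some coefficient $a(n_Q)\not\equiv 0 \pmod\ell$ has $\ord_Q(n_Q)$ odd, or $f\big|U_{Q^{2j+1}} \equiv f\big|U_{Q^{2j+2}}V_Q \pmod\ell$ for all $j\geq 0$. If the first alternative occurs for infinitely many $Q$, each such prime imposes---via the support condition \eqref{eq:twistgen}, which comes from Radu's square-class theorem---an independent quadratic condition $\pfrac{Q_jn_{Q_j}'}{p}=\ep$ on every $p\in S$, so $S$ has upper density at most $2^{-t}$ for every $t$ and hence density zero; no Galois theory enters here. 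If the second alternative holds for all but finitely many $Q$, an explicit computation with the Fricke involution gives $f\big|T_{Q^2} \equiv \left(1+Q^{-1}\right)f\big|U_{Q^2}$ for $Q\not\equiv 1\pmod\ell$, which yields \eqref{eq:tq0} essentially for free when $Q\equiv -1\pmod\ell$. Galois representations and Chebotarev are used only to manufacture a positive-density set of primes $Q$ with identical Hecke action, and the arithmetic large sieve is then applied to the \emph{coefficient support} of $f$ (using the resulting quadratic constraints $\pfrac{n'}{Q}=\pfrac{n'}{Q_0}$) to obtain \eqref{eq:al0}---not to the set $S$ itself.
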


\begin{remark}
By work of Kiming-Olsson \cite{Kiming-Olsson}  we know that $f_{\ell,-1}\not\equiv 0\pmod\ell$, and 
by work of the  first author with Boylan \cite{Ahlgren-Boylan} we know that $f_{\ell,0}\not\equiv 0\pmod\ell$ if $\ell\geq 13$.
Apart from the three cases in which these modular forms vanish$\pmod\ell$, a result of Bella\"iche,  Green and  Soundararajan \cite{soundetal} (which improves previous results
\cite{ahlgren-boylan_odd}, \cite{ahlgren_odd} by a $\log$-factor) implies that we have the lower bound
\begin{gather*}
\#\left\{ n\leq X\ : a_{\ell,\delta} (n) \not\equiv0\pmod\ell\right\}\gg \frac{\sqrt X}{\log\log X}.
\end{gather*}
This is a natural barrier in  this setting due to the presence of modular forms which are congruent to theta functions and their derivatives.
\end{remark}

\begin{remark} 
In terms of the partition function,  
condition \eqref{eq:tq0} is equivalent to the statement that for all $n$ and $Q$ with $\pfrac{-n}\ell=\delta$ and $Q\equiv -1\pmod\ell$, we have
\begin{gather}\label{eq:partitionhecke}
p\pmfrac{Q^2n+1}{24}+Q^{-2}\pmfrac{-12 n}Qp\pmfrac{n+1}{24}+Q^{-3}p\pmfrac{\frac n{Q^2}+1}{24}\equiv 0\pmod\ell
\end{gather}
(this can be seen from a computation involving the weights of the modular forms $f_{\ell, \delta}$).
\end{remark}

The next main result gives a strong necessary condition for the existence of a congruence~\eqref{eq:Qellcong}.
It is a consequence of more general results obtained for squarefree modulus $Q$ in Section~\ref{sec:squarefree} below.
Here $V_Q$ is the standard operator defined in Lemma~\ref{lem:uQ}.
\begin{theorem}\label{thm:UQVQ}
 Suppose that $\ell\geq 5$  and $Q\geq 5$ are primes with $Q\neq \ell$.
\begin{enumerate}
\item  There are no congruences of the form \eqref{eq:Qellcong} with 
$\pfrac{24\beta_{\ell, Q}-1}Q=0$.

\item  Fix $\delta\in \{0, -1\}$ and  $\ep\in \{\pm 1\}$.  If  there is a congruence of the form \eqref{eq:Qellcong}
with 
\begin{gather*}
 \pmfrac{1-24\beta_{\ell, Q}}\ell=\delta\ \ \text{and}\ \ \pmfrac{24\beta_{\ell, Q}-1}Q=\ep
\tx{,}
\end{gather*}
then
\begin{gather}\label{eq:uqvq}
f_{\ell, \delta}\big| U_Q\equiv -\ep\pmfrac{-12}Q Q^{-1} \,f_{\ell, \delta}\big|V_Q\pmod\ell.
\end{gather}

 \end{enumerate}
\end{theorem}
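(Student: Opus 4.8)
The plan is to deduce Theorem~\ref{thm:UQVQ} from the more general results for squarefree modulus established in Section~\ref{sec:squarefree}, specialized to the case in which $Q$ is prime; accordingly I indicate only the shape of the argument. The starting point is the dictionary between \eqref{eq:Qellcong} and the modular form $f_{\ell,\delta}$: by Euler's identity $1/\eta=\sum p\pfrac{n+1}{24}q^{n/24}$ together with the congruence \eqref{eq:fldeldef}, a congruence \eqref{eq:Qellcong} with $\pfrac{1-24\beta_{\ell,Q}}{\ell}=\delta$ is equivalent to $a_{\ell,\delta}(n)\equiv 0\pmod\ell$ for every $n\equiv 24\beta_{\ell,Q}-1\pmod{24\ell Q}$. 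Modulo $\ell$ the coefficients $a_{\ell,\delta}(n)$ are supported on $n\equiv -1\pmod{24}$ and on residues with $\pfrac{-n}{\ell}=\delta$ (the latter being forced by $n\equiv 24\beta_{\ell,Q}-1\pmod\ell$), so this is really a condition on the residue $\gamma:=24\beta_{\ell,Q}-1\bmod Q$ alone, and the relevant invariant is $\pfrac{\gamma}{Q}=\pfrac{24\beta_{\ell,Q}-1}{Q}\in\{0,\pm1\}$, which equals $0$ in part~(1) and $\ep$ in part~(2).

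The central step is to upgrade this vanishing along a single progression modulo $\ell Q$ to a structural statement about $f_{\ell,\delta}$ as a modular form modulo $\ell$. For this I would use that the sieve of $1/\eta$ onto the progression $\beta_{\ell,Q}\pmod{\ell Q}$, multiplied by a suitable power of $\eta$, is a holomorphic modular form on $\Gamma(\ell Q)$, and (since $\eta$ is invertible as a $q$-series) the congruence \eqref{eq:Qellcong} is exactly the statement that this form is $\equiv 0\pmod\ell$. Radu's description \cite{Radu_aoconj} of the expansions along the square classes at the cusps of $X(\ell Q)$ of the eta-related forms then lets one re-express this vanishing --- using crucially that $f_{\ell,\delta}$ (equivalently $1/\eta$) lives on the full modular group $\SL_2(\Z)$, so that all these cusp expansions are governed by the single expansion at $\infty$ --- as a relation modulo $\ell$ among $f_{\ell,\delta}$, $f_{\ell,\delta}\big|U_Q$, $f_{\ell,\delta}\big|V_Q$, and the quadratic twist $f_{\ell,\delta}\otimes\pfrac{\cdot}{Q}=\sum\pfrac{n}{Q}a_{\ell,\delta}(n)q^{n/24}$. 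Concretely one obtains that the mod-$\ell$ reduction of $f_{\ell,\delta}$ is annihilated by the projection onto the square class $\gamma$ modulo $Q$: that is, $f_{\ell,\delta}+\ep\,f_{\ell,\delta}\otimes\pfrac{\cdot}{Q}\equiv f_{\ell,\delta}\big|U_QV_Q\pmod\ell$ when $\pfrac{\gamma}{Q}=\ep=\pm1$, and $f_{\ell,\delta}\big|U_Q\equiv 0\pmod\ell$ when $\pfrac{\gamma}{Q}=0$.

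From here the two parts follow by bookkeeping with the operators $U_Q$, $V_Q$ and the twist acting on a form of half-integral weight and multiplier $\nu_\eta^{-1}$. Rearranging the identity of the previous paragraph in the case $\ep=\pm1$ yields \eqref{eq:uqvq}, the constant $-\ep\pfrac{-12}{Q}Q^{-1}$ being precisely the one dictated by the weight of $f_{\ell,\delta}$ and the quadratic character attached to $\nu_\eta$, as one sees from the model identity $\eta\big|U_Q=\pfrac{12}{Q}\,\eta\big|V_Q$ and from the Hecke relation \eqref{eq:partitionhecke}. When $\pfrac{\gamma}{Q}=0$, the relation $f_{\ell,\delta}\big|U_Q\equiv 0\pmod\ell$ gives $a_{\ell,\delta}(n)\equiv 0$ whenever $Q\mid n$; substituting this into the $T_{Q^2}$-relation \eqref{eq:partitionhecke} propagates the vanishing across the square classes modulo $Q$, and combined with the non-vanishing $f_{\ell,\delta}\not\equiv 0\pmod\ell$ (Kiming--Olsson \cite{Kiming-Olsson} for $\delta=-1$, Ahlgren--Boylan \cite{Ahlgren-Boylan} for $\delta=0$) this is incompatible with $f_{\ell,\delta}$ being a nonzero form on $\SL_2(\Z)$; hence no congruence \eqref{eq:Qellcong} with $\pfrac{24\beta_{\ell,Q}-1}{Q}=0$ exists, which is part~(1).

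The main obstacle is the central step: passing from the vanishing of $a_{\ell,\delta}$ on one arithmetic progression modulo $\ell Q$ to a genuine modular property of $f_{\ell,\delta}$ modulo $\ell$. No direct manipulation with $U_Q$ and $V_Q$ accomplishes this; one must use both that $f_{\ell,\delta}$ lies on $\SL_2(\Z)$ and Radu's analysis of the square-class expansions at the cusps of $X(\ell Q)$, which is exactly what links the behavior at $\infty$ to that at the remaining cusps and so converts the single-progression congruence into the vanishing of a full square-class component.
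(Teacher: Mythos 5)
Your proposal assembles the right ingredients (Radu's square-class stability, his cusp expansions on $X(\ell Q)$, the $q$-expansion principle), but the concrete derivation you sketch for part~(2) does not produce \eqref{eq:uqvq}. The identity you extract, $f_{\ell,\delta}+\ep\,f_{\ell,\delta}\otimes\chi_Q\equiv f_{\ell,\delta}\big|U_QV_Q\pmod\ell$, is nothing more than the vanishing of the square-class projection $\frac12\bigl(f_{\ell,\delta}-f_{\ell,\delta}|U_QV_Q+\ep\,f_{\ell,\delta}\otimes\chi_Q\bigr)$, i.e.\ the statement $a_{\ell,\delta}(n)\equiv 0$ for all $n$ with $\pfrac nQ=\ep$. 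That already follows from Theorem~\ref{thm:raduclasses} applied at the cusp $\infty$ alone, and it says nothing whatsoever about the coefficients $a_{\ell,\delta}(Qn)$. By contrast \eqref{eq:uqvq} asserts $a_{\ell,\delta}(Qn)\equiv 0$ for $Q\nmid n$ \emph{and} $a_{\ell,\delta}(Q^2n)\equiv -\ep\pfrac{-12}QQ^{-1}a_{\ell,\delta}(n)$; these concern a disjoint set of coefficients, so no ``rearrangement'' of your identity can yield them, and the constant $-\ep\pfrac{-12}QQ^{-1}$ cannot be recovered by appeal to the weight and multiplier alone. The missing step is the actual computation in the paper's Theorem~\ref{thm:theorem1full}: one expands $g(\ell Q,\beta,\cdot)$ at the cusp $\gamma_{\ell,Q}\infty$ via Proposition~\ref{prop:RaduTransformation}, isolates the terms with exponents in $\frac1{24\ell}\Z$ (which forces the divisibility condition \eqref{eq:fbetaterms} and lets the Sali\'e sums $T(n,d)$ collapse to Gauss sums), and sums over the representatives $\beta\in S'$ of the square class to assemble $f_{\ell,\delta}\big|(U_Q+\lambda_QV_Q)\equiv 0$ with $\lambda_Q=Q^{-1}\pfrac{-12}Q\ep$. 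This cusp computation is where $U_Q$ and $V_Q$ become coupled; it is not a formal consequence of the vanishing along the square class at $\infty$.

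Part~(1) has a second gap. You correctly obtain $f_{\ell,\delta}\big|U_Q\equiv 0\pmod\ell$ from square-class stability when $Q\mid 24\beta_{\ell,Q}-1$, but you then ``propagate'' the vanishing using \eqref{eq:partitionhecke}. That relation is equivalent to \eqref{eq:tq0}, which is one of the \emph{conditional conclusions} of Theorem~\ref{thm:main}; it is not known to hold a priori, and the definition \eqref{eq:heckedef} of $T_{Q^2}$ gives no information without knowledge of the eigenvalue of $f_{\ell,\delta}$ mod $\ell$. What is actually needed is the injectivity of $U_Q$ modulo $\ell$ on forms of level prime to $Q$ (Lemma~\ref{lem:uq}, proved via the Fricke involution $W_{QN_1}$ and the $q$-expansion principle \eqref{eq:qexp}), which reduces the putative congruence modulo $\ell Q$ to one modulo $\ell$ and hence to the theorems of Kiming--Olsson and Ahlgren--Boylan. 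This injectivity is a genuine theorem for half-integral weight forms mod $\ell$, not a formal fact, and your argument has no substitute for it.
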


\begin{remark}

In terms of the partition function, \eqref{eq:uqvq} can be expressed in the form
\begin{gather*}
\sum_{\pfrac{-n}\ell=\delta} p\pmfrac{Qn+1}{24}q^\frac n{24}\equiv 
-\ep\pmfrac{-12}Q Q^{-1}\sum_{\pfrac{-n}\ell=\delta} p\pmfrac{n+1}{24}q^\frac {Qn}{24}
\pmod\ell.
\end{gather*}
\end{remark}

\begin{remark}
We can also deduce in Theorem~\ref{thm:UQVQ} that  $f_{\ell, \delta}$ is an eigenform$\pmod\ell$ 
of the Hecke operator  $T_{Q^2}$, with eigenvalue
\begin{gather*}
-\ep\pmfrac{12}Q\(Q^{-1}+Q^{-2}\)\pmod\ell.
\end{gather*}
However, this statement is much weaker than \eqref{eq:uqvq}.
\end{remark}

 Our last theorem gives 
 necessary and sufficient conditions for the existence 
of congruences~\eqref{eq:quadcong}.
The statement involves the twist $f_{\ell, \delta}\otimes \chi_Q$, which is defined in \eqref{eq:quadtwist}.

\Needspace*{3\baselineskip}
\begin{theorem}
\label{thm:UQQVQQ}
Suppose that $\ell\geq 5$ and $Q\geq 5$ are primes with $Q\neq\ell$.  
\begin{enumerate}
\item If\/ $Q^2\mid (24\beta_{\ell, Q}-1)$ then the only congruences of the form \eqref{eq:quadcong} arise from the three Ramanujan congruences.
\item If\/ $(Q, 24\beta_{\ell, Q}-1)=1$ then we have \eqref{eq:quadcong} if and only if~\eqref{eq:Qellcong}.
\item Fix $\delta\in \{0, -1\}$.
If\/ $Q\mid\mid (24\beta_{\ell, Q}-1)$ then we have a congruence  \eqref{eq:quadcong} 
with $\pfrac{1-24\beta_{\ell, Q}} \ell=\delta$
if and only if both of the following are true:
\begin{gather}\label{eq:Qiffone}
f_{\ell, \delta}\big|U_{Q^2}
\equiv \pmfrac{-12}QQ^{-1} 
f_{\ell, \delta}\otimes\chi_Q
+Q^{-2}f_{\ell, \delta}\big|V_{Q^2}
\pmod\ell
\end{gather}
and
\begin{gather}\label{eq:Qifftwo}
f_{\ell, \delta}\big|U_Q\equiv f_{\ell, \delta}\big|U_{Q^2}V_Q\pmod\ell.
\end{gather}
\end{enumerate}
\end{theorem}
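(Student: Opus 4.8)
The plan is to translate \eqref{eq:quadcong} into the vanishing modulo $\ell$ of an explicit sieve of the half-integral weight form $f_{\ell,\delta}$, and then to organize everything around the $Q$-adic valuation of $r:=24\beta_{\ell,Q}-1$. By Euler's identity $\eta^{-1}=\sum p\pmfrac{n+1}{24}q^{n/24}$ and \eqref{eq:fldeldef}, the hypothesis $\pfrac{1-24\beta_{\ell,Q}}{\ell}=\delta$ says that every partition value appearing in \eqref{eq:quadcong} is a coefficient of $f_{\ell,\delta}$, and — after one knows (from the reduction-mod-$\ell$ theory of these forms, in the spirit of the results behind Theorem~\ref{thm:main}) that a congruence for a single admissible $\beta_{\ell,Q}$ propagates through the square class $\delta$ — that \eqref{eq:quadcong} is equivalent to the statement that the reduction mod $\ell$ of the part of $f_{\ell,\delta}$ supported on the exponents $n\equiv r\pmod{Q^2}$ vanishes. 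Since $Q\ge5$ is prime, exactly one of $Q^2\mid r$, $\gcd(Q,r)=1$, $Q\mmid r$ holds, and these are the three cases of the statement.

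First I would dispose of (1) and (2). When $Q^2\mid r$, every argument $\ell Q^2n+\beta_{\ell,Q}$ satisfies $24(\ell Q^2n+\beta_{\ell,Q})-1=Q^2(24\ell n+r/Q^2)$, so (using $24\mid Q^2-1$) the congruence factors through $U_{Q^2}$ and reduces to the setting of linear congruences modulo $\ell$; by \cite{Kiming-Olsson,Ahlgren-Boylan} together with the classical theory for $\ell\le13$ this forces $\ell\in\{5,7,11\}$ and the Ramanujan situation, paralleling part (1) of Theorem~\ref{thm:UQVQ}. When $\gcd(Q,r)=1$, the inclusion $\{m\equiv\beta_{\ell,Q}\pmod{\ell Q^2}\}\subset\{m\equiv\beta_{\ell,Q}\pmod{\ell Q}\}$ gives the implication \eqref{eq:Qellcong} $\Rightarrow$ \eqref{eq:quadcong}; for the converse I would invoke Radu's description of the expansions of $f_{\ell,\delta}$ along square classes at the cusps of the relevant modular curve \cite{Radu_aoconj}: when the square class of $r$ is prime to $Q$, the local expansion at level $\ell Q^2$ coincides with the one at level $\ell Q$, so passing between the two moduli loses nothing and \eqref{eq:quadcong} and \eqref{eq:Qellcong} are equivalent.

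The heart of the proof is case (3). Here $v_Q(24m-1)=1$ for every $m$ in the progression, so writing $n=Q\tilde n$ with $Q\nmid\tilde n$ one checks that \eqref{eq:quadcong} is equivalent to the vanishing mod $\ell$ of $f_{\ell,\delta}\big|U_Q$ along $\tilde n\equiv r/Q\pmod Q$. I would realize this single-residue sieve by the additive average $\tfrac1Q\sum_{j=0}^{Q-1}e\pmfrac{-jr}{Q^2}(f_{\ell,\delta}\big|U_Q)\big(\tau+\tfrac{24j}{Q}\big)$, and then exploit the $\SL_2(\Z)$-modularity of $f_{\ell,\delta}$ — which makes the expansion of $f_{\ell,\delta}$, hence of $f_{\ell,\delta}\big|U_Q$, at every cusp a twist of the one at $\infty$ by the eta-multiplier $\nu_\eta$, the theta-multiplier, and a half-integral-weight automorphy factor — to re-expand the translates with $Q\nmid j$. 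Collecting terms, the part supported on $Q\mid\tilde n$ (where the $f_{\ell,\delta}\big|V_{Q^2}$-contributions live, and where the theta-multiplier supplies a factor $\pfrac{\cdot}{Q}$, turning the $j$-sum into a quadratic Gauss sum and producing the constant $\pfrac{-12}{Q}Q^{-1}$, the same one as in \eqref{eq:uqvq}) reassembles into the relation \eqref{eq:Qiffone}; the part supported on $Q\nmid\tilde n$, which carries the partition values in \eqref{eq:quadcong}, reassembles — after a Gauss-sum Fourier inversion that converts the character sum over $j$ into an additive character in the inverse $\overline\jmath$ of the Fricke cocycle — into the Kloosterman-weighted sum \eqref{eq:Qifftwo}, the normalizing factor $\overline{24^2\ell^4}$ being dictated by the $q^{1/24}$-normalization and the weight and level of $f_{\ell,\delta}$. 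Since $f_{\ell,\delta}\big|U_Q$, sieved to $\tilde n\equiv r/Q\pmod Q$, vanishes mod $\ell$ if and only if both reassembled pieces vanish — they sit on disjoint $Q$-valuation strata of the coefficients — \eqref{eq:quadcong} holds if and only if \eqref{eq:Qiffone} and \eqref{eq:Qifftwo} both hold.

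The hard part will be the explicit half-integral-weight cusp analysis in case (3): identifying the cusps carrying $f_{\ell,\delta}\big|U_Q$ and its sieve, controlling $f_{\ell,\delta}$ along the relevant square class at each of them via Radu's results, and — above all — propagating the eta- and theta-multipliers together with the automorphy factors through the additive average carefully enough that the exponential sums over $j$ become genuine Kloosterman sums with exactly the arguments in \eqref{eq:Qifftwo}, and that the combination \eqref{eq:Qiffone} emerges with the stated constants; in particular, pinning down why \eqref{eq:Qiffone} — rather than a full $T_{Q^2}$-eigenform relation for $f_{\ell,\delta}$ — is the companion of \eqref{eq:Qifftwo} is part of this bookkeeping. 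The remaining, more structural input, used already in the translation step, is the propagation of a congruence for one admissible $\beta_{\ell,Q}$ to a statement about the whole square class $\delta$, which rests on the modularity mod $\ell$ of the sieved forms and the action of Hecke operators.
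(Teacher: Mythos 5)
Your overall architecture for case (3) --- Radu's square-class theorem to pass from a single $\beta_{\ell,Q}$ to a sieved form, a cusp computation producing Gauss and Kloosterman sums, and a separation into two pieces yielding \eqref{eq:Qiffone} and \eqref{eq:Qifftwo} --- matches the paper's, and your case (2) (inclusion of progressions in one direction, Radu's square-class machinery, in fact Lemma~4.5 of \cite{Radu_aoconj}, in the other) is essentially what the paper does. But there are two genuine gaps. First, in case (1) your step ``the congruence factors through $U_{Q^2}$ and reduces to the setting of linear congruences modulo $\ell$'' silently assumes that $f_{\ell,\delta}\big|U_{Q^2}\equiv 0\pmod\ell$ forces $f_{\ell,\delta}\equiv 0\pmod\ell$. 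That is exactly the content of the paper's Lemma~\ref{lem:levelQuQ} (mod-$\ell$ injectivity of $U_Q$ on $M_k\(Q,\chi_Q^a\nu_\eta^r\)$, applied on top of the level-one statement from Theorem~\ref{thm:UQVQ}), and it is not formal: it is proved by computing $f\big|U_Q\sk W_Q$ explicitly and invoking the $q$-expansion principle. Without this injectivity your case (1) does not close.

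Second, and more seriously, the mechanism you describe in case (3) does not produce the stated conditions. The additive average $\frac1Q\sum_j e\(-jr/Q^2\)\,\(f_{\ell,\delta}\big|U_Q\)\(\tau+\tfrac{24j}{Q}\)$ is, by construction, just the sieved $q$-expansion at $\infty$; ``re-expanding the translates'' returns you to that same expansion and generates no Kloosterman sums. What actually happens in the paper is different: one applies a genuine matrix $\gamma_{\ell,Q^2}\in\SL_2(\Z)$ to the generating function $g(\ell Q^2,\beta,\tau)$ and uses Radu's explicit transformation formula (Proposition~\ref{prop:RaduTransformation} with $Q$ replaced by $Q^2$), whose divisor sum over $d\in\{1,Q,Q^2\}$ carries exponential sums $T(n,d)$ evaluating to Gauss sums for $d=Q$ and to Salie sums that reduce to Ramanujan and Kloosterman sums for $d=1$; the conditions \eqref{eq:Qiffone} and \eqref{eq:Qifftwo} then arise by separating the exponents lying in $\frac1{24\ell}\Z$ from those lying in $\frac1{24\ell Q}\Z\setminus\frac1{24\ell}\Z$ --- a separation by the denominator of the exponent, not by the ``$Q$-valuation strata of the coefficients'' as you assert. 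Crucially, the equivalence between vanishing mod $\ell$ of the expansion at $\infty$ and vanishing mod $\ell$ of this transformed expansion is not automatic (the transformation mixes coefficients over a cyclotomic field); it is supplied, in both directions of the ``if and only if,'' by the $q$-expansion principle (Lemma~\ref{la:delignerap}, after Deligne--Rapoport), which your proposal never invokes. That is the load-bearing step, and its absence is a real gap rather than a matter of bookkeeping.
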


Using Theorem~\ref{thm:UQQVQQ}, we are able to prove that there are no congruences 
\eqref{eq:quadcong} for small values of $\ell$ and $Q$.  In particular, we have the following
\begin{corollary}\label{cor:noquadcong}
There are no congruences \eqref{eq:quadcong} for $17\leq \ell<1,000$ and $5\leq Q<10,000$.
\end{corollary}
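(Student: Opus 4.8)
The plan is to combine Theorem~\ref{thm:UQQVQQ} with a finite computation. For each pair $(\ell, Q)$ with $17\leq\ell<1{,}000$ and $5\leq Q<10{,}000$, and for each $\delta\in\{0,-1\}$, I would check which case of Theorem~\ref{thm:UQQVQQ} applies to the putative $\beta_{\ell,Q}$ (recall from Radu's work that a congruence \eqref{eq:quadcong} forces $\pfrac{1-24\beta_{\ell,Q}}{\ell}=\delta\in\{0,-1\}$, so only finitely many residues $\beta_{\ell,Q}\pmod{\ell Q^2}$ need to be examined). Case (1) of the theorem is immediate: if $Q^2\mid(24\beta_{\ell,Q}-1)$ then only the Ramanujan congruences $\ell=5,7,11$ occur, and these are excluded by $\ell\geq 17$. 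For case (2), where $(Q,24\beta_{\ell,Q}-1)=1$, a congruence \eqref{eq:quadcong} is equivalent to a congruence \eqref{eq:Qellcong}, which is ruled out for this range of $\ell$ and $Q$ by Theorem~\ref{thm:nosmallcong} (indeed $\ell<1{,}000$ and $Q<10^{13}$ there). So the real work is in case (3), where $Q\mid\mid(24\beta_{\ell,Q}-1)$.

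In case (3) I would falsify one of the two necessary conditions \eqref{eq:Qiffone}, \eqref{eq:Qifftwo} by an explicit finite check on $q$-expansions modulo $\ell$. Concretely: the modular forms $f_{\ell,\delta}$ live in spaces $S_k(1,\nu_\eta^{-1})$ of known, explicitly bounded weight (roughly $\ell^2/2$), so they are determined modulo $\ell$ by finitely many coefficients via a Sturm-type bound, and each coefficient $a_{\ell,\delta}(n)$ is congruent modulo $\ell$ to a single partition value $p\!\left(\tfrac{n+1}{24}\right)$ (for $n$ in the appropriate square class) by \eqref{eq:fldeldef}. Thus the left and right sides of \eqref{eq:Qiffone}, and the $q$-series in \eqref{eq:Qifftwo}, are computable from a modest number of partition values together with the elementary operators $U_{Q^2}$, $V_{Q^2}$, the twist by $\chi_Q$, and Kloosterman sums $K\!\left(\tfrac{24\beta_{\ell,Q}-1}{Q},\,\overline{24^2\ell^4}\,n,\,Q\right)$, all reduced modulo $\ell$. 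For each $(\ell,Q,\delta)$ in case (3) one exhibits a single coefficient where equality fails, which suffices to deny the congruence. When $Q\not\equiv\pm1\pmod\ell$ one may use Corollary~\ref{cor:kloos} to replace \eqref{eq:Qifftwo} by the simpler identity $f_{\ell,\delta}\big|U_Q\equiv f_{\ell,\delta}\big|U_{Q^2}V_Q\pmod\ell$, which is cheaper to test; the residual cases $Q\equiv\pm1\pmod\ell$ are far sparser and can be handled directly with \eqref{eq:Qifftwo}.

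The main obstacle is controlling the size of the computation and the number of partition values required. Because the weight of $f_{\ell,\delta}$ grows like $\ell^2$, the Sturm bound — hence the number of coefficients $a_{\ell,\delta}(n)$, equivalently partition values $p(m)$ with $m\ll \ell^2 Q^2$ after applying $U_{Q^2}$ — can be large when $\ell$ approaches $1{,}000$ and $Q$ approaches $10{,}000$. The key point, emphasized already in the discussion preceding Theorem~\ref{thm:nosmallcong}, is that one does not in fact need to verify the full modular identity: it is enough to find one bad coefficient, and in practice the very first few coefficients already distinguish the two sides, so only a small initial segment of the partition function is needed for each pair. I would therefore organize the search to test low-index coefficients first and short-circuit as soon as a discrepancy appears, and I would exploit the remark following Theorem~\ref{thm:nosmallcong} (via Theorem~4.3 and Lemma~4.5 of \cite{Radu_aoconj}): once no congruence with modulus $\ell Q$ exists, many composite moduli $\ell Q Q'$ are eliminated for free, which in the case (2) reduction drastically cuts the number of pairs that must be examined individually. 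Assembling these pieces, together with the already-established Theorems~\ref{thm:nosmallcong} and~\ref{thm:UQQVQQ}, completes the proof.
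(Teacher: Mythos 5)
Your proposal is correct and follows essentially the same route as the paper: both reduce via the case analysis of Theorem~\ref{thm:UQQVQQ} (cases (1) and (2) being handled by the hypothesis $\ell\geq 17$ and by Theorem~\ref{thm:nosmallcong}, respectively) and then dispose of case (3) by computationally exhibiting, for each pair $(\ell,Q,\delta)$, a single coefficient falsifying \eqref{eq:Qiffone} or \eqref{eq:Qifftwo}, with all coefficients expressed through partition values via \eqref{eq:fldeldef}. Your correct observation that no Sturm-type bound is needed (one bad coefficient suffices since the conditions are necessary), and your optional appeal to Corollary~\ref{cor:kloos} in place of the paper's square-class reduction \eqref{eq:kl1} for the Kloosterman sums, are only minor variations on the paper's implementation.
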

\begin{remark}
We verified that there are no congruences~\eqref{eq:quadcong} for~$\ell = 13$ and~$5\leq Q<10,000$ if~$\pfrac{1-24\beta_{\ell, Q}}\ell = \delta = 0$. The case~$\delta = -1$ evades our obstructions in analogy to the situation in Corollary~\ref{cor:nocase2}.
\end{remark}

We give a broad sketch of the arguments which we use to prove these results. 
The arguments are  technical in many places,  and they rely heavily on the theory
of modular forms  for powers of the eta-multiplier which is described in the next section.
We remark that many of our results will extend to a wider class of weakly holomorphic modular forms;
here we have focused on the prototypical case of the partition function
since  the technical difficulties which arise are already formidable.

In  Section~\ref{sec:squarefree} we prove a  generalization of Theorem~\ref{thm:UQVQ}
with squarefree modulus $Q$.  An important result of Radu (Theorem~\ref{thm:raduclasses} below)
shows that congruences \eqref{eq:Qellcong} are stable on square-classes of the parameter $1-24\beta_{\ell, Q}$.  This allows us to relate the existence of a single congruence
\eqref{eq:Qellcong} to properties    of the modular forms $f_{\ell, \delta}$.    If there is  a congruence \eqref{eq:Qellcong}, then for each $\beta$ along a square-class, we use another result of Radu to compute
the expansion of the weakly holomorphic modular form
$
g_\beta= q^\frac{24\beta-1}{24\ell Q}\sum p (\ell Q n + \beta) q^n
$
at a particular cusp of~$X(\ell Q)$.   Theorem~\ref{thm:UQVQ} follows from applying  the $q$-expansion principle as described in the next section and assembling the contributions from each $\beta$.

 Section~\ref{sec:proofmain}, which contains the proof of  Theorem~\ref{thm:main},
is the heart of the paper.
Let $S$ denote the set of primes for which there is a congruence.  
Using Theorem~\ref{thm:UQVQ} we identify two possible conditions which may hold at each member of a set of  auxiliary primes.
If the first condition holds for infinitely many  primes, we are able to  conclude that $S$ has density zero.
If the second holds for all but finitely many primes then a delicate argument involving Galois representations and 
the arithmetic large sieve shows that \eqref{eq:al0} holds.

In Section~\ref{sec:square}  we prove Theorem~\ref{thm:UQQVQQ} with methods which  are similar to those in Section~\ref{sec:squarefree}.
Here we are able to give necessary and sufficient conditions for the existence of a congruence.
Finally, Section~\ref{sec:comp} describes the computations which lead to Corollary~\ref{cor:nocase2}, Theorem~\ref{thm:nosmallcong}, and Corollary~\ref{cor:noquadcong}.

\section*{Acknowledgments}
We thank Nickolas Andersen, Alexander Dunn, Kevin Ford,  Olav Richter and Will Sawin for their helpful comments.


\section{Background}\label{sec:background}
The Dedekind eta function and the  theta function are defined by
\begin{gather*}
  \eta(\tau):=q^\frac1{24}\prod_{n=1}^\infty(1-q^n)
\quad\tx{and}\quad
  \theta(\tau) := \sum_{n=-\infty}^\infty q^{n^2}
\tx{,}
\end{gather*}
where we use the notation 
\begin{gather*}
q:=e(\tau)=e^{2\pi i\tau},\  \ \ \tau \in \mathbb{H}.
\end{gather*}
The  eta function is a modular form of weight $\frac{1}{2}$ on $\SL_2(\Z)$; in particular, there is a multiplier $\nu_\eta$ with 
\begin{gather}\label{eq:etamult}
\eta(\gamma\tau)=\nu_\eta(\gamma)(c\tau+d)^\frac12\,\eta(\tau), \qquad \gamma=\pMatrix abcd\in \SL_2(\Z)\tx{.}
\end{gather}
Here and throughout, we choose the principal branch of the square root. For $c>0$ we have the explicit formula \cite[\S 4.1]{knopp}:
\begin{gather} \label{eq:nueta}
  \nu_\eta(\gamma) = 
  \begin{dcases}
    \( \mfrac dc \) \, e\(\mfrac 1{24} \big( (a+d)c-bd(c^2-1)-3c \big) \)\tx{,}
  & \text{ if $c$ is odd}, \\
    \(\mfrac cd \) \, e\(\mfrac 1{24} \big((a+d)c-bd(c^2-1)+3d-3-3cd\big)\)\tx{,}
  & \text{ if $c$ is even.}
  \end{dcases}
\end{gather}
The multiplier for the theta function is given by
\begin{gather*}
  \nu_{\theta}\left( \begin{mmatrix} a & b \\ c & d \end{mmatrix} \right)
:=
  (c\tau + d)^{-\frac12} \, \frac{\theta(\gamma \tau)}{ \theta(\tau)}
=
  \pmfrac{c}{d} \ep_d^{-1},
\end{gather*}
where
\begin{gather*}
\ep_d = \begin{cases} 
1\tx{,} & d\equiv 1 \pmod{4} \tx{,}\\
i\tx{,} & d \equiv 3 \pmod{4} \tx{.}
\end{cases}
\end{gather*}
For odd values of $d$, $d_1$, $d_2$ we have the useful formulas 
\begin{alignat}{2}
\label{eq:oneminusd-epd1d2}
 e\pmfrac{1-d}8&= \pmfrac2d\ep_d
\quad\tx{and}\quad
  \ep_{d_1d_2} &= \ep_{d_1}\ep_{d_2}(-1)^{\frac{d_1-1}2\frac{d_2-1}2}
\tx{.}
\end{alignat}
Define the Gauss sum for odd $d$ by 
\begin{gather*}
G(a,d) := \sum_{n\spmod d }  \left( \frac{n}{d} \right) e\pmfrac{an}{d}\tx{.}
\end{gather*}
Write $G(d)=G(1, d)$ for simplicity and recall that $G(a, d)=\pfrac a d G(d)$ if $(a, d)=1$.
For $d$ odd and squarefree we have the evaluation
\begin{gather}\label{eq:gausssum}
G(d)=\ep_d\sqrt d.
\end{gather}

Following Shimura \cite{shimura} let  $G$ be the group of pairs $[\alpha, \varphi(\tau)]$ where $\alpha=\pmatrix abcd\in \GL_2^+(\R)$ and $\varphi$ is  a holomorphic function on $\H$ with $\varphi(\tau)^2=t(\det\alpha)^{-\frac12}(c\tau+d)$, where $|t|=1$. The group operation is given by 
$$
[\alpha, \varphi(\tau)] \cdot [\beta, \rho(\tau)] = [\alpha \beta, \varphi(\beta \tau) \rho (\tau) ].
$$
For $k\in \frac12\Z$, $G$ acts on holomorphic functions $f$ on $\H$ by 
\begin{gather*}
\(f\sk[\alpha, \varphi(\tau)]\)(\tau):=\varphi(\tau)^{-2k}f(\alpha\tau).
\end{gather*}
If $\gamma=\pmatrix abcd\in \SL_2(\Z)$ we define $\gamma^*:=[\gamma, (c\tau+d)^{\frac12}]\in G$.

Throughout the paper, $\ell\geq 5$ will denote a fixed prime number.
Given  $k\in \frac12\Z$,  a positive integer $N$, and a multiplier system $\nu$ on $\Gamma_0(N)$, we denote by $M_k\(N, \nu \)$, $S_k\(N,  \nu \)$, and $M^!_k\(N,  \nu \)$ the spaces of modular forms, cusp forms, and weakly holomorphic modular forms of weight $k$ and multiplier $\nu$ on $\Gamma_0(N)$ 
whose Fourier coefficients are  algebraic numbers which are integral at all primes above $\ell$.
Forms in these spaces satisfy the transformation law
\begin{gather*}
f\sk \gamma^*=\nu (\gamma) f\ \ \ \text{for}\ \  \  \gamma=\pmatrix abcd\in \Gamma_0(N)
\end{gather*}
as well as the appropriate conditions at the cusps of $\Gamma_0(N)$ (weakly holomorphic forms are allowed poles at the cusps).
We assume  familiarity with the  situation when   $\nu=\chi \nu_\theta^r$, where $r\in \Z$ and $\chi$ is  a Dirichlet character.

We  will be mostly concerned with the spaces   $M_k\(N,  \chi \nu_\eta^r\)$ where $(r, 24)=~1$ and $\chi$ is  a Dirichlet character modulo~$N$,
and we  summarize some of their important properties.
 If $f\in M_k\(N, \chi \nu_\eta^r\)$,
then $\eta^{- r}f \in M^!_{k-\frac r2}(N,\chi)$.
It follows that $f$ has a Fourier expansion of the form
\begin{gather}
\label{eq:fourier-expansion}
  f
=
  \sum_{n\equiv  r\spmod{24}}
  a(n)q^\frac n{24}
\tx{.}
\end{gather}
We also see that 
\begin{gather}\label{eq:krcond}
  M_k\(N, \chi \nu_\eta^r\)=\{0\}
  \quad\text{unless}\quad
  2k-r \equiv 1-\chi(-1) \pmod{4}
\tx{.}
\end{gather}
In  particular, the assumption that $(r,24) = 1$ implies that  $k\not\in\Z$.

We will frequently make use of the $U$ and $V$ operators, whose properties are summarized in the next lemma.
\begin{lemma}\label{lem:uQ}
Suppose that $(r, 24)=1$, that $f \in M_k\(N,\chi\nu_\eta^r\)$ has Fourier expansion~\eqref{eq:fourier-expansion},  and that $m$ is a positive integer. 
Define 
\begin{gather*}
  f\big|U_m:=\sum a(mn)q^\frac n{24}
\quad\tx{and}\quad
  f\big|V_m:=\sum a(n)q^\frac {mn}{24}
  \tx{.}
\end{gather*}
Then 
\begin{gather*}
  f \big|U_m=m^{\frac k2-1} \sum_{v\spmod m} f\sk\left[\pmatrix 1 {24 v}0m, m^\frac14\right]
\quad\tx{and}\quad
  f \big|V_m=m^{-\frac{k}{2}} f\sk\left[\pmatrix m001, m^{-\frac14}\right]
\tx{.}
\end{gather*}
 \end{lemma}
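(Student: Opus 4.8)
The plan is to verify the two formulas by direct computation with the slash operator, first establishing that the right-hand sides define functions with the asserted $q$-expansions, and then noting that the transformation law is automatic because $f$ already transforms correctly under $\Gamma_0(N)$. I would begin with the $V_m$ operator, which is the easier of the two. Unwinding the definition in the display preceding the lemma, $\bigl(f\sk\pMatrix m001\bigr)(\tau) = m^{k/2} \cdot 1^{-k} f(m\tau) = m^{k/2} f(m\tau)$, so $m^{-k/2} f\sk\pMatrix m001 = f(m\tau)$. Substituting the Fourier expansion~\eqref{eq:fourier-expansion} gives $\sum_{n\equiv r(24)} a(n) e(m n \tau/24) = \sum a(n) q^{mn/24}$, which is exactly the claimed expansion of $f\big|V_m$. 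The only subtlety worth a sentence is that the power of the automorphy factor $(c\tau+d)^{-k}$ is trivial here since $c = 0$, $d = 1$, and the determinant factor $m^{k/2}$ is well-defined with the principal branch; no multiplier ambiguity enters.

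For the $U_m$ operator I would use the standard coset decomposition argument. The point is that $m^{k/2-1}\sum_{v \spmod m} f\sk\pMatrix 1{24v}0m$, when expanded, should extract the coefficients $a(mn)$. Concretely, $\bigl(f\sk\pMatrix 1{24v}0m\bigr)(\tau) = m^{k/2} m^{-k} f\bigl(\tfrac{\tau + 24v}{m}\bigr) = m^{-k/2} f\bigl(\tfrac{\tau+24v}{m}\bigr)$. Plugging in~\eqref{eq:fourier-expansion}, $f\bigl(\tfrac{\tau+24v}{m}\bigr) = \sum_{n\equiv r(24)} a(n) e\bigl(\tfrac{n(\tau+24v)}{24m}\bigr) = \sum_n a(n) e\bigl(\tfrac{nv}{m}\bigr) q^{n/(24m)}$. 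Multiplying by $m^{k/2-1}\cdot m^{-k/2} = m^{-1}$ and summing over $v \spmod m$, the inner sum $\tfrac1m\sum_{v\spmod m} e(nv/m)$ is the standard detector of the divisibility $m \mid n$, equal to $1$ if $m \mid n$ and $0$ otherwise. Writing $n = mn'$ (and noting $mn' \equiv r \pmod{24}$ forces $n' \equiv r \overline m \pmod{24}$, consistent with $(m,24)$-considerations when relevant), the surviving terms give $\sum_{n'} a(mn') q^{n'/24}$, which is precisely $f\big|U_m$ as defined.

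The remaining point is that $f\big|U_m$ and $f\big|V_m$, defined a priori only as formal $q$-series, are genuinely the holomorphic functions given by these slash expressions, and hence the identities in the lemma are honest identities of functions on $\H$; this is immediate since both sides are given by the same absolutely convergent $q$-expansion on $\H$. I do not expect a real obstacle here: the entire content is bookkeeping with the normalization conventions in $f\sk\gamma$ (the $(\det\gamma)^{k/2}$ factor) and the orthogonality of additive characters mod $m$. The one place to be slightly careful is the choice of branch of $(c\tau+d)^{-k}$ for half-integral $k$ — but in both formulas the relevant matrices have $c = 0$, so $(c\tau+d)^{-k} = d^{-k}$ with $d = m$ or $d = 1$ a positive real, and the principal branch makes this unambiguous; no multiplier system cocycle needs to be invoked.
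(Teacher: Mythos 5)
Your proposal is correct and follows essentially the same route as the paper: the paper's proof is precisely the orthogonality computation $m^{-1}\sum_n a(n)q^{n/(24m)}\sum_{v\smod m}\zeta_m^{vn}=\sum_n a(mn)q^{n/24}$ for $U_m$, with the $V_m$ identity treated as an immediate unwinding of the slash normalization. Your extra remarks on the principal branch and the $c=0$ automorphy factor are harmless bookkeeping that the paper leaves implicit.

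(Note: "\smod" above should read $v\pmod m$; the identity is $m^{-1}\sum_n a(n)q^{n/(24m)}\sum_{v\,(m)}\zeta_m^{vn}=\sum_n a(mn)q^{n/24}$.)
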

\begin{proof}
  Letting $\zeta_m$ denote a primitive $m$th root of unity, the sum over $v\pmod m$ becomes
\begin{gather*}
  m^{-1} \sum_n a(n)q^\frac{n}{24 m}\sum_{v\spmod m} \zeta_m^{v n}
=
  \sum_n a(mn)q^\frac n{24}.
\qedhere
\end{gather*}
\end{proof}
From a computation involving \eqref{eq:nueta} and \eqref{eq:oneminusd-epd1d2} it follows that for $(r, 24)=1$ we have 
\begin{gather}\label{eq:etamultv24}
  f \in M_k\(N, \chi\nu_\eta^r\)
\implies
  f\big| V_{24} \in M_k\(576N,  \chi\ptfrac{12}\bullet \nu_{\theta}^r \)
\tx{.}
\end{gather}
  If~$M$ is an odd positive squarefree integer,  denote by $\chi_M = \big(\frac{\bullet}{M}\big)$ the quadratic character of modulus~$M$.
Using Lemma~\ref{lem:uQ} and \eqref{eq:nueta} it can be checked 
(via  a somewhat tedious calculation which relies on \eqref{eq:oneminusd-epd1d2})
that if $(r, 24)=1$ and  $Q\geq 5$ is prime, then 
\begin{alignat}{2}
\label{eq:uqaction}
  U_Q &&\,:\, M_k\(N, \chi \nu_\eta^r\) &\longrightarrow M_k\(N \mfrac Q{(N, Q)},\, \chi\chi_Q\nu_\eta^{Qr}\)
\tx{,}
\\
\label{eq:vqaction}
  V_Q &&\,:\, M_k\(N, \chi\nu_\eta^r\) &\longrightarrow M_k\(NQ, \,\chi\chi_Q\nu_\eta^{Qr}\)
\tx{.}
\end{alignat}

 If $Q\geq 5$ is prime
 and $(r, 24)=1$,  we define the twist of~$f \in M_k(N,\chi \nu_\eta^r)$ with Fourier expansion~\eqref{eq:fourier-expansion} by
\begin{gather}\label{eq:quadtwist}
  f\otimes \chi_Q:=\sum \chi_Q (n) a(n)q^\frac n{24}
\tx{.}
\end{gather}
(this normalization, which disregards the denominator of the exponents,  is chosen for ease of notation in the proofs).
We have
\begin{gather*}
  f\otimes \chi_Q=\mfrac1{G(Q)}\sum_{v\spmod Q} \chi_Q (v) f\sk \left[\pmatrix1{\frac{24 v}Q}01, 1\right]
\tx{,}
\end{gather*}
from which a computation  as above gives
\begin{gather*}
  f\otimes\chi_Q \in M_k\(NQ^2, \chi\nu_\eta^r\)
\tx{.}
\end{gather*}

For each prime $Q\geq 5$ we have the Hecke operator
\begin{gather*}
  T_{Q^2} \,:\, S_k\(1, \nu_\eta^r\)\rightarrow S_k\(1, \nu_\eta^r\)
\tx{.}
\end{gather*}
If $f \in S_k\(1, \nu_\eta^r\)$  with $(r, 24)=1$ has Fourier expansion~\eqref{eq:fourier-expansion} then we have (see for example \cite[Proposition~11]{Yang_shimura})
\begin{multline}\label{eq:heckedef}
  f\big|T_{Q^2}
=
  \sum \(
  a(Q^2n) + Q^{k-\frac32}\pmfrac{-1}{Q}^{k-\frac12}\pmfrac{12n}{Q} a(n) + Q^{2k-2}a\pmfrac{n}{Q^2}
  \)
  q^\frac n{24}\\
  =f\big|U_{Q^2}+Q^{k-\frac32}\pmfrac{-1}{Q}^{k-\frac12}\pmfrac{12}{Q}f\otimes\chi_Q+Q^{2k-2} f\big|V_{Q^2}
\tx{.}
\end{multline}

For each squarefree $t$ with $(t, 6)=1$  there is a Shimura lift $\Sh_t$ on $S_k\(1, \nu_\eta^r\)$, defined via the relationship~\eqref{eq:etamultv24} and the usual Shimura lift~\cite{shimura} on~$S_k\(576, \pfrac{12}\bullet \nu_\theta^r\)$.
The lift  $\Sh_t$ can be described by  its action on Fourier expansions:
\begin{gather}\label{eq:shimlift}
\Sh_t\(\sum a(n)q^\frac n{24}\)=\sum A_t(n)q^n,
\end{gather}
where the coefficients $A_t(n)$ are given by 
\begin{gather}\label{eq:shimliftcoeff}
  A_t(n)
=
  \sum_{d\mid n}
  \pmfrac{-1}d^{k-\frac12}\pmfrac{12t}d d^{k-\frac32}\,
  a\pmfrac{tn^2}{d^2}
\tx{.}
\end{gather}
It follows that 
\begin{gather}\label{eq:shimcong}
  f \equiv 0 \pmod\ell
\iff
  \Sh_t(f) \equiv 0 \pmod\ell
  \qquad \text{for all squarefree $t$.}
\end{gather}
For the non-obvious direction of this equivalence, we argue as follows: if $f \not\equiv 0 \pmod\ell$ then for some squarefree~$t$ there is an  index~$n$ with $a(tn^2) \not\equiv 0\pmod\ell$. Letting $n_0$ denote the minimal such~$n$, we see from~\eqref{eq:shimliftcoeff} that $A_t(n_0) \not\equiv 0 \pmod\ell$.

The work of Shimura and Niwa \cite{Niwa} shows that if $f\in S_k\(1, \nu_\eta^r\)$, then $\Sh_t (f)\in S_{2k-1}(288)$. 
Moreover, for all primes $Q\geq 5$ we have 
\begin{gather}\label{eq:shimliftcommute}
  \Sh_t\(f\big|T_{Q^2}\)=\(\Sh_t f\)\big|T_Q
\tx{,}
\end{gather}
where $T_Q$ is the Hecke operator of index $Q$ on the integral weight space.

From recent  work of Yang~\cite{Yang_shimura}
it follows  that
\begin{gather}\label{eq:shimliftimage}
  \Sh_t \,:\,
  S_k\(1, \nu_\eta^r\)
\longrightarrow
  S_{2k-1}^{\operatorname{new}}(6)\otimes \pmfrac{12}\bullet
:=
  \left\{
  f \otimes \pmfrac{12}\bullet \,:\, f \in S_{2k-1}^{\operatorname{new}}(6)
  \right\}
\tx{.}
\end{gather}
To establish this, it suffices to prove that if $f\in S_k\(1, \nu_\eta^r\)$ is an eigenform of  $T_{Q^2}$ for primes $Q\geq 5$, then $\Sh_t(f)$ is in the space described in \eqref{eq:shimliftimage}.
Suppose that $f$ is such an eigenform and that $t$ is a squarefree positive integer with $(t, 6)=1$ and $\Sh_t(f)\neq 0$.
By \cite[Thm~1]{Yang_shimura}, there is a  newform $F\in S_{2k-1}^{\operatorname{new}}(6)$ with the same Hecke eigenvalues as   $\Sh_t(f)\otimes \pfrac{12}\bullet$ at all primes $Q\geq 5$.
By strong multiplicity one~\cite{jacquet-shalika-1981a,jacquet-shalika-1981b}, it follows that $\Sh_t(f)\otimes \pfrac{12}\bullet$ is a constant multiple of $F$.  The claim follows since
 $\Sh_t(f)$ is supported on exponents coprime to $6$.


If $M$ is a positive integer, we define
\begin{gather*}
W_M:=\left[\pmatrix 0{-1}M0, M^\frac14\tau^\frac12\right]
\tx{.}
\end{gather*}
We require the following  version of the $q$-expansion principle (see \cite[VII, Corollary~3.12]{deligne-rapoport} or \cite[Theorem~4.8]{Radu_aoconj}).

\begin{proposition}\label{prop:delignerap}
Suppose that $k$ and $N$ are positive integers, that $\ell$ is prime, and that $\pi$ is prime ideal above~$\ell$ in a number field~$\mathbb{F}$ which  contains all $N$th roots of unity. Write~$\mathcal{O}_\pi \subseteq~\mathbb{F}$ for the ring of elements which are integral at $\pi$. Suppose that $f\in M_k(\Gamma(N))\cap \mathcal{O}_\pi[\![q^\frac1N]\!]$ and that
 $\gamma \in \Gamma_0(\ell^m)$,
where $\ell^m$ is  the highest power of $\ell$ dividing $N$.
Then~$f\sk\gamma^* \in \mathcal{O}_\pi[\![q^\frac1N]\!]$, and for $n\geq 0$ we have 
\begin{gather*}
f \equiv 0 \pmod{\pi^n} \iff f\sk\gamma^* \equiv 0\pmod{\pi^n} \tx{.}
\end{gather*}
\end{proposition}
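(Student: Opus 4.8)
The plan is to reduce the statement to the classical $q$-expansion principle in its scheme-theoretic form, as found in \cite[VII, Corollary~3.12]{deligne-rapoport}. Recall that the modular curve $X(N)$ over $\Z[\frac1N, \zeta_N]$ carries a tautological cusp, and that a modular form $f \in M_k(\Gamma(N))$ with coefficients in $\mathcal{O}_\pi$ corresponds to a section of $\omega^{\otimes k}$ over $X(N)_{\mathcal{O}_\pi}$ whose $q$-expansion at that distinguished cusp lies in $\mathcal{O}_\pi[\![q^{1/N}]\!]$. The content of the $q$-expansion principle is that the reduction of this section modulo $\pi^n$ vanishes as a global section if and only if its $q$-expansion at the distinguished cusp vanishes modulo $\pi^n$; this uses that $X(N)_{\mathcal{O}_\pi/\pi^n}$ is flat over $\mathcal{O}_\pi/\pi^n$ and that the distinguished cusp meets every irreducible component of every geometric fiber, which requires that $\mathcal{O}_\pi$ contains the $N$th roots of unity (so that the cusps are rational and the special fiber is as well-behaved as possible).

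First I would record that the slash operator $f \mapsto f\sk\gamma$ for $\gamma \in \SL_2(\Z)$ corresponds geometrically to pulling back along an automorphism of $X(N)$; since we only assume $\gamma \in \Gamma_0(\ell^m)$ where $\ell^m \| N$, the point is that this automorphism permutes the cusps but fixes the distinguished cusp up to the action of $(\Z/N\Z)^\times$ on $\zeta_N$ — more precisely, $\gamma \in \Gamma_0(\ell^m)$ ensures that the cusp $\gamma \cdot \infty$ has width compatible with integrality and that the local parameter there is again a power of $q^{1/N}$ with $\mathcal{O}_\pi$-coefficients after possibly twisting by a root of unity, which lies in $\mathcal{O}_\pi$ by hypothesis. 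Thus $f\sk\gamma$ again has a $q$-expansion in $\mathcal{O}_\pi[\![q^{1/N}]\!]$: this is the first assertion. For the equivalence, both $f$ and $f\sk\gamma$ are (up to unit) the same global section of $\omega^{\otimes k}$ viewed through different cusps, so $f \equiv 0 \pmod{\pi^n}$ as a section $\iff$ $f\sk\gamma \equiv 0 \pmod{\pi^n}$ as a section, and by the $q$-expansion principle applied at each of the two cusps separately this is equivalent to the vanishing of the respective $q$-expansions.

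The main obstacle — and the reason the hypothesis $\gamma \in \Gamma_0(\ell^m)$ appears rather than an arbitrary $\gamma \in \SL_2(\Z)$ — is controlling integrality of the $q$-expansion of $f\sk\gamma$ at the bad prime $\ell$. Away from $\ell$ one has enough room in the moduli interpretation, but at $\ell$ the curve $X(N)$ can be badly ramified, and a general Atkin–Lehner-type involution need not preserve the $\mathcal{O}_\pi$-structure; the condition that $\gamma$ be upper-triangular modulo $\ell^m$ is exactly what is needed so that the induced automorphism extends to an automorphism of the $\mathcal{O}_\pi$-model and hence preserves $\mathcal{O}_\pi[\![q^{1/N}]\!]$. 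I would handle this by citing \cite[Theorem~4.8]{Radu_aoconj}, where precisely this formulation (with the $\Gamma_0(\ell^m)$ hypothesis) is proved, rather than reproving the geometry; the remaining work is then the bookkeeping above to translate between the half-integral-weight slash normalization used in this paper and the scheme-theoretic statement, which is routine.
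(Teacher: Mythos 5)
The paper gives no proof of this proposition at all: it is quoted verbatim from the literature, with the parenthetical citation to \cite[VII, Corollary~3.12]{deligne-rapoport} and \cite[Theorem~4.8]{Radu_aoconj} serving as the entire justification. Your proposal ultimately does the same thing (deferring to Radu's Theorem~4.8 for the precise statement with the $\Gamma_0(\ell^m)$ hypothesis), so it matches the paper's approach; the geometric sketch you add is a reasonable gloss, though some of its details (e.g.\ the description of how $\gamma$ acts on the distinguished cusp) are heuristic rather than the actual mechanism in Radu's argument.
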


For convenience, we record a  lemma which allows us to apply Proposition~\ref{prop:delignerap} in a   straightforward way to weakly holomorphic modular forms of half-integral weight.
\begin{lemma}
\label{la:delignerap}
Suppose that $k \in \frac{1}{2} \Z$, that $r \in \Z$, and that $\ell$ is a prime number. Write~$\mathcal{O}_\ell \subseteq \overline{\mathbb{Q}}$ for the ring of algebraic numbers which are integral at all primes dividing~$\ell$. Let~$h$ be a positive integer. Suppose that~$f \in \overline{\mathbb{Q}}[\![q^\frac1{hN}]\!][q^{-1}]$ converges absolutely and locally uniformly if $0 < |q| < 1$, that $f^h \in M^!_{hk}(\Gamma(N),\nu_\eta^{r})\cap \mathcal{O}_\ell[\![q^\frac1N]\!][q^{-1}]$ or $f^h \in M^!_{hk}(\Gamma(N), \nu_\theta^{r})\cap \mathcal{O}_\ell[\![q^\frac1N]\!][q^{-1}]$, and that~$\gamma \in \Gamma_0(\ell^m)$, where $\ell^m$ is the highest power of~$\ell$ dividing~$N$. Then $f \sk\gamma^* \in \mathcal{O}_\ell[\![q^\frac1N]\!][q^{-1}]$, and for~$n \ge 0$ we have
\begin{gather}
\label{eq:la:delignerap}
  f \equiv 0
  \pmod{\ell^n}
\iff
  f \sk\gamma^* \equiv 0
  \pmod{\ell^n}
\tx{.}
\end{gather}
If~$\ell$ and~$N$ are coprime, then for any positive integer~$M$ with~$\ell \nmid M$ we have
\begin{gather}
\label{eq:qexp}
  f \equiv 0
  \pmod{\ell^n}
\iff
  f \sk W_M \equiv 0
  \pmod{\ell^n}
\tx{.}
\end{gather}
\end{lemma}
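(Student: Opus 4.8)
The plan is to reduce both assertions to the integral-weight $q$-expansion principle of Proposition~\ref{prop:delignerap}, by multiplying $f^h$ by a suitable power of $\eta$ (resp.\ $\theta$) that clears the multiplier, the half-integral weight, and the poles at the cusps. Consider first the $\nu_\eta$ case. Since $f^h\in M^!_{hk}(\Gamma(N),\nu_\eta^r)$, it transforms under $\pmatrix 1N01\in\Gamma(N)$ by a $24$th root of unity, so its $q$-expansion is supported on exponents in $\tfrac r{24}+\tfrac1N\Z$, whence $f^h=q^{r/24}g$ with $g\in\mathcal{O}_\ell[\![q^{1/N}]\!][q^{-1}]$. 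Fix a large integer $c$, put $s:=24c-r$, and set
\[
  F:=\eta^{s}f^h=q^{c}\prod_{n\ge1}(1-q^n)^{s}\,g .
\]
Because $\nu_\eta^s\nu_\eta^r=\nu_\eta^{24c}$ is trivial, $\eta$ is modular on $\SL_2(\Z)$ and nonvanishing on $\H$, and $c$ is large, $F$ is a holomorphic modular form on $\Gamma(N)$ with trivial multiplier, of positive integer weight $w=s/2+hk$ (its integrality being precisely the compatibility condition $hk\equiv r/2\pmod1$ between $\nu_\eta^r$ and the weight $hk$), and with $q$-expansion in $\mathcal{O}_\ell[\![q^{1/N}]\!]$. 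The point of this $F$ is that $q^{c}\prod(1-q^n)^{s}$ is a unit in $\mathcal{O}_\ell[\![q^{1/N}]\!][q^{-1}]$, so multiplication by $\eta^{s}$ — and, since $\gamma\in\Gamma_0(\ell^m)\subseteq\SL_2(\Z)$ and $\eta^{s}\sk\gamma=\nu_\eta(\gamma)^{s}\eta^{s}$, also by $\eta^{s}\sk\gamma$ — is invertible over $\mathcal{O}_\ell$, hence preserves $\ell$-integrality and divisibility by every power of $\ell$.

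Apply Proposition~\ref{prop:delignerap} to $F$ over a number field $\mathbb{F}$ containing the $N$th roots of unity and the Fourier coefficients in play: for $\gamma\in\Gamma_0(\ell^m)$ it gives $F\sk\gamma\in\mathcal{O}_\ell[\![q^{1/N}]\!]$ and, running over all primes above $\ell$, the equivalence $F\equiv0\pmod{\ell^{j}}\iff F\sk\gamma\equiv0\pmod{\ell^{j}}$ for every $j\ge0$. Dividing by $\eta^{s}$ and $\eta^{s}\sk\gamma$ transports this to $f^h$: we obtain $f^h\sk\gamma\in\mathcal{O}_\ell[\![q^{1/N}]\!][q^{-1}]$ and $f^h\equiv0\pmod{\ell^{j}}\iff f^h\sk\gamma\equiv0\pmod{\ell^{j}}$ for all $j\ge0$. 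To descend to $f$ itself I would use the elementary fact that for a Laurent series $u$ over $\overline{\mathbb{Q}}$ with $\ord_{\mathfrak p}u$ finite at every prime $\mathfrak p\mid\ell$ (automatic for a modular form with algebraic Fourier coefficients) one has $\ord_{\mathfrak p}(u^h)=h\,\ord_{\mathfrak p}u$; consequently $u$ is $\ell$-integral iff $u^h$ is, and for $n\ge1$, $u\equiv0\pmod{\ell^{n}}\iff u^h\equiv0\pmod{\ell^{hn}}$, checked one prime at a time. Applying this with $u=f$ and $u=f\sk\gamma$, and using $(f\sk\gamma)^h=f^h\sk\gamma$ together with the case $j=hn$ above, yields both $f\sk\gamma\in\mathcal{O}_\ell[\![q^{1/N}]\!][q^{-1}]$ and the equivalence~\eqref{eq:la:delignerap}.

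The $\nu_\theta$ case is entirely analogous, with $\theta$ in place of $\eta$: here $4\mid N$ (needed for $\nu_\theta$ to restrict to $\Gamma(N)$), and the relevant point is that $\theta$ and each of its slashes $\theta\sk\gamma$ for $\gamma\in\SL_2(\Z)$ is, after removing a rational power of $q$, an invertible power series over $\mathcal{O}_\ell$ with inverse over $\mathcal{O}_\ell$, since $\ell$ is odd and the leading Fourier coefficients of $\theta$ at all cusps are units at $\ell$. For the statement~\eqref{eq:qexp}, assume $\ell\nmid N$, so $m=0$ and $\Gamma_0(\ell^{m})=\SL_2(\Z)$. Write $W_M=S\pmatrix M001$ with $S:=\pmatrix 0{-1}10\in\SL_2(\Z)$; then, by Lemma~\ref{lem:uQ},
\[
  f\sk W_M=(f\sk S)\sk\pMatrix M001=M^{k/2}\,\bigl((f\sk S)\big|V_M\bigr),
\]
and $M^{k/2}$ is a unit at $\ell$ because $\ell\nmid M$. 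Since $(f\sk S)\big|V_M$ and $f\sk S$ have the same Fourier coefficients up to reindexing, and~\eqref{eq:la:delignerap} applied with $\gamma=S$ gives $f\equiv0\pmod{\ell^{n}}\iff f\sk S\equiv0\pmod{\ell^{n}}$ (with $f\sk S\in\mathcal{O}_\ell[\![q^{1/N}]\!][q^{-1}]$), chaining these equivalences produces~\eqref{eq:qexp}.

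The main obstacle lies in the verifications underlying the first two paragraphs: one must check with care that for $c$ large enough $\eta^{s}$ cancels the poles of $f^h$ at every cusp of $\Gamma(N)$ (so that $F$ is genuinely holomorphic there), that $w$ is a positive integer and the multiplier of $F$ is trivial, and that the $q$-expansion of $F$ indeed lies in $\mathcal{O}_\ell[\![q^{1/N}]\!]$; and one must make the passage through the fractional multiplier of $f$ — via the $h$th power and the order computation — completely rigorous, together with its $\nu_\theta$ analogue.
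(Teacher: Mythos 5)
Your proof is correct and follows essentially the same route as the paper: multiply by a large power of $\eta$ (congruent to $-r$ mod $24$) to reduce to the integral-weight, trivial-multiplier $q$-expansion principle of Proposition~\ref{prop:delignerap}, pass between $f$ and $f^h$ via the multiplicativity of $\pi$-adic valuations on power series, and deduce \eqref{eq:qexp} from the factorization $W_M=\pmatrix 0{-1}10\pmatrix M001$. The only cosmetic difference is the order of the two reductions (you clear the multiplier on $f^h$ first and descend to $f$ afterwards, whereas the paper reduces to $h=1$ first), which changes nothing of substance.
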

\begin{proof}
Since $W_M=\pmatrix0{-1}10^*\left[\pmatrix M001, M^{-\frac14}\right]$, the
 second assertion follows from  \eqref{eq:la:delignerap}.

Assume that the lemma holds for~$h=1$. Given~$f$ and an arbitrary~$h$ as in the statement, apply the lemma with~$n$ replaced by~$h n$. This yields~$f^h \big|_{hk} \gamma^* \in \mathcal{O}_\ell[\![ q^{\frac{1}{N}} ]\!][q^{-1}]$ and the equivalence
\begin{gather*}
  f^h \equiv 0
  \pmod{\ell^{hn}}
\iff
  f^h \big|_{hk}\gamma^* \equiv 0
  \pmod{\ell^{hn}}
\tx{.}
\end{gather*}
Observe that~$f^h \big|_{hk}\gamma^* = (f \sk \gamma^*)^h$, and hence $f \sk\gamma^* \in \mathcal{O}_\ell[\![ q^{\frac{1}{hN}} ]\!][q^{-1}]$. Since we further have the equivalences
\begin{alignat*}{3}
  &f^h \equiv 0
  &\pmod{\ell^{hn}}
&\iff
  f \equiv 0
  &&\pmod{\ell^n},
\\
  &f^h \big|_{hk}\gamma^* \equiv 0
  &\pmod{\ell^{hn}}
&\iff
  f \sk\gamma^* \equiv 0
  &&\pmod{\ell^n}
\tx{,}
\end{alignat*}
we can and will assume that~$h = 1$ in the remainder of the proof.

To prove~\eqref{eq:la:delignerap} in the case of~$h = 1$, we employ an argument of Jochnowitz~\cite{jochnowitz-2004-preprint}; a slightly different argument was given by Radu~\cite{Radu_aoconj}. We restrict to the case of the eta multiplier, since the theta multiplier can be handled in an analogous way. Further, we can and will replace~$\mathcal{O}_\ell$ by its intersection with a suitable number field that contains the Fourier coefficients of~$f$ and all~$N$th roots of unity. Let $\ell^n = \pi_1^{n_1} \cdots \pi_m^{n_m}$ be the prime ideal factorization of~$\ell^n$ in~$\mathcal{O}_\ell$. It suffices to show that
\begin{gather*}
  f \equiv 0
  \pmod{\pi_i^{ n_i}}
\iff
  f \sk\gamma^* \equiv 0
  \pmod{\pi_i^{ n_i}}\ \ \ \text{for all $i$}
  \tx{.}
\end{gather*}
Observe that~$f \equiv 0 \pmod{\pi_i^{ n_i}}$ is equivalent to~$\eta^j f \equiv 0 \pmod{\pi_i^{n_i}}$ for any~$j \in \Z$. 
 Choosing a sufficiently large $j$ with   $j\equiv -r\pmod{24}$, we have 
 $\eta^j f \in M_{k + \frac j 2}(\Gamma(N))$.  The lemma follows by applying  Proposition~\ref{prop:delignerap} to $\eta^j f$.
\end{proof}

Finally, we justify the statements~\eqref{eq:fldelprop} and~\eqref{eq:fldeldef} from the introduction. Let~$\Delta = \eta^{24}$ be the unique normalized cusp form of weight $12$ on 
$\SL_2(\Z)$, and   let $U_\ell$ and $V_\ell$ denote the usual operators on spaces of integral weight modular forms.
By~\cite[(3.2)]{Ahlgren-Boylan} we have 
\begin{gather*}
  \sum_{\pfrac{-n}\ell=0}
  p\pmfrac{n+1}{24} q^{\frac{n}{24}}
\equiv
   \pmfrac{\Delta^{\frac{\ell^2-1}{24}}\big| U_\ell}{\eta^\ell}  \Big|V_\ell
  \pmod\ell
\tx{.}
\end{gather*}
By \cite[Lemma 2]{serre_formes}, $\Delta^\frac{\ell^2-1}{24}\big| U_\ell$ is congruent modulo~$\ell$ to a modular form $G_\ell$ of weight  $\ell-1$ on $\SL_2(\Z)$. 
The form $G_\ell$ vanishes modulo $\ell$ to order  $>\frac\ell{24}$.  Note that  $M_{\ell-1}(1)\cap\Z[\![q]\!]$ has a basis $\{h_1, \dots, h_d\}$ with $h_j=q^j+\dots$.  After subtracting a suitable integral linear combination of 
these basis elements, we may assume that $G_\ell$ vanishes to order $>\frac\ell{24}$.
Therefore we can take 
\begin{gather*}
F_\ell= \mfrac{G_\ell}{\eta^\ell} \in S_{\frac{\ell - 2}{2}} \( 1, \nu_\eta^{-\ell}\)
\end{gather*}
and 
$f_{\ell,0}=F_\ell^\ell \in S_{\frac{\ell^2 - 2\ell}{2}} \( 1, \nu_\eta^{-1}\)$ as claimed. After dividing the modular form which is described in  \cite[(3.4)]{ahlgren-ono_conj} by $\eta^{\ell^2}$, we deduce that there is a modular form $g\in S_{\frac{\ell^2 - 2}{2}} \( 1, \nu_\eta^{-1}\)$ such that 
\begin{gather*}
  g
\equiv
  \sum_{\pfrac{-n}\ell=0}
  p\pmfrac{n+1}{24} q^{\frac{n}{24}}
  +
  2  
  \sum_{\pfrac{-n}{\ell}=-1}
  p\pmfrac{n+1}{24} q^{\frac{n}{24}}
  \pmod\ell
\tx{.}
\end{gather*}
Subtracting $f_{\ell, 0} E_{\ell-1}$, where $E_{\ell-1}\equiv 1\pmod \ell$ is the Eisenstein series of weight $\ell-1$, it follows that 
 we can also take~$f_{\ell,-1}$ as stated.

\section{Proof of Theorem~\ref{thm:UQVQ}}\label{sec:squarefree}
In this section we address the issue of congruences $p(\ell Q n+\beta)\equiv 0\pmod\ell$ for squarefree integers $Q$.
Theorem~\ref{thm:UQVQ} will follow from Proposition~\ref{prop:killsillyQ} and Theorem~\ref{thm:theorem1full} 
below.

Given a positive integer $m$ and an integer $\beta$, define the set 
\begin{gather}\label{eq:pmtdef}
S_{m, \beta}:=\{\beta'\!\!\pmod m\ : \ 24\beta'-1\equiv a^2(24\beta-1)\!\!\pmod m\ \ \text{for some $a$ with $(a, 6m)=1$}\}.
\end{gather}
Crucial to our arguments is  the following result of Radu.
\begin{theorem}{\cite[Theorem~5.4]{Radu_subbarao}}\label{thm:raduclasses}
Suppose that $m$ and $\ell$ are positive integers and that for some integer $\beta$  we have a congruence $p(mn+\beta)\equiv 0\pmod\ell$.
Then for all $\beta'\in S_{m, \beta}$ we have a congruence $p(mn+\beta')\equiv 0\pmod\ell$.
\end{theorem}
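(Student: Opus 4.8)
The plan is to exploit the relationship between the partition values $p(mn+\beta)$ and the expansion of $1/\eta$ at various cusps of the modular curve $X_1(576 m)$ (or a suitable level), following Radu's framework for ``Ramanujan-type'' congruences. First I would encode the hypothesis $p(mn+\beta)\equiv 0\pmod\ell$ as a congruence between the weakly holomorphic form
\begin{gather*}
  g_\beta
:=
  q^{\frac{24\beta-1}{24m}}
  \sum_{n\ge 0} p(mn+\beta)\, q^{n}
\end{gather*}
and $0\pmod\ell$; this $g_\beta$ is, up to the standard twist by $V_{24}$ and $U_m$, obtained from $1/\eta$, hence a component of a half-integral weight (or, after squaring, integral weight) form on a congruence subgroup with coefficients integral at $\ell$. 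The key observation is that $g_\beta$ and $g_{\beta'}$ for $\beta'\in S_{m,\beta}$ arise from the \emph{same} modular object, merely read off along different cusps — specifically, cusps of $X_1(576m)$ (or $\Gamma_0(576m)$ with character) lying in the same $\Gamma_0$-orbit, the orbit being controlled precisely by the square class of $24\beta-1$ modulo $m$ together with the coprimality conditions built into the definition \eqref{eq:pmtdef}.

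The core steps, in order, are: (i) realize $1/\eta$ after applying $V_{24}$ as (the reciprocal of) a form with theta multiplier on a fixed level, so that $g_\beta$ corresponds to evaluating this form along a specific cusp $\mathfrak{a}_\beta$; (ii) for $a$ with $(a,6m)=1$, exhibit an explicit matrix $\gamma_a\in\SL_2(\Z)$ (or in $\Gamma_0(\ell^{v})$ for the relevant $\ell$-part, so that Proposition~\ref{prop:delignerap} applies) which sends the cusp $\mathfrak{a}_\beta$ to $\mathfrak{a}_{\beta'}$ whenever $24\beta'-1\equiv a^2(24\beta-1)\pmod m$ — the square $a^2$ appears because the cusp width and the leading exponent $q^{(24\beta-1)/24m}$ transform quadratically in the scaling parameter; (iii) invoke the $q$-expansion principle in the form of Lemma~\ref{la:delignerap}, which guarantees that $g_\beta\equiv 0\pmod\ell$ if and only if its slash by $\gamma_a$ (a scalar multiple, by the multiplier, of $g_{\beta'}$) is $\equiv 0\pmod\ell$; (iv) conclude $p(mn+\beta')\equiv 0\pmod\ell$ by comparing Fourier coefficients, noting that the multiplier system contributes only a root of unity, which is a unit at $\ell$.

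The main obstacle will be step (ii): producing the matrix $\gamma_a$ and verifying that it genuinely matches the two cusps with the correct width and that the induced transformation of the multiplier system is a root of unity — this is exactly the kind of tedious but essential calculation with $\nu_\eta$ (via \eqref{eq:nueta} and \eqref{eq:oneminusd-epd1d2}) and with cusp parameters that Radu carries out. One must be careful about two points: that $\gamma_a$ can be chosen in $\Gamma_0(\ell^m)$ so that Lemma~\ref{la:delignerap} is applicable (using $\ell\nmid 6m$ when that holds, and otherwise absorbing the $\ell$-part into the level and using the freedom to multiply by powers of $\eta$), and that the condition $(a,6m)=1$ — rather than merely $(a,m)=1$ — is what is needed to keep everything inside the chosen $\Gamma_1$-structure and to ensure the $6$ in the denominator of the exponents (coming from the $\frac{1}{24}$ and the theta-multiplier level $576$) behaves. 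Once the cusp-matching matrix is in hand, the rest is a direct application of the $q$-expansion principle already recorded in the Background section.
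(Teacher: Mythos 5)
First, a point of order: the paper does not prove Theorem~\ref{thm:raduclasses} at all --- it is imported verbatim from Radu (Theorem~5.4 of \cite{Radu_subbarao}) and used as a black box. So there is no internal proof to compare your attempt against; what follows is an assessment of your sketch on its own terms, measured against what Radu actually has to do.

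Your outline does track the genuine architecture of Radu's argument: the progressions $p(mn+\beta)$ are packaged as components $g_\beta$ of a single modular object, an explicit $\SL_2(\Z)$-matrix relates the components along a square class, and the Deligne--Rapoport $q$-expansion principle (Proposition~\ref{prop:delignerap} / Lemma~\ref{la:delignerap}) transports the congruence. Indeed, Proposition~\ref{prop:RaduTransformation} in this paper is precisely the kind of explicit slash computation your step~(ii) calls for. The problem is that essentially all of the mathematical content of the theorem lives in step~(ii), and you have deferred it. Three things there are nontrivial and none is supplied: (a)~the matrix itself, together with the multiplier bookkeeping; (b)~the structure of $g_\beta\sk\gamma_a$ --- it is \emph{not} in general a single scalar multiple of one $g_{\beta'}$, but a sum of unit multiples of several components (compare the sum over $d\mid Q$ in \eqref{eq:Radutransformation}), and one must recover the individual $g_{\beta'}$ by separating terms according to the fractional part of the exponent $\frac{24\beta'-1}{24m}$, exactly as the paper does when it isolates $F_{\ell,Q,\beta}$; and (c)~the assertion that the resulting orbit of residues $24\beta'-1$ exhausts \emph{all} of $S_{m,\beta}$ as defined in \eqref{eq:pmtdef}, which is a separate group-theoretic statement about which squares modulo $24m$ are realized by lower-left/lower-right entries of admissible matrices. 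Without (c) you would only get stability on a sub-orbit, not the full square class.

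One further gap of substance: Theorem~\ref{thm:raduclasses} is stated for arbitrary positive integers $m$ and $\ell$ --- $\ell$ need not be prime and may divide $m$ --- whereas Lemma~\ref{la:delignerap} is formulated for a prime $\ell$, and Proposition~\ref{prop:delignerap} requires $\gamma\in\Gamma_0(\ell^v)$ with $\ell^v$ the exact $\ell$-part of the level. Your parenthetical remark gestures at this but does not resolve it: for composite $\ell$ one must run the argument prime-by-prime with the correct prime powers, and for each prime one must actually verify that the cusp-matching matrix can be taken upper-triangular modulo that prime power while still hitting every class in $S_{m,\beta}$. This is a real constraint on the choice in step~(ii), not an afterthought. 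As submitted, the proposal is a correct plan with the decisive computations missing.
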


We begin by making a  reduction.
From \cite{Ahlgren-Boylan} we know that for $\ell\geq 13$  there are no congruences of the form
\begin{gather*}
p(\ell n+\beta)\equiv 0\pmod\ell.
\end{gather*}
It follows from the next result that for such $\ell$  there are no congruences of the form
\begin{gather*}
p(\ell Qn+\beta)\equiv 0\pmod\ell
\tx{,}
\end{gather*}
where $Q$ is a squarefree positive integer with $Q\mid  24\beta-1$.
In particular,  this result implies  the first assertion in Theorem~\ref{thm:UQVQ}.

\begin{proposition}\label{prop:killsillyQ}
Suppose that $\ell\geq 5$ is prime,   that  $Q$ is a squarefree positive integer with  $(Q, 6\ell)=1$, and that $\beta\in \Z$.  
Write 
\begin{gather}\label{eq:Qprimedef}
Q = Q' Q'', \ \ \text{where}\ \  (Q',  24\beta-1) = 1 \ \ \text{and}\ \  Q'' \mid  24 \beta-1.
\end{gather}
Then there is a congruence 
\begin{gather}\label{eq:cong1}
p(\ell Qn+\beta)\equiv 0\pmod\ell
\end{gather} 
if and only if there is a congruence 
\begin{gather}\label{eq:cong2}
p(\ell Q'n+\beta)\equiv 0\pmod\ell.
\end{gather} 
\end{proposition}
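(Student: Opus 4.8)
The plan is to prove the non-trivial implication; the converse is immediate, since $Q'\mid Q$ forces $\beta+\ell Q\Z\subseteq\beta+\ell Q'\Z$, so vanishing of $p$ modulo $\ell$ on the larger progression gives it on the smaller. First I would reduce to the case that $Q''$ is prime by peeling off the prime divisors of $Q''$ one at a time: it suffices to show that for a prime $q\mid 24\beta-1$ with $(q,6\ell)=1$ and a positive integer $L$ with $(q,L)=1$, a congruence $p(\ell qLn+\beta)\equiv 0\pmod\ell$ forces $p(\ell Ln+\beta)\equiv 0\pmod\ell$; at each stage $L$ is $Q'$ times the product of the not-yet-removed primes, which is prime to the prime being removed because $Q$ is squarefree.

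For this single-prime statement I would pass to the level-one modular forms of \eqref{eq:fldelprop}. Put $\delta=\pfrac{1-24\beta}\ell\in\{0,-1\}$. Then $p(\ell qLn+\beta)\equiv 0\pmod\ell$ is equivalent, by \eqref{eq:fldeldef}, to the vanishing modulo $\ell$ of the form $g$ obtained from $f_{\ell,\delta}$ by retaining only the Fourier coefficients of index $\equiv 24\beta-1\pmod{24\ell qL}$, and $p(\ell Ln+\beta)\equiv 0\pmod\ell$ to the vanishing of the analogous form $h$ with index $\equiv 24\beta-1\pmod{24\ell L}$; when $\delta=0$ I would use the manifestly level-one form $F_\ell=f_{\ell,0}\big|U_\ell$ of weight $\tfrac{\ell-2}2$ instead, which also clears the factor $\ell$ from the level. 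Because $q\mid 24\beta-1$ and $(q,\ell L)=1$, every index occurring in $g$ is divisible by $q$, so $g=(h\big|U_q)\big|V_q$; hence \eqref{eq:cong1} is equivalent to $h\big|U_q\equiv 0\pmod\ell$, and everything comes down to showing that $h\big|U_q\equiv 0\pmod\ell$ forces $h\equiv 0\pmod\ell$.

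The key point is that $h$ is a $q$-old form in disguise: it is an $\mathcal{O}_\ell$-linear combination of the translates $\tau\mapsto\tau+24v/(\ell L)$ of a level-one form, so after applying $V_{24}$ it lies, by \eqref{eq:etamultv24}, in a space whose level is prime to $q$ (here I use that $Q$ is squarefree and $(Q,6\ell)=1$). I would then show that $U_q$ is injective modulo $\ell$ on the span of $h$ and $h\big|V_q$ — either by deducing $h\big|W_q\equiv 0$ from $h\big|U_q\equiv 0$ through the $q$-expansion principle (Lemma~\ref{la:delignerap}), using the factorization $W_q=\pmatrix0{-1}10\pmatrix q001$ and the level-one transformation law \eqref{eq:etamult}, so that $h\equiv 0$; or by transferring the question to integral weight via the Shimura correspondence \eqref{eq:shimcong}, where the invertibility of $U_q$ on oldforms follows from the Euler factor at $q$ having a unit constant term modulo $\ell$. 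The hypothesis $q\mid 24\beta-1$ is indispensable precisely at this step: it is what makes the quadratic character $\chi_q$ — which would otherwise twist these identities and bring in $f_{\ell,\delta}\otimes\chi_q$ — disappear, so that $h$ is genuinely paired only with $h\big|V_q$ and the Euler-factor argument applies. Radu's Theorem~\ref{thm:raduclasses} enters to supply the full collection of square-class congruences that makes this reformulation robust (and lets one move freely between representatives of $\beta$ modulo $S_{\ell Q,\beta}$). I expect the decisive obstacle to be making the injectivity of $U_q$ modulo $\ell$ precise: it rests on a careful and somewhat tedious manipulation of the half-integral-weight automorphy factors — the multiplier formula \eqref{eq:nueta}, the identities \eqref{eq:oneminusd-epd1d2}, and Gauss sums as in \eqref{eq:gausssum} — through the relevant coset decompositions, and it is exactly there that $q\mid 24\beta-1$ must be exploited.
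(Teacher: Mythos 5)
Your overall architecture matches the paper's: peel off the primes of $Q''$ one at a time, identify each congruence with the vanishing modulo $\ell$ of a projected modular form, use $q\mid 24\beta-1$ to turn the projection at $q$ into $U_qV_q$, and reduce everything to the injectivity of $U_q$ modulo $\ell$, proved by a Fricke computation and the $q$-expansion principle (this is the paper's Lemma~\ref{lem:uq}, applied successively to the primes of $Q''$). One small correction to your framing: the hypothesis $q \mid 24\beta-1$ is not what makes $U_q$ injective modulo $\ell$ --- Lemma~\ref{lem:uq} needs no such hypothesis, since the twist term $g\otimes\chi_q$ and the $g|V_{q^2}$ term it produces have essentially disjoint supports; the hypothesis is used only where you also say it is, namely to collapse the square class modulo $q$ to the zero class so that the relevant projection is $U_qV_q$ rather than a quadratic twist.

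The genuine gap is in your construction of $g$ and $h$ as single-congruence-class projections of $f_{\ell,\delta}$ modulo $24\ell qL$ and $24\ell L$. Isolating a single class requires averaging over additive characters modulo $\ell L$, so for $\delta=-1$ the resulting form has level divisible by $\ell^2$ (and, incidentally, the averaging coefficients $\tfrac{1}{\ell L}\zeta^{-v(24\beta-1)}$ are not $\ell$-integral, though the resulting sub-series of an integral $q$-expansion still is). You repair the level problem only for $\delta=0$, via $F_\ell$. But the statement \eqref{eq:qexp} that you invoke to pass from $h|U_q\equiv 0$ to $h|U_q\big|W_{qN}\equiv 0$ explicitly requires $\ell\nmid N$: its proof rests on $W_M=\pmatrix 0{-1}10\pmatrix M001$ with the first factor lying in $\Gamma_0(\ell^m)$ only when $m=0$. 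So for $\delta=-1$ the decisive step of your argument is not licensed as written; salvaging it would require a partial Atkin--Lehner operator at $q$ factored through $\Gamma_0(\ell)$ and a rerun of the coset computation at a level divisible by $\ell^2$ with the additive twists present. The paper avoids all of this by using Radu's Theorem~\ref{thm:raduclasses} more substantively than you do: the single congruence \eqref{eq:cong1} propagates to every $\beta'$ in the square class $S_{\ell Q,\beta}$, and the sum of the single-class projections over that square class is $g=f_{\ell,\delta}\big|B_{Q_1}\cdots B_{Q_t}$ and $h=g\big|U_{P_1}\cdots U_{P_s}V_{P_1}\cdots V_{P_s}$, which lie in $S_k\((Q')^2,\nu_\eta^{-1}\)$ and $S_k\(Q^2,\nu_\eta^{-1}\)$ --- level prime to $6\ell$ --- because the condition modulo $\ell$ is already encoded in $f_{\ell,\delta}$ at level one and the conditions at the $Q_i$ and $P_j$ are quadratic-residue conditions rather than single classes. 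Your remark that Radu's theorem ``lets one move freely between representatives'' is precisely the missing step, but it must be used to redefine $g$ and $h$ as square-class projections before the $U_q$-injectivity argument can be run.
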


\begin{proof}  
For the non-obvious direction, suppose that there is a congruence \eqref{eq:cong1}.
Set 
\begin{gather*}
\delta:=\pmfrac{1-24\beta}\ell\in \{0, -1\}
\end{gather*}
 and let  $Q'=Q_1\cdots Q_t$ be the prime factorization of $Q'$.
For $i=1,\dots, t$ set
\begin{gather*}
\delta_i:=\pmfrac{24\beta-1}{Q_i}\in \{\pm1\}.
\end{gather*}

Suppose that  $f=\sum a(n)q^\frac n{24}\in S_k\(N, \nu_\eta^{-1}\)$.
For each $i$, define the operator 
\begin{gather*}
f\big|  B_i:=  \mfrac12\(f-f \big| U_{Q_i}V_{Q_i} + \delta_i \, f\otimes \chi_{Q_i}\).
\end{gather*}
 Then we have
\begin{gather*}
f\big|  B_i=\sum_{\pfrac n{Q_i}=\delta_i}a(n)q^\frac n{24}
\in S_k\(NQ_i^2, \nu_\eta^{-1}\).
\end{gather*}
Recall the definition \eqref{eq:fldeldef}, and define
\begin{gather*}
g:=f_{\ell, \delta}\big| B_{Q_1}\dots B_{Q_t}
\in S_k\((Q')^2, \nu_\eta^{-1}\)
\end{gather*}
(where the particular value of $k$ depends on $\delta$ and is unimportant in what follows).  We have
\begin{gather*}
g\equiv \sum_{\substack{\pfrac{-n}\ell=\delta\\ \pfrac{n}{Q_i}=\delta_i \,\forall i}} p\pmfrac{n+1}{24}q^\frac n{24}\pmod\ell.
\end{gather*}

After changing variables we find that 
\begin{gather*}
g\equiv \sum_{\beta'\in S_{\ell Q', \beta}}\sum_n p(\ell Q' n+\beta')q^{\ell Q' n+\beta'-\frac1{24}}\pmod\ell.
\end{gather*}
So by  Theorem~\ref{thm:raduclasses}, we have the congruence \eqref{eq:cong2} if and only if 
$g\equiv 0\pmod\ell$.

Let  $Q''=P_1\dots P_s$ be the prime factorization of $Q''$, and define 
\begin{gather*}
h:=g\big| U_{P_1}\dots U_{P_s}V_{P_1}\dots V_{P_s}\equiv \sum_{\substack{\pfrac{-n}\ell
= \delta\\ \pfrac{n}{Q_i}=\delta_i \,\forall i\\ P_j | n \,\forall j}} p\pmfrac{n+1}{24}q^\frac n{24}\pmod\ell .
\end{gather*}
Then 
\begin{gather*}
h\equiv \sum_{\beta'\in S_{\ell Q, \beta}}\sum_n p(\ell Q n+\beta')q^{\ell Q n+\beta'-\frac1{24}}\pmod\ell,
\end{gather*}
so we have the congruence \eqref{eq:cong1} if and only if $h\equiv 0\pmod\ell$.

Proposition~\ref{prop:killsillyQ}  therefore follows from the next lemma, applied successively with the primes $P_1, \dots, P_s$.
\end{proof}

\begin{lemma}\label{lem:uq}
Suppose that $\ell\geq 5$ is prime, that   $(r, 24)=1$, that  $(N, 6\ell)=1$,   that $Q$ is a prime with $(Q, 6N\ell)=1$, and that $\chi$ is a Dirichlet character modulo $N$.
  Suppose that  $f\in M_k\(N, \chi\nu_\eta^r\)$.
Then 
\begin{gather*}
f\big|U_Q\not\equiv 0\pmod\ell\iff f\not\equiv 0\pmod\ell.
\end{gather*}
\end{lemma}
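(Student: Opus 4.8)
The plan is to prove the nontrivial implication: if $f \in M_k(N, \chi \nu_\eta^r) \setminus \{0\} \pmod \ell$ then $f \big| U_Q \not\equiv 0 \pmod \ell$. The obvious direction is immediate since $U_Q$ is a linear operator sending $\sum a(n) q^{n/24}$ to $\sum a(Qn) q^{n/24}$, so if $f \big| U_Q \not\equiv 0$ then some $a(Qn) \not\equiv 0$, whence $f \not\equiv 0$. For the converse, the natural strategy is to pass to a geometric statement via the $q$-expansion principle and then use that $Q$ is prime to the level and to $\ell$. First I would recall from~\eqref{eq:uqaction} that $U_Q : M_k(N, \chi \nu_\eta^r) \to M_k(N Q/(N,Q), \chi \chi_Q \nu_\eta^{Qr}) = M_k(NQ, \chi\chi_Q\nu_\eta^{Qr})$ since $(Q,N)=1$; in particular $f \big| U_Q$ is a genuine modular form of a predictable level, so it suffices to understand its vanishing modulo $\ell$.

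The key idea is that on the $q$-expansion side, $f \big| U_Q \equiv 0 \pmod \ell$ says precisely that $a(n) \equiv 0 \pmod \ell$ for all $n$ divisible by $Q$. Combined with the companion operator — twisting by $\chi_Q$ and using $V_Q$ — one recovers the full form $f$ modulo $\ell$ from $f \big| U_Q$ together with $f \big| U_Q V_Q$ and $f \otimes \chi_Q$. Concretely, I would use the standard identity, valid since $(Q, 6N\ell)=1$, that
\begin{gather*}
f \equiv f\big|U_Q\big|V_Q + \bigl(f - f\big|U_Q\big|V_Q\bigr) \pmod \ell,
\end{gather*}
where the second summand is supported on indices prime to $Q$ and can be written in terms of $f \otimes \chi_Q^{\,2}$-type projections; more cleanly, the operator $\frac{1}{2}\bigl(f - f\otimes\chi_Q^{(0)} \bigr)$ splitting into the $Q \mid n$ and $Q \nmid n$ parts. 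Since $f\big|U_Q \equiv 0$ forces the $Q \mid n$ part to vanish, we are reduced to showing that a form supported entirely on indices coprime to $Q$, with the given multiplier, cannot be a nonzero form modulo $\ell$ — but that is false in general, so this crude splitting is not enough by itself.

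The cleaner route, and the one I would actually carry out, is to invoke Lemma~\ref{la:delignerap} (the half-integral-weight $q$-expansion principle) applied at a cusp where $U_Q$-vanishing propagates. Specifically, I would argue that $f \big| U_Q \equiv 0 \pmod \ell$ implies $f \sk W_Q \equiv 0 \pmod\ell$ up to a normalization — because applying the Atkin--Lehner/Fricke involution $W_Q$ at the prime $Q$ interchanges the behavior at the cusps $0$ and $\infty$, and $U_Q$ followed by $W_Q$ recovers $f$ itself twisted by a unit (the relevant Gauss sum $G(Q) = \ep_Q \sqrt Q$ is an $\ell$-adic unit since $Q \ne \ell$). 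Then \eqref{eq:qexp} of Lemma~\ref{la:delignerap}, with $M = Q$ coprime to $\ell$, converts $f \sk W_Q \equiv 0$ back into $f \equiv 0 \pmod \ell$. The main obstacle will be bookkeeping the multiplier systems and the exact unit constants: I must check that $f \big| U_Q$ and $f \sk W_Q$ differ by an explicit factor that is a unit at $\ell$ (this uses $(Q, 6N\ell)=1$ crucially, and the Gauss-sum evaluation~\eqref{eq:gausssum} together with~\eqref{eq:nueta}), and that $f\big|U_Q$ lies in a space of forms on $\Gamma(N')$ with $N'$ coprime to $\ell$ so that Lemma~\ref{la:delignerap} applies with $\gamma$ trivial. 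Once these normalizations are pinned down the equivalence is immediate.
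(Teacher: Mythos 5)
Your overall strategy is the same as the paper's (slash $f\big|U_Q$ with a Fricke involution, then invoke the $q$-expansion principle of Lemma~\ref{la:delignerap} twice), but the central identity you assert is not correct, and the argument does not close without repairing it. You claim that ``$U_Q$ followed by $W_Q$ recovers $f$ itself twisted by a unit,'' so that $f\big|U_Q\equiv 0\pmod\ell$ would directly give $f\sk W_Q\equiv 0\pmod\ell$. That is false: writing the coset decomposition of Lemma~\ref{lem:uQ} and conjugating through the Fricke involution (at level $QN_1$ with $N_1=576N$ after passing to $f\big|V_{24}$, not at $Q$ alone), one finds
\begin{gather*}
f\big|U_Q\sk W_{QN_1}
= c_1\,\big(f\sk W_{N_1}\big)\otimes\chi_Q \;+\; c_2\,\big(f\sk W_{N_1}\big)\big|V_{Q^2}
\end{gather*}
with $c_1$, $c_2$ explicit $\ell$-adic units (involving $G(Q)=\ep_Q\sqrt Q$ and powers of $Q$). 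This is a sum of two terms, not a unit multiple of a single slash of $f$; the $v=0$ coset contributes the $V_{Q^2}$ piece and the $v\not\equiv 0$ cosets assemble into the twist. The step that actually finishes the proof — and which is absent from your sketch — is the observation that these two pieces have disjoint Fourier support (the twist is supported on exponents coprime to $Q$, the $V_{Q^2}$ piece on exponents divisible by $Q^2$), so the vanishing of the sum modulo $\ell$ forces each piece to vanish separately; the vanishing of the $V_{Q^2}$ piece gives $f\sk W_{N_1}\equiv 0\pmod\ell$, and a second application of~\eqref{eq:qexp} gives $f\equiv 0\pmod\ell$.

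Two smaller points. Your middle paragraph (the $U_QV_Q$ splitting) is a dead end, as you yourself note, and contributes nothing to the proof. And the involution you need is $W_{QN_1}$ on the level-$QN_1$ form $f\big|U_Q\big|V_{24}$, not $W_Q$; this matters because~\eqref{eq:qexp} is applied at the full level, which is coprime to $\ell$ by the hypotheses $(N,6\ell)=1$ and $(Q,6N\ell)=1$. With the two-term identity and the disjoint-support argument supplied, your outline becomes the paper's proof.
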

\begin{proof}[Proof of Lemma~\ref{lem:uq}]
It suffices to prove the assertion for the modular form $f\big|V_{24}$.
Using~\eqref{eq:etamultv24} and setting $N_1=576N$,  we will  assume for simplicity in the proof that
\begin{gather*}
  f \in 
  M_k\(N_1, \ptfrac{12}{\bullet}\chi \nu_\theta^r\)
\tx{.}
\end{gather*}
Only one direction requires proof.
Since $f$ has a Fourier expansion in integral powers of $q$, we have 
\begin{gather}\label{eq:lem1}
f\big|U_Q\sk W_{QN_1}=Q^{\frac k2-1}\sum_{v\spmod Q}f\sk\left[\pmatrix{vQN_1}{-1}{Q^2N_1}0, Q^\frac12N_1^\frac14\tau^\frac12\right].
\end{gather}
Write 
\begin{gather*}
g:=f\sk W_{N_1}=\sum b(n)q^n
\end{gather*}
 for the image under the Fricke involution~$W_{N_1}$. 
The term with $v=0$ in \eqref{eq:lem1} gives 
\begin{gather}\label{eq:lem2}
Q^{\frac k2-1}f\sk W_{N_1}\left[\pmatrix {Q^2}001, Q^{-\frac12}\right] =Q^{\frac{3k}2-1} g\big|V_{Q^2}.
\end{gather}
For each  $v\not\equiv 0\pmod Q$ in \eqref{eq:lem1} we choose $\alpha$ with 
\begin{gather*}
 \alpha v N_1\equiv 1\pmod Q.
\end{gather*}
Then the terms in \eqref{eq:lem1} with $v\not\equiv 0\pmod Q$  contribute
\begin{gather*}
Q^{\frac k2-1}\sum_{v\spmod Q^*}f\sk W_{N_1} \pmatrix{Q}\alpha{-vN_1}{\frac{1-\alpha v N_1}Q}^*\left[ \pmatrix {Q}{-\alpha}0{Q}, 1\right].
\end{gather*}
To compute the automorphy factors here and below it is helpful to use the facts that
\begin{gather*}
\ptfrac zw^\frac12=\tfrac{z^\frac12}{w^\frac12}, \ \ \ \ \  (-z)^\frac12=-iz^\frac 12\ \ \ \tx{for $z$, $w\in \H$}.
\end{gather*}
Since
\begin{gather*}
  W_{N_1}
  \left(\begin{smallmatrix}
  Q & \alpha \\
  -vN_1 & \frac{1 -  \alpha v N_1}{Q}
  \end{smallmatrix}\right)^*
=
  \left(\begin{smallmatrix}
  \frac{1 -  \alpha v N_1}{Q} & v \\
  - N_1 \alpha & Q
  \end{smallmatrix}\right)^*W_{N_1}
\end{gather*}
and $r$ is odd, this simplifies to
\begin{multline*}
  Q^{\frac{k}{2}-1}\chi(Q)\pmfrac{12N_1 }{Q}\ep_Q^{-r}
  \sum_{v \,(Q)^\ast}
  \pmfrac{- \alpha}{Q}\,
  g\big( \tau - \mfrac{\alpha}{Q} \big)
\\
=
  Q^{\frac{k}{2}-1}\chi(Q)
  \pmfrac{12N_1 }{Q}\ep_Q^{-r}\,
  \sum_n b(n) q^n
  \sum_{v \,(Q)^\ast}
  \pmfrac{-\alpha}{Q}
  \zeta_Q^{-n \alpha}
=
  Q^{\frac{k}{2}-1}\chi(Q)
  \pmfrac{12N_1}{Q} \ep_Q^{-r}
  G(Q) \,
  g \otimes \chi_Q
\tx{.}
\end{multline*}

Summarizing, we have
\begin{gather*}
  f \big| U_Q \sk W_{QN_1}
=
  Q^{\frac{k}{2}-1}\chi(Q)
  \pmfrac{12N_1}{Q} \ep_Q^{-r}
  G(Q) \,
  g \otimes \chi_Q
  +
  Q^{\frac{3k}{2}-1} g \big| V_{Q^2}
\tx{.}
\end{gather*}
 By~\eqref{eq:qexp}, we have $f|U_Q\equiv 0 \pmod\ell$ only if this expression is zero modulo $\ell$, which can happen only if  $g\equiv 0\pmod\ell$. By another application of \eqref{eq:qexp}, this can occur only if $f\equiv 0 \pmod\ell$.
\end{proof}

The second assertion of Theorem~\ref{thm:UQVQ}  
 follows from the next result.
\begin{theorem}\label{thm:theorem1full} 
Suppose that $\ell\geq 5$ is prime and that $Q$ is a squarefree positive integer with $(Q, 6\ell)=1$
and prime factorization $Q=Q_1\dots Q_t$.
Suppose that $\beta_0\in \Z$ has 
\begin{gather*}
(Q, 24\beta_0-1)=1
\end{gather*}
and that there is a congruence
\begin{gather*}
p(\ell Q n + \beta_0) \equiv 0 \pmod\ell.
\end{gather*}
Define 
\begin{gather*}
\delta:=\pmfrac{1-24 \beta_0}{\ell}\in\{0, -1\},
\end{gather*}
and for   $d \mid Q$ define
\begin{gather}\label{eq:lambdadef}
\lambda_d := d^{-1} \pmfrac{-12}{d}  \pmfrac{24\beta_0 - 1 }{d}.
\end{gather}
Then we have 
\begin{gather}\label{eq:felldelQt}
f_{\ell,\delta} \big| \left(U_{Q_1} + \lambda_{Q_1} V_{Q_1} \right) \cdots \left(   U_{Q_t} + \lambda_{Q_t}V_{Q_t} \right)  \equiv 0 \pmod\ell .
\end{gather}
\end{theorem}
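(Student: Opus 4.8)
The plan is to translate the congruence $p(\ell Q n + \beta_0) \equiv 0 \pmod\ell$ into a statement about the $q$-expansion of an explicit weakly holomorphic modular form at a well-chosen cusp, and then to read off~\eqref{eq:felldelQt} by matching Fourier coefficients. First I would introduce the weakly holomorphic form $g_{\beta_0} = q^{\frac{24\beta_0 - 1}{24\ell Q}} \sum_n p(\ell Q n + \beta_0) q^n$ naturally attached to the arithmetic progression $\ell Q n + \beta_0$; up to a power of $q$ and an application of $U_{\ell Q}$, this is the piece of $1/\eta$ supported on residues $\equiv \beta_0 \pmod{\ell Q}$. The congruence hypothesis says precisely that $g_{\beta_0} \equiv 0 \pmod\ell$. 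By Theorem~\ref{thm:raduclasses}, the same vanishing holds for every $\beta'$ in the square class $S_{\ell Q, \beta_0}$, and since $(Q, 24\beta_0 - 1) = 1$ the classes at the primes $Q_i$ range over both nonzero square classes while the class at $\ell$ is fixed by $\delta$.

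Next I would realize $f_{\ell,\delta}$ and its image under the operators $U_{Q_i}, V_{Q_i}$ as forms on $\Gamma(N)$ for a suitable level $N$ with $\ell \| N$, applying $V_{24}$ as in~\eqref{eq:etamultv24} so that Lemma~\ref{la:delignerap} and the $q$-expansion principle (Proposition~\ref{prop:delignerap}) become available. The heart of the computation is to expand $f_{\ell,\delta}$ (equivalently the relevant restriction of $1/\eta$) at the cusp of $X(\ell Q)$ corresponding to the matrix $W_{\ell Q}$ (or an appropriate $W_M$), using Radu's description of $\eta$-expansions along square classes at cusps of $X(\ell Q)$ together with the explicit eta multiplier~\eqref{eq:nueta} and the Gauss sum evaluation~\eqref{eq:gausssum}. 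Each factor $U_{Q_i} + \lambda_{Q_i} V_{Q_i}$, when conjugated to this cusp via $W_M$, should collapse — because of the precise value of $\lambda_{Q_i}$ in~\eqref{eq:lambdadef}, which is engineered so that the $U$-contribution and the $\lambda_{Q_i} V$-contribution to the expansion at the $W_M$-translate add up to a single square-class projection rather than cancelling. Concretely, one shows that $f_{\ell,\delta}|(U_{Q_i} + \lambda_{Q_i}V_{Q_i})$, transported to the cusp, picks out exactly the coefficients $p\pmfrac{n+1}{24}$ with $\pfrac{n}{Q_i}$ in the square class forced by $\beta_0$; iterating over $i = 1, \dots, t$ assembles the sum $\sum_{\pfrac{-n}{\ell}=\delta,\ \pfrac{n}{Q_i}=\delta_i\ \forall i} p\pmfrac{n+1}{24} q^{n/24}$, which after a change of variables is a constant times $g_{\beta_0}$ summed over the square class $S_{\ell Q, \beta_0}$. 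That sum vanishes mod $\ell$ by the congruence hypothesis and Theorem~\ref{thm:raduclasses}, so by~\eqref{eq:la:delignerap} and~\eqref{eq:qexp} the form $f_{\ell,\delta}|(U_{Q_1}+\lambda_{Q_1}V_{Q_1})\cdots(U_{Q_t}+\lambda_{Q_t}V_{Q_t})$ itself vanishes mod $\ell$, which is~\eqref{eq:felldelQt}.

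The main obstacle I anticipate is the cusp-expansion bookkeeping: one must pin down, with correct normalizations, how $U_{Q_i}$ and $V_{Q_i}$ act on the $q$-expansion of a half-integral weight eta-quotient after conjugation by $W_M$, keeping careful track of the multiplier systems~\eqref{eq:nueta}, the factors $\ep_d$, $\pfrac{12}{\bullet}$, and the Gauss/Kloosterman-type sums that arise — this is the step that forces the exact formula~\eqref{eq:lambdadef} for $\lambda_d$ and where a sign or a factor of $Q^{\pm 1}$ going astray would break the argument. A secondary technical point is ensuring all forms in sight have algebraic integral Fourier coefficients and live on a common $\Gamma(N)$ with $\ell \| N$, so that Lemma~\ref{la:delignerap} applies verbatim; this is routine given $(Q, 6\ell) = 1$ but must be stated. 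Once the cusp expansion of the single operator $U_Q + \lambda_Q V_Q$ is understood for a prime $Q$, the multiplicativity of the square-class projections makes the $t$-fold iteration formal, so I would isolate the prime case as the key computational lemma and then induct.
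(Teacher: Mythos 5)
Your overall plan coincides with the paper's: invoke Theorem~\ref{thm:raduclasses} to propagate the congruence over the square class $S_{\ell Q,\beta_0}$, pass each $g(\ell Q,\beta,\tau)$ to a cusp of $X(\ell Q)$ via the $q$-expansion principle (Lemma~\ref{la:delignerap}), use Radu's explicit transformation formula there, and let Gauss-sum evaluations produce the constants $\lambda_d$. However, the mechanism you articulate for the key step is not right. You assert that $f_{\ell,\delta}\big|(U_{Q_i}+\lambda_{Q_i}V_{Q_i})$, transported to the cusp, ``picks out exactly the coefficients with $\pfrac{n}{Q_i}$ in the square class forced by $\beta_0$.'' It does not: since $f\big|(U_Q+\lambda_Q V_Q)=\sum\bigl(a(Qn)+\lambda_Q a(n/Q)\bigr)q^{n/24}$, the vanishing of this form is a recursion along the $Q$-adic tower ($a(Qn)\equiv-\lambda_Q a(n/Q)$), and the coefficients with $Q\nmid n$ never enter. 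The genuine square-class projections are the operators $\tfrac12\bigl(f-f\big|U_{Q_i}V_{Q_i}+\delta_i\,f\otimes\chi_{Q_i}\bigr)$ of Proposition~\ref{prop:killsillyQ}, a different object. In the actual argument the square-class structure enters only through the summation of the hypothesis over $\beta$ and through the divisor sum $\sum_{d\mid Q}$ in Proposition~\ref{prop:RaduTransformation}; the $\lambda_d$ arise from evaluating the Salie sums $T(n,d)$ as Gauss sums under the hypothesis $(24\beta-1,Q)=1$.

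The second, related gap is your proposed reduction to ``the prime case'' followed by induction. The conclusion~\eqref{eq:felldelQt} is that the \emph{product} of the $t$ factors annihilates $f_{\ell,\delta}$ modulo $\ell$, not that each factor does; the single-prime statement $f_{\ell,\delta}\big|(U_{Q_i}+\lambda_{Q_i}V_{Q_i})\equiv 0$ would require a congruence modulo $\ell Q_i$, which the hypothesis (a congruence modulo $\ell Q$) does not supply, and is false in general. What actually happens is that the part of the cusp expansion supported on exponents in $\frac{1}{24\ell}\Z$ — a separation you do not mention but which is essential, since the remaining exponents contribute a second, independent vanishing condition — yields the divisor-sum identity $\sum_{d\mid Q}\lambda_d\, f_{\ell,\delta}\big|U_{Q/d}V_d\equiv 0\pmod\ell$ in a single stroke (after, for $\delta=-1$, summing over a set $S'$ of representatives $\beta$ because the residues $t_{d,\beta}$ vary with $d$ and $\beta$). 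The product form is then the purely algebraic identity $\prod_i(U_{Q_i}+\lambda_{Q_i}V_{Q_i})=\sum_{d\mid Q}\lambda_d\,U_{Q/d}V_d$, using multiplicativity of $\lambda_d$ and commutativity of the operators at distinct primes; no induction on the primes $Q_i$ is needed or available.
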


To prove Theorem~\ref{thm:theorem1full}, we require another result of Radu \cite{Radu_aoconj}.
Suppose that $m$ is a positive integer with $(m,6)=1$, and that $\beta\in \Z$.
As in \cite[Lemma~4.11]{Radu_aoconj}, define 
\begin{gather}\label{eq:gmdef}
g(m,\beta,\tau) := q^\frac{24\beta-1}{24m}\sum p (mn + \beta) q^n.
\end{gather}
If $\ell\geq 5$ is prime and $Q$ is a positive integer with~$(Q, 6\ell)=1$, we  choose integers $X$ and $Y$ with 
\begin{gather}\label{eq:Xcong}
576 \ell^2 X + QY = 1
\end{gather}
and define 
\begin{gather}\label{eq:gammadef}
\gamma_{\ell, Q}: = \pmatrix{1}{-24^2\ell X}{  \ell}{ QY}\in \SL_2({\Z}).
\end{gather}

The next proposition follows from Lemma~5.1 of~\cite{Radu_aoconj}.  Note that we have corrected a typographical error in that lemma which arises 
from dropping the  term $\pfrac{24\ell}{Q/d}$ in equation~(44).  The correct version, which we quote below, can also be found in equation (12) of  the preprint version 
available at the author's homepage.  Note also that for the values of $Q$ and $d$ below we have $(-1)^{\frac{Q+1}2\frac{d-1}2}=(-1)^{\frac{Qd-1}2\frac{d-1}2}$.
\begin{proposition}\label{prop:RaduTransformation}
Suppose that  $\ell\geq 5$ is prime, that $Q$ is a positive integer with $(Q, 6\ell)=1$, and that $\beta\in \Z$.
Let $\gamma_{\ell, Q}$ be defined as in \eqref{eq:gammadef}.  Then

\begin{gather}\label{eq:Radutransformation}
\begin{aligned}
&
  Q
  e \left( -\mfrac{ \pi i Q} {12} \right) e \left( - \mfrac{48 \pi i X (24 \beta -1)}Q \right)
  (\ell \tau + QY)^{\frac{1}{2}}
  g(\ell Q, \beta, \gamma_{\ell, Q} \tau)
\\
={}&
  \sum_{d \mid Q}
  d^{-\frac{1}{2}}
  e\left( \mfrac{1-d}{8} \right)
  \pmfrac{24 \ell}{Q/d}
 (-1)^{\frac{Qd-1}{2} \frac{d-1}{2}}
  q^{\frac{(24 t_{d, \beta} -1) d^2}{24  \ell Q}}
  \sum_{n=0}^{\infty}
  q^{\frac{nd^2}{Q}}
  p(\ell n + t_{d, \beta})
  T(n,d)
\tx{,}
\end{aligned}
\end{gather} 
where
  $t_{d, \beta}$ is the integer satisfying  $0 \le t_{d, \beta} \le \ell -1$ and 
\begin{gather}\label{eq:tddef}
d^2(24 t_{d, \beta} -1) \equiv 24 \beta - 1 \pmod{\ell},
\end{gather}
$\overline{s}$ is any integer such that $s \overline{s} \equiv 1 \pmod{Q/d}$,
and
\begin{gather*}
T(n,d):= \sum_{ s \spmod{Q/d} }  \pmfrac{24 \ell s}{Q/d} e \left( \mfrac{-48 \pi i X}{Q/d} \big( \overline{s} (24 (\ell n +t_{d, \beta}) - 1) + s(24\beta-1)\big) \right).
\end{gather*}
\end{proposition}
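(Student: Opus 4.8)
The plan is to obtain~\eqref{eq:Radutransformation} from the transformation law~\eqref{eq:etamult} of $\eta$ by the dissection argument of Radu; indeed the displayed formula is, after correcting one typographical slip, exactly~\cite[Lemma~5.1]{Radu_aoconj}, so the work consists in recalling that derivation and then verifying the elementary sign identity needed to match normalizations. Since $1/\eta(\tau) = \sum_{n\ge 0} p(n) q^{n - 1/24}$, extracting the arithmetic progression $n\equiv\beta\pmod{\ell Q}$ of its Fourier expansion and using orthogonality of additive characters expresses $g(\ell Q, \beta, \tau)$ as an explicit finite $\overline{\Q}$-linear combination of translates $\tau \mapsto \eta\big(\tfrac{\tau + 24 j}{24 \ell Q}\big)^{-1}$, the overall power $q^{(24\beta-1)/24\ell Q}$ being absorbed into the normalization of $g$.

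Next I would apply $\gamma_{\ell, Q}$ to each translate. Each such function is $\eta^{-1}$ evaluated at $A_j \tau$ for an explicit $A_j \in \GL_2^{+}(\Q)$, and one factors $A_j \gamma_{\ell, Q}$ as $\gamma' B$ with $\gamma' \in \SL_2(\Z)$ and $B$ upper triangular; here the congruence~\eqref{eq:Xcong} on $X$ and $Y$ (equivalently, $\det \gamma_{\ell,Q} = 1$) is precisely what makes $\gamma'$ integral. Evaluating the transformation of $\eta^{-1}$ under $\gamma'$ through the explicit multiplier~\eqref{eq:nueta}, and then regrouping the resulting translates according to which denominator $d \mid Q$ survives after reduction, the coefficient of $q^{n d^2 / Q}$ collapses to the exponential sum $T(n,d)$, the accompanying constant assembles via~\eqref{eq:oneminusd-epd1d2} and~\eqref{eq:gausssum} into $d^{-1/2} e^{\pi i (1-d)/4} \pfrac{24\ell}{Q/d} (-1)^{\frac{Qd-1}{2}\frac{d-1}{2}}$, and the relevant $q$-power is the one indexed by the residue $t_{d,\beta}$ forced by~\eqref{eq:tddef}.

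The main obstacle is the bookkeeping of the roots of unity and quadratic residue symbols that appear along the various cosets; it is very easy to mislay the Legendre factor $\pfrac{24\ell}{Q/d}$, which is exactly the typographical error in~\cite[eq.~(44)]{Radu_aoconj} that the statement above corrects and which is present in equation~(12) of the preprint version. Finally, to reconcile our normalization with Radu's one checks the identity $(-1)^{\frac{Q+1}{2}\frac{d-1}{2}} = (-1)^{\frac{Qd-1}{2}\frac{d-1}{2}}$ for odd $Q$ and $d \mid Q$: since $Q$ is odd we have $\tfrac{Qd-1}{2} - \tfrac{Q+1}{2} = \tfrac{Q(d-1)}{2} - 1 \equiv \tfrac{d-1}{2} - 1 \pmod 2$, whence the difference of the two exponents is $\equiv \big(\tfrac{d-1}{2}-1\big)\tfrac{d-1}{2} = \big(\tfrac{d-1}{2}\big)^{2} - \tfrac{d-1}{2} \equiv 0 \pmod 2$, so the two signs coincide.
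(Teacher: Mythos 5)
Your proposal matches the paper's treatment: the proposition is quoted from Radu's Lemma~5.1 with the missing factor $\pfrac{24\ell}{Q/d}$ restored, and the only verification actually supplied in either place is the sign identity $(-1)^{\frac{Q+1}{2}\frac{d-1}{2}}=(-1)^{\frac{Qd-1}{2}\frac{d-1}{2}}$ for odd $Q$ and $d\mid Q$, which you check correctly via $m^2-m\equiv 0\pmod 2$. One small slip of attribution: equation~(12) of Radu's preprint contains the \emph{correct} version (it is the published equation~(44) that drops the Legendre factor), so your sentence locating the error in the preprint should be reversed.
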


Note that the definition of $T(n, d)$ depends implicitly on $Q$ and that we have $T(n, Q)=1$ for all $n$ (since $\pfrac01=1$).
Define the  Salie sum by
\begin{gather}\label{eq:salie}
S(a,b,c) := \sum_{n\spmod c } \left( \frac{n}{c} \right) e\pmfrac{an + b\overline{n}}{c}.
\end{gather}
Replacing~$- \overline{24} \overline{\ell}^2 s$ by~$s$ and using \eqref{eq:Xcong}, we see that 
\begin{gather}\label{eq:tndsalie}
\begin{aligned}
  T(n,d)
&{}=
  \pmfrac{- \ell}{Q/d}
  \sum_{s \spmod{Q/d} }
  \pmfrac{s}{Q/d} e\left( \mfrac{2 \pi i}{Q/d} \big( \overline{24}^2 \overline{\ell}^4 \overline{s} (24 (\ell n +t_{d, \beta}) - 1) + s (24\beta-1)\big) \right)
\\
&{}=
  \pmfrac{-\ell}{Q/d}
  S\(24\beta - 1, \overline{24}^2 \overline{\ell}^4 (24(\ell n + t_{d, \beta}) - 1), Q/d\)
\tx{.}
\end{aligned}
\end{gather}

\begin{proof}[Proof of Theorem~\ref{thm:theorem1full}]
Let $\ell$ and $Q=Q_1 \dots Q_t$ be  as in the statement of the theorem, and suppose that there is a congruence  
\begin{gather*}
p(\ell Qn + \beta_0) \equiv 0 \pmod\ell\ \ \ \text{with} \ \ \ \pmfrac{1-24 \beta_0}{\ell}=\delta\in \{0, -1\}.
\end{gather*}
 By Theorem~\ref{thm:raduclasses} we have congruences
 \begin{gather}\label{eq:allbeta}
 p(\ell Q n + \beta)\equiv 0 \pmod{\ell}\ \ \text{for all}\ \ \beta \in S_{\ell Q,\beta_0}.
 \end{gather}
Suppose that $\beta \in S_{\ell Q,\beta_0}$. 
By equations (17) and (18) of \cite{Radu_aoconj},  there exists a positive integer $h$ such that 
$g(\ell Q, \beta,  \tau)^h\in M^!_\frac h2 (\Gamma(\ell Q))$.
Applying Lemma~\ref{la:delignerap} to $g(\ell Q, \beta,  \tau) \equiv 0 \pmod{\ell}$, we conclude that
\begin{gather}\label{eq:giszero}
(\ell \tau + YQ)^{\frac{1}{2}}  g(\ell Q, \beta, \gamma_{\ell, Q} \tau) \equiv 0 \pmod{\ell}.
\end{gather}
Therefore, the expression on the right side of \eqref{eq:Radutransformation} is $0 \pmod{\ell}$.
We write this expression in the form 
\begin{gather*}
\sum_{r \in \frac{1}{24\ell}\mathbb{Z}} a(r) q^r + \sum_{r \in \Q \backslash \frac{1}{24\ell}\mathbb{Z}} a(r) q^r,
\end{gather*}
and note that each of these summands is $0 \pmod\ell$.
Define
\begin{gather*}
  F_{ \ell, Q, \beta} := \sum_{r \in \frac{1}{24\ell}\Z} a(r) q^{ r} \equiv 0 \pmod{\ell}
\tx{.}
\end{gather*}
  
The theorem will follow from computing each $F_{ \ell, Q, \beta}$ explicitly.
In the term arising from the divisor $d$ in~\eqref{eq:Radutransformation}, the exponents have the form
\begin{gather}
\label{eq:Fbetaexponents}
r=\mfrac{nd^2}{Q} + \mfrac{(24 t_{d, \beta} - 1)d^2}{24 \ell Q} = \mfrac{d(24 (\ell n + t_{d, \beta}) - 1)}{24 \ell Q/d}.
\end{gather}
Since $(d,Q/d) = 1$, it follows that 
 $F_{ \ell, Q, \beta}$ is the sum of those terms in \eqref{eq:Radutransformation} with
 \begin{gather}\label{eq:fbetaterms}
 24 (\ell n + t_{d, \beta}) - 1 \equiv 0 \pmod{Q/d}. 
\end{gather}
To compute $F_{ \ell, Q, \beta}$, we may  therefore assume that  \eqref{eq:fbetaterms} holds.
Since $(24\beta-1, Q)=1$,  \eqref{eq:tndsalie} gives 
\begin{gather*}
T(n,d)= \pmfrac{ -\ell }{Q/d}   \pmfrac{24\beta - 1}{Q/d}  G\(Q/d\).
\end{gather*}
We  write
\begin{gather}\label{eq:fbetaddef}
F_{ \ell, Q, \beta} = \sum_{d \mid Q} F_{\ell,Q,\beta,d}
\tx{,}
\end{gather}
where for each divisor $d$, $F_{\ell,Q,\beta,d}$ is the contribution   from those terms in~\eqref{eq:Radutransformation}
satisfying~\eqref{eq:fbetaterms}.
In other words, 
\begin{multline*}
 F_{\ell,Q,\beta,d}=
  d^{-\frac{1}{2}} e\pmfrac{1-d}8
 (-1)^{\frac{Qd-1}{2} \frac{d-1}{2}} G(Q/d)
   \pmfrac{-24(24\beta-1)}{Q/d}\\
 \cdot\, \sum_{24(\ell n+t_{d, \beta}) - 1 \equiv 0 \,(Q/d)}
  p(\ell n + t_{d, \beta})  q^{\frac{nd^2}{Q}+\frac{(24 t_{d, \beta} -1) d^2}{24 Q \ell}}.
\end{multline*}
From~\eqref{eq:oneminusd-epd1d2} and~\eqref{eq:gausssum},    we have the formula 
\begin{gather*}
e\pmfrac{1-d}8(-1)^{\frac{Qd-1}{2} \frac{d-1}{2}} G(Q/d)
=d^{-\frac12}\pmfrac2d G(Q),
\end{gather*}
and  from  \eqref{eq:pmtdef} we have
\begin{gather*}
\pmfrac{24\beta-1}d=\pmfrac{24\beta_0-1}d\ \ \text{for}\ \ d\mid Q.
\end{gather*}
Using these facts, replacing $\frac{24(\ell n+t_{d, \beta})-1}{Q/d}$ by $n$, and recalling the definition \eqref{eq:lambdadef}, we find that 
\begin{gather*}
 F_{\ell,Q,\beta,d}=
 G(Q) \pmfrac{-24 (24\beta_0-1)}{Q}\lambda_d
 \sum_{\frac{Q}{d}n\equiv 24t_{d, \beta} -1\spmod \ell}  p\pmfrac{\frac{Q}{d}n+1}{24} q^{\frac{nd}{24 \ell}},
\end{gather*}
which may be written in the form
\begin{gather}\label{eq:fbetafourier}
 F_{\ell,Q,\beta,d}\big|V_\ell 
=      G(Q) \pmfrac{-24 (24\beta_0-1)}{Q}   \lambda_d \left( \sum_{n \equiv 24 t_{d, \beta} -1 \spmod\ell} p  \pmfrac{  n + 1}{24}  q^{\frac{n}{24}} \right) \big| U_{Q/d} V_{d}.
\end{gather}

We are now in a position to prove Theorem~\ref{thm:theorem1full}.
Suppose first that   $\delta=\left( \frac{1-24\beta_0}{\ell} \right) =0$.
Then for every $\beta\in S_{\ell Q, \beta_0}$ we have $\ell\mid 24\beta-1$.
It follows from \eqref{eq:tddef} that for each $d$ and $\beta$  we have $t_{d, \beta}\equiv \beta_0\pmod\ell$, from which 
\begin{gather*}
 \sum_{n \equiv 24 t_{d, \beta} -1 \spmod\ell} p  \pmfrac{n + 1}{24} q^\frac n{24} = \sum_{\ell\mid n} p  \pmfrac{  n + 1}{24} q^\frac n{24}\equiv f_{\ell,0}\pmod\ell.		
\end{gather*}
Using~\eqref{eq:fbetaddef} and~\eqref{eq:fbetafourier} together with the fact that $F_{ \ell, Q, \beta} \equiv 0 \pmod{\ell}$, we find that 
\begin{gather*}
 \sum_{d \mid Q} \lambda_d  f_{\ell,0} \big|  U_{Q/d}  V_d \equiv  0 \pmod{\ell}\tx{. }
\end{gather*}
Factoring gives
\begin{gather*}
f_{\ell,0} \big|  \left( U_{Q_1} + \lambda_{Q_1}V_{Q_1} \right) \cdots \left(   U_{Q_t} + \lambda_{Q_t}V_{Q_t} \right) \equiv 0 \pmod{\ell}\tx{.}
\end{gather*}
So the theorem follows in this case.

Finally, suppose that~$\delta=\left(\frac{1-24\beta_0}{\ell} \right)= -1$.  In this case, the situation is complicated by the fact that the values of $t_{d, \beta}$ in \eqref{eq:fbetafourier} vary with $d$ and $\beta$. 
 To proceed, define 
 \begin{gather*}
 S':=\{\beta\in S_{\ell Q, \beta_0}: \beta\equiv \beta_0\pmod {Q}\}.
 \end{gather*}
Then $S'$ contains one representative 
for each residue class $\beta\pmod \ell$
with 
\begin{gather*}
\pmfrac{1-24\beta}\ell=\pmfrac{1-24\beta_0}\ell=-1.
\end{gather*}
For each $d$, we see by  \eqref{eq:tddef}     that $t_{d,\beta}$ ranges over those residue classes $t\pmod\ell$ 
with $\pfrac{1-24t}\ell=-1$ as $\beta$ ranges over
$S'$.
We conclude that  
\begin{gather*}
  f_{\ell,-1}
\equiv
  \sum_{\pfrac{-n}{\ell} = -1}
  p\pmfrac{n + 1}{24} q^\frac n{24}
\equiv
  \sum_{\beta \in S'}
  \sum_{n \equiv 24 t_{d,\beta} -1 \spmod\ell}
  p\pmfrac{n + 1}{24} q^\frac n{24}\pmod\ell
\tx{.}
\end{gather*}
Combining this with \eqref{eq:fbetaddef} and \eqref{eq:fbetafourier}, we obtain
\begin{gather*}
\sum_{\beta \in S'} F_{ \ell, Q, \beta}\big|V_\ell=
\sum_{ d \mid Q} \sum_{\beta \in S'} F_{\ell,Q,\beta,d}\big|V_\ell  
\equiv   G(Q) \pmfrac{-24 (24\beta_0-1)}{Q}  \sum_{d\mid Q}  \lambda_d f_{\ell,-1}\big| U_{Q/d} V_d\pmod\ell.
\end{gather*}
Since $F_{ \ell, Q, \beta}\equiv 0 \pmod{\ell}$ for each $\beta$ in the sum, we obtain
\begin{gather*}
 \sum_{d\mid  Q} \lambda_d f_{\ell,-1}\big| U_{Q/d} V_d \equiv 0 \pmod{\ell} \tx{.}
\end{gather*}
Factoring gives 
\begin{gather*}
f_{\ell,-1} \big|  \left( U_{Q_1} + \lambda_{Q_1}V_{Q_1} \right) \cdots \left(  U_{Q_t} + \lambda_{Q_t} V_{Q_t} \right) \equiv 0 \pmod{\ell} \tx{.}
\end{gather*}
This proves Theorem~\ref{thm:theorem1full}.
\end{proof}


\section{Proof of Theorem \ref{thm:main}}\label{sec:proofmain}

Theorem~\ref{thm:main} will follow from the next result together with Theorem~\ref{thm:UQVQ}.
\begin{theorem}\label{thm:upvpgen}
Suppose that $\ell\geq 5$ is prime, 
 that $(r, 24)=1$ and that $f=\sum a(n)q^\frac n{24}\in S_k\(1, \nu_\eta^r\)$
has $f\not\equiv 0\pmod\ell$.
Fix $\ep\in \{\pm 1\}$.  Let $S$ be the set of primes $p$ such that 
\begin{gather}\label{eq:upvpgen}
f\big| U_p\equiv c_p f\big| V_p\pmod\ell \ \ \ \ \text{for some $c_p\not\equiv 0\pmod\ell$}
\end{gather}
and
\begin{gather}\label{eq:twistgen}
f\equiv  \sum_{\pfrac np=\ep} a(n)q^\frac n{24} +\sum a(p^2n)q^\frac{p^2 n}{24}\pmod\ell.
\end{gather}
Then one of the following is true.
\begin{enumerate}
\item $S$ has density zero, or
\item we have 
\begin{gather*}
\#\{ n\leq X\ : a(n)\not\equiv 0\pmod \ell\} \ll \sqrt{X} \log X
\end{gather*}
and 
\begin{gather*}
f\big| T_{Q^2}\equiv 0\pmod\ell\ \ \text{for all primes}\ \  Q\equiv -1\pmod\ell.
\end{gather*}
\end{enumerate}
\end{theorem}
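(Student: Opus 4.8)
The plan is to assume throughout that $S$ does not have density zero and to deduce the two assertions in part~(2); one may clearly restrict to primes $p\ge 5$. First I would unwind the two hypotheses at such a prime $p\in S$. Applying $U_p$ and $V_p$ to \eqref{eq:upvpgen} and comparing Fourier coefficients by Lemma~\ref{lem:uQ} gives $f\big|U_{p^2}\equiv c_p f\pmod\ell$ and $f\big|U_pV_p\equiv c_p f\big|V_{p^2}\pmod\ell$, i.e.\ $a(n)\equiv 0\pmod\ell$ for all $n$ with $p\,\|\,n$ and $a(p^2m)\equiv c_p a(m)\pmod\ell$ for every $m$. In the presence of these, condition \eqref{eq:twistgen} is exactly the extra vanishing $a(n)\equiv 0\pmod\ell$ whenever $\pfrac np=-\ep$, and hence $f\otimes\chi_p\equiv\ep\bigl(f-c_p f\big|V_{p^2}\bigr)\pmod\ell$. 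Feeding these into the Hecke formula \eqref{eq:heckedef} and writing $\lambda_p:=p^{k-\frac32}\ptfrac{-1}p^{k-\frac12}\ptfrac{12}p$, everything collapses to
\[
 f\big|T_{p^2}\equiv(c_p+\ep\lambda_p)\,f+\bigl(p^{2k-2}-\ep\lambda_p c_p\bigr)\,f\big|V_{p^2}\pmod\ell
\]
for every $p\in S$; applying $U_{p^2}$ and using $V_{p^2}U_{p^2}=\mathrm{id}$ one also records $f\big|T_{p^2}U_{p^2}\equiv(c_p^2+p^{2k-2})\,f\pmod\ell$.

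Next I would pass to the integral-weight side. Since $f\not\equiv 0\pmod\ell$, iterating $a(p^2m)\equiv c_p a(m)$ shows $f$ has a coefficient prime to $\ell$ at some index coprime to $p$; so $f$ and $f\big|V_{p^2}$, the latter supported on indices divisible by $p^2$, are linearly independent mod $\ell$ unless $f\big|V_{p^2}\equiv 0\pmod\ell$, equivalently unless $f\equiv\sum_{(n/p)=\ep}a(n)q^{n/24}\pmod\ell$. This is the elementary horn of a dichotomy: if that last congruence holds for a set of primes of positive upper density, then the Fourier support of $f\bmod\ell$ lies in $\{n:\pfrac np=\ep\}$ for all those $p$, and picking indices $n_1,\dots,n_r$ in the support with multiplicatively independent squarefree parts would confine that set of primes to $\bigcap_i\{q:\pfrac{n_i}q=\ep\}$, of density $2^{-r}$ — impossible for large $r$. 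Hence the support of $f\bmod\ell$ meets only finitely many square classes, so $\#\{n\le X:a(n)\not\equiv0\}\ll\sqrt X$, $f$ is lacunary mod $\ell$, and (via the Shimura lifts $\Sh_t(f)$, which by \eqref{eq:shimliftcoeff} encode the coefficients of $f$ along square classes and lie in the fixed space \eqref{eq:shimliftimage}) the attached semisimple mod-$\ell$ representations $\bar\rho$ are reducible. In the complementary horn, $f\big|V_{p^2}\not\equiv 0$ for all but finitely many $p\in S$, and the displayed relations — transported to integral weight via \eqref{eq:shimliftcommute} and \eqref{eq:shimcong} — become constraints on $\Tr\bar\rho(\Frob_p)$ for the (finitely many) systems $\bar\rho$ occurring in the $\Sh_t(f)$, valid on a set of primes of positive upper density.

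The heart of the argument — and the step I expect to be hardest, matching the billing of Section~\ref{sec:proofmain} as the heart of the paper — is to upgrade this: a trace constraint of the above shape on a positive-upper-density set of Frobenii forces each $\bar\rho$ to be reducible. This is where the arithmetic large sieve enters: it supplies the effective Chebotarev-type counting needed to exclude the large-image and dihedral possibilities while knowing $S$ only up to positive \emph{upper} density, and, once reducibility is in hand, it also delivers the uniform bound $\#\{n\le X:a(n)\not\equiv0\}\ll\sqrt X\log X$ (improving the crude $\sqrt X$ from the elementary horn to something that holds at every $X$). It then remains to extract \eqref{eq:tq0} from reducibility. Writing $\bar\rho^{\mathrm{ss}}\cong\chi_1\oplus\chi_2$, note that $\Sh_t(f)$ has trivial nebentypus and weight $2k-1$, and $2k-2$ is odd because $(r,24)=1$ forces $k\notin\Z$; thus $\chi_1\chi_2=\omega^{2k-2}$ with $\omega$ the mod-$\ell$ cyclotomic character, and since the parts of $\chi_1,\chi_2$ ramified away from $\ell$ are quadratic and must cancel, $\chi_1\chi_2^{-1}$ is an \emph{odd} power of $\omega$, so $(\chi_1\chi_2^{-1})(-1)=-1$. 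Therefore $a_Q(\Sh_t(f))=\chi_1(Q)+\chi_2(Q)\equiv 0\pmod\ell$ for every prime $Q\equiv -1\pmod\ell$, and by \eqref{eq:shimliftcommute} and \eqref{eq:shimcong} this gives $f\big|T_{Q^2}\equiv 0\pmod\ell$ for all such $Q$.

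Beyond the reducibility step itself, the technical obstacle I anticipate is organizing the passage from $f$ to the finitely many Hecke eigensystems of its Shimura lifts — and from the identities of the first paragraph to clean statements about their mod-$\ell$ Galois representations — carefully enough that the Chebotarev/large-sieve comparison can be carried out uniformly in $Q$ (in particular handling the cases $\ep=\pm1$ symmetrically and controlling the eigensystems that do not see the constraint on $S$).
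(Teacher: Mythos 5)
There is a genuine gap, and it sits exactly where you flag it: the assertion that a trace constraint on a positive-upper-density set of Frobenii forces each $\bar\rho$ to be reducible is never proved, and it is not what the paper does. The paper never establishes (or needs) reducibility of the Galois representations attached to the Shimura lifts. Its actual engine is a dichotomy over \emph{all} primes $Q$, not just those in $S$: either there are infinitely many primes $Q$ admitting a coefficient $a(n_Q)\not\equiv 0\pmod\ell$ with $\operatorname{ord}_Q(n_Q)$ odd, or $f\big|U_{Q^{2j+1}}\equiv f\big|U_{Q^{2j+2}}V_Q\pmod\ell$ for all but finitely many $Q$. In the first case each such $Q_j$ imposes an independent quadratic condition $\pfrac{Q_jn'_{Q_j}}{p}=\ep$ on every $p\in S$ (via \eqref{eq:twistgen} and \eqref{eq:upvpgen}), forcing $S$ to have density zero directly -- no Galois theory. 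In the second case one has the identity $f\big|T_{Q^2}\equiv(1+Q^{-1})f\big|U_{Q^2}\pmod\ell$ at essentially \emph{every} prime $Q\not\equiv 1\pmod\ell$ (Proposition~\ref{prop:uquqsquare}, proved by an explicit $W_Q$/$q$-expansion-principle computation, not from \eqref{eq:twistgen}); the vanishing $f\big|T_{Q^2}\equiv 0$ for $Q\equiv-1\pmod\ell$ then falls out because $1+Q^{-1}\equiv 0$, and Chebotarev (Lemma~\ref{lem:chebotarev}) plus the large sieve applied to the quadratic conditions $\pfrac{n'}{Q}=\pfrac{n'}{Q_0}$ for $Q$ in a positive-density set $\mQ$ gives the $\sqrt X\log X$ bound. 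Your setup, which only extracts information at primes $p\in S$, cannot reach the conclusion $f\big|T_{Q^2}\equiv 0$ for all $Q\equiv-1\pmod\ell$, since such $Q$ need not lie in $S$; you try to bridge this with reducibility, and that bridge is missing. The constraints you do derive at $p\in S$ also involve the unknown constants $c_p$, so it is not even clear what fixed trace condition the large sieve would be asked to detect.

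A secondary problem: your ``elementary horn'' is vacuous and incorrectly set up. Since $V_{p^2}$ acts injectively on $q$-expansions, $f\big|V_{p^2}\equiv 0\pmod\ell$ if and only if $f\equiv 0\pmod\ell$, which is excluded by hypothesis; and the claimed equivalence with $f\equiv\sum_{\pfrac np=\ep}a(n)q^{n/24}$ is false (that congruence would instead say $f\big|U_{p^2}\equiv 0$, which again forces $f\equiv 0$ since $f\big|U_{p^2}\equiv c_pf$). So the dichotomy you propose collapses to its second branch, and the proof reduces entirely to the unproven reducibility claim. The computations in your first paragraph (the consequences $f\big|U_{p^2}\equiv c_pf$, the vanishing of $a(n)$ for $p\,\|\,n$ and for $\pfrac np=-\ep$, and the resulting formula for $f\big|T_{p^2}$ at $p\in S$) are correct and broadly consistent with the paper, but they are not the load-bearing part of the argument.
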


Assuming this result for the moment, we prove Theorem~\ref{thm:main}.
\begin{proof}[{Proof of Theorem~\ref{thm:main}}]
Recall the modular forms~$f_{\ell,\delta}$ from~\eqref{eq:fldeldef}. If $\ell \in \{5,7,11\}$ and~$\delta = 0$, we have~$f_{\ell,\delta} \equiv 0 \pmod{\ell}$ and both~\eqref{eq:al0} and~\eqref{eq:tq0} hold trivially. We exclude these cases from further consideration. In all other cases, we have~$f_{\ell, \delta} \not\equiv 0 \pmod{\ell}$ by the work cited after the statement of Theorem~\ref{thm:main}. 
Fix $\delta\in\{0, -1\}$.  For $\ep\in \{\pm1\}$, let   $S_\ep$ be the set of primes $p$ for which there is a congruence
\begin{gather*}
p(\ell p n+\beta)\equiv 0\pmod\ell\ \ \text{with}\ \ \pmfrac{1-24\beta}\ell=\delta,\ \ \pmfrac{24\beta-1}p=\ep.
\end{gather*}
In view of the first assertion of Theorem~\ref{thm:UQVQ}, we see that   $S=S_1\cup S_{-1}$ is the set described in the statement of Theorem~\ref{thm:main}.

Suppose that $p\in S_\ep$.
Then 
\begin{gather*}
\sum_{\pfrac np=\ep} a_{\ell, \delta}(n)q^\frac n{24}\equiv 0\pmod\ell,
\end{gather*}
and by Theorem~\ref{thm:UQVQ} we have 
\begin{gather*}
f_{\ell, \delta}\big| U_p\equiv -\ep \pfrac{-12}pp^{-1} f_{\ell, \delta}\big| V_p\pmod\ell.
\end{gather*}
Using this fact together with  Theorem~\ref{thm:raduclasses} we see that 
\begin{gather*}
f_{\ell, \delta}=\sum a_{\ell, \delta}(n)q^\frac n{24}\equiv \sum_{\pfrac np=-\ep} a_{\ell, \delta}(n)q^\frac n{24} +\sum a_{\ell, \delta}(p^2n)q^\frac{p^2 n}{24}\pmod\ell.
\end{gather*}
Suppose that  $S$ does not have density zero. Then the same is true of $S_\ep$ for some choice of~$\ep$, and Theorem~\ref{thm:main} follows
from applying Theorem~\ref{thm:upvpgen} to $f_{\ell, \delta}$.
\end{proof}

We  turn to the proof of Theorem~\ref{thm:upvpgen}. Let $f$ and $S$ be as in the hypotheses.
 If $Q$ is a prime with $Q\geq 5$  and $Q\neq \ell$ then one of two mutually exclusive things must occur.
Either
\begin{gather}\label{eq:ordQeven} 
\text{there exists an integer $n_Q$ with $\operatorname{ord}_Q(n_Q)$\  odd and $a(n_Q)\not\equiv 0\pmod\ell$,}
\end{gather}
or 
\begin{gather}\label{eq:uqvqpowers} 
  f\big| U_{Q^{2j+1}}\equiv f\big|U_{Q^{2j+2}}V_Q\pmod\ell\ \ \ \text{for all $j\geq 0$}.
\end{gather}
Note that  if $Q\in S$, then \eqref{eq:uqvqpowers} holds by virtue of \eqref{eq:upvpgen}  and induction on  $\mathrm{ord}_Q(n_Q)$.

Suppose that \eqref{eq:ordQeven} holds for infinitely many primes $Q$.  For each such prime, write 
\begin{gather*}
n_Q= Q^{1 + 2 e_Q}n_Q'\ \ \text{with}\ \  Q\nmid n_Q'.
\end{gather*}
Letting $Q_1$ denote the smallest such prime, we may then choose an infinite sequence of such primes $Q_2, Q_3, \dots$ successively with 
\begin{gather}\label{eq:Qjnmid}
Q_j\nmid n_{Q_1} \cdots n_{Q_{j-1}}.
\end{gather}
For each $j$ we have
\begin{gather}\label{eq:nQjprime}
a(Q_j^{1+2e_{Q_j}}n_{Q_j}')\not\equiv 0\pmod\ell.
\end{gather}
For each $p\in S$,  it follows from \eqref{eq:uqvqpowers}  that $\operatorname{ord}_p(n_{Q_j}')$ is even, and  from \eqref{eq:upvpgen} that
\begin{gather*}
  a(p^2 n)\equiv c_p a(n) \pmod\ell\ \ \ \text{for all $n$}.
\end{gather*}
Using these facts, we may remove even powers of each $p\in S$ from each $n_{Q_j}'$, and we may therefore assume in 
\eqref{eq:nQjprime} that for all $j$ and all $p\in S$ we have 
\begin{gather}\label{eq:pnmid}
p\nmid n_{Q_j}'.
\end{gather}

Let $t\geq 1$.  From \eqref{eq:nQjprime}, \eqref{eq:pnmid} and  \eqref{eq:twistgen}, 
we see that 
\begin{gather*}
  \pmfrac{Q_j n_{Q_j}'}p=\ep\ \ \ \text{for  $p\in S$ and $1\leq j\leq t$.}
\end{gather*}
Each of these quadratic relations imposes a residue class restriction on the primes $p$ that may belong to $S$. More specifically, the quadratic relation corresponding to $Q_j$ prohibits from belonging to $S$ a proportion of $1/2$ of the primes not already prohibited by the quadratic relations corresponding to $Q_1, Q_2, \cdots , Q_{j-1}$. 
Therefore 
\begin{gather*}
  \limsup_{X\to\infty}\frac{\#\{p\in S\ : \ p\leq X\}}{X/\log X}\leq 2^{-t}.
\end{gather*}
Since $t$ is arbitrary we conclude that $S$ has density zero if 
\eqref{eq:ordQeven} holds for infinitely many primes $Q$.

From \eqref{eq:ordQeven} and \eqref{eq:uqvqpowers}  we conclude that to 
prove Theorem~\ref{thm:upvpgen} it  suffices to establish the following proposition,
whose proof occupies the remainder of this section.

\begin{proposition}\label{prop:case2}
Suppose that $\ell\geq 5$ is prime,  that $(r, 24)=1$ and that $f=\sum a(n)q^\frac n{24}\in S_k\(1, \nu_\eta^r\)$
has $f\not\equiv 0\pmod\ell$.
Suppose that for all but finitely many primes $Q\geq 5$   we have
\begin{gather}\label{eq:uqvqj}
f\big| U_{Q^{2j+1}}\equiv f\big|U_{Q^{2j+2}}V_Q\pmod\ell\ \ \ \text{for all $j\geq 0$}.
\end{gather}
Then 
\begin{gather*}
\#\{ n\leq X\ : a(n)\not\equiv 0\pmod \ell\} \ll \sqrt{X} \log X,
\end{gather*}
and 
\begin{gather*}
f\big| T_{Q^2}\equiv 0\pmod\ell\ \ \text{for all primes}\ \  Q\equiv -1\pmod\ell.
\end{gather*}
\end{proposition}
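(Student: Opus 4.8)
The plan is to read off the support of $f$ modulo $\ell$ from \eqref{eq:uqvqj}, deduce the counting bound directly from it, and then use the Shimura correspondence to force the Hecke relation at the primes $Q\equiv-1\pmod\ell$.

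\emph{Support and counting.} By Lemma~\ref{lem:uQ}, the congruences $f\big|U_{Q^{2j+1}}\equiv f\big|U_{Q^{2j+2}}V_Q\pmod\ell$ for all $j\ge 0$ say exactly that $a(n)\equiv 0\pmod\ell$ whenever $\ord_Q(n)$ is odd. Let $T_0$ be the finite set consisting of $2$, $3$, $\ell$ and the finitely many primes at which \eqref{eq:uqvqj} fails, and set $M=\prod_{Q\in T_0}Q$. Then $a(n)\equiv0\pmod\ell$ unless the squarefree part of $n$ divides $M$, so $f$ is supported modulo $\ell$ on the finite union of square classes $\{tm^2:\ t\mid M\ \text{squarefree},\ m\ge1\}$, which contains at most $2^{\#T_0}\sqrt X$ integers below $X$. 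This already yields $\#\{n\le X:\ a(n)\not\equiv0\pmod\ell\}\ll\sqrt X$, hence \eqref{eq:al0}. (Alternatively, and this is the form of the argument that globalizes cleanly inside the proof of Theorem~\ref{thm:main}, one feeds the quadratic conditions satisfied by the indices of the nonzero coefficients modulo the primes outside $T_0$ into the arithmetic large sieve; this produces the slightly weaker bound $\sqrt X\log X$ without having to name $T_0$.)

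\emph{Reduction of the Hecke relation.} Since $f\big|T_{Q^2}\in S_k(1,\nu_\eta^r)$, by \eqref{eq:shimcong} it suffices to prove $\Sh_t\bigl(f\big|T_{Q^2}\bigr)\equiv0\pmod\ell$ for every squarefree $t$ coprime to $6$, and by \eqref{eq:shimliftcommute} this equals $(\Sh_t f)\big|T_Q$. From the support description and the coefficient formula \eqref{eq:shimliftcoeff}, $g_t:=\Sh_t f\equiv0\pmod\ell$ unless $t\mid M$, so only finitely many $t$ are relevant; for those, $g_t\in S_{2k-1}(288)$, and its nebentypus is trivial because the character $\pfrac{12}{\bullet}$ introduced by \eqref{eq:etamultv24} squares to the trivial character. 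The key claim, taken up below, is that each such $g_t$ is congruent modulo $\ell$ to a linear combination of Eisenstein series of weight $2k-1$ and trivial character. Granting this, since every prime $Q\ge5$ is coprime to $288$ one has $g_t\big|T_Q\equiv(1+Q^{2k-2})\,g_t\pmod\ell$; and $2k-2$ is odd, since $(r,24)=1$ forces $k\notin\Z$, so for $Q\equiv-1\pmod\ell$ we get $1+Q^{2k-2}\equiv1+(-1)^{2k-2}\equiv0\pmod\ell$. Hence $g_t\big|T_Q\equiv0$ for all relevant $t$, and therefore $f\big|T_{Q^2}\equiv0\pmod\ell$ for every prime $Q\equiv-1\pmod\ell$ --- including those lying in $T_0$.

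\emph{The main obstacle.} The remaining input is the structural fact that a half-integral weight form on $\SL_2(\Z)$ with multiplier $\nu_\eta^r$ that is supported modulo $\ell$ on only finitely many square classes has all of its Shimura lifts congruent modulo $\ell$ to Eisenstein series of trivial character. This is the place where Galois representations are needed: such extreme sparsity is incompatible with the presence of a genuinely cuspidal Hecke eigensystem in the $g_t$ --- a cuspidal eigensystem would force nonzero Fourier coefficients of $f$ along a positive proportion of square classes --- so the eigensystems that occur have reducible mod-$\ell$ Galois representation, and the level-one origin of $f$ then constrains the two characters to be unramified away from $\ell$, identifying the Eisenstein eigensystems as the ones of trivial nebentypus. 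Equivalently, one invokes the classification of lacunary modular forms modulo $\ell$ in the spirit of Serre's work on forms with many vanishing coefficients. The delicate point is making this precise while controlling the interaction of the several square classes and excluding Hecke characters of order greater than $2$; the arithmetic large sieve is the tool that converts the resulting structural statement into the explicit bound \eqref{eq:al0} when the elementary square-class count above is not invoked.
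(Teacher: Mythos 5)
Your treatment of the counting bound is correct and in fact sharper than what the paper obtains. From \eqref{eq:uqvqj}, the coefficient $a(n)$ vanishes mod $\ell$ whenever $\ord_Q(n)$ is odd for some prime $Q$ outside the finite exceptional set $T_0$, so the squarefree kernel of any $n$ with $a(n)\not\equiv 0\pmod\ell$ divides $M=\prod_{Q\in T_0}Q$, and the number of such $n\le X$ is at most $\sum_{t\mid M}\sqrt{X/t}\ll_{f,\ell}\sqrt X$. The paper does not exploit the full strength of the hypothesis at this step: it uses the evenness of $\ord_Q(n)$ only for $Q$ in a positive-density set $\mathcal Q$ of primes produced by a Chebotarev argument (Lemma~\ref{lem:chebotarev}), combines this with the quadratic conditions \eqref{eq:quadcond} coming from \eqref{eq:fuqsquare}, and feeds the result into the arithmetic large sieve, which only yields $\sqrt X\log X$. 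Your elementary square-class count is a legitimate and simpler route to the first assertion.

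The second assertion is where the proposal breaks down. Your route --- pushing everything through $\Sh_t$ and arguing that the sparsity of $f$ forces each $g_t=\Sh_t f$ to be congruent mod $\ell$ to Eisenstein series attached to characters of order at most $2$ --- rests entirely on the ``key claim,'' which you correctly flag as the main obstacle but do not prove. As stated it is a deep structural assertion (a mod-$\ell$ classification of lacunary forms), and even the intermediate step that a cuspidal eigensystem would force nonzero coefficients along a positive proportion of square classes is unproven; one would also have to exclude dihedral eigensystems and Eisenstein systems attached to characters of order greater than $2$, which are exactly the cases in which the eigenvalue $\chi_1(Q)+\chi_2(Q)Q^{2k-2}$ need not vanish at $Q\equiv-1\pmod\ell$. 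The paper avoids all of this. Proposition~\ref{prop:uquqsquare} shows, by a direct computation with $W_Q$ and the $q$-expansion principle, that the $j=0$ case of \eqref{eq:uqvqj} already implies $f\big|T_{Q^2}\equiv(1+Q^{-1})\,f\big|U_{Q^2}\pmod\ell$ for $Q\not\equiv 1\pmod\ell$, which kills $f\big|T_{Q^2}$ for every $Q\equiv-1\pmod\ell$ outside the finite exceptional set; the finitely many exceptional primes are then handled by the Chebotarev argument of Lemma~\ref{lem:chebotarev}, since if some $f\big|T_{Q_0^2}\not\equiv0\pmod\ell$ with $Q_0\equiv -1\pmod\ell$, a positive proportion of primes $Q\equiv-1\pmod\ell$ would share that nonzero eigenvalue, a contradiction. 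You should replace the Shimura-lift and Eisenstein-congruence argument with this; it uses only tools already established in the paper and closes the gap completely.
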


In the proof of Proposition~\ref{prop:case2} we will need the following result.

\begin{proposition}\label{prop:uquqsquare}
Let $\ell\geq 5$ and $Q\geq 5$ be primes with  $Q\neq\ell$ and $Q\not\equiv 1\pmod\ell$.  Suppose that $(r, 24)=1$ and that 
$f\in M_k\(1,\nu_\eta^r\)$ has 
\begin{gather}\label{eq:uquqsquare}
f\big| U_Q\equiv f\big|U_{Q^2} V_Q\pmod \ell.
\end{gather}
Then we have 
\begin{gather}\label{eq:fuqsquare}
f\big| U_{Q^2}\equiv Q^{k-\frac12} \pmfrac{-1}Q^{k-\frac12}\pmfrac{12}Q f\otimes \chi_Q+Q^{2k-1}f\big| V_{Q^2}\pmod\ell,
\end{gather}
and 
\begin{gather}\label{eq:ftqsquare}
f\big| T_{Q^2}\equiv \(1+Q^{-1}\) f\big| U_{Q^2}\pmod\ell.
\end{gather}
\end{proposition}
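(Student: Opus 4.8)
The plan is to deduce \eqref{eq:ftqsquare} from \eqref{eq:fuqsquare}, and to obtain \eqref{eq:fuqsquare} by transporting the hypothesis \eqref{eq:uquqsquare} through a Fricke involution and then unwinding it with the $q$-expansion principle.

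The first reduction is immediate: once \eqref{eq:fuqsquare} is known, the twist term and the $V_{Q^2}$ term in the explicit Hecke formula \eqref{eq:heckedef} are exactly $Q^{-1}$ times the right-hand side of \eqref{eq:fuqsquare}, so \eqref{eq:heckedef} collapses to $f\big|T_{Q^2}\equiv f\big|U_{Q^2}+Q^{-1}f\big|U_{Q^2}=(1+Q^{-1})f\big|U_{Q^2}\pmod\ell$. Hence the entire content is in \eqref{eq:fuqsquare}.

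To prove \eqref{eq:fuqsquare} I would first apply $V_{24}$ (via \eqref{eq:etamultv24}) so as to work with genuine $q$-expansions on $\Gamma(576Q^2)$, which makes Lemma~\ref{la:delignerap}, and in particular \eqref{eq:qexp}, available. By \eqref{eq:qexp} it suffices to verify \eqref{eq:fuqsquare} after slashing each of its terms by a Fricke matrix $W_M$ with $\ell\nmid M$ and $Q^2\mid M$; the same \eqref{eq:qexp} lets one put the hypothesis \eqref{eq:uquqsquare} into slashed form as well. On the right of \eqref{eq:uquqsquare}, $\left(\begin{smallmatrix}Q&0\\0&1\end{smallmatrix}\right)\left(\begin{smallmatrix}0&-1\\M&0\end{smallmatrix}\right)=Q\left(\begin{smallmatrix}0&-1\\M/Q&0\end{smallmatrix}\right)$ collapses $(f\big|U_{Q^2}V_Q)\big|W_M$ to an explicit power of $Q$ times $(f\big|U_{Q^2})\big|W_{M/Q}$. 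On the left, the computation is the one already run in the proof of Lemma~\ref{lem:uq}: write $f\big|U_Q=Q^{k/2-1}\sum_{v\,(Q)}f\big|_k\left(\begin{smallmatrix}1&24v\\0&Q\end{smallmatrix}\right)$, push each coset matrix through $W_M$, peel off the $v\equiv0$ term (a scalar multiple of $(f\big|W_{M/Q})\big|V_{Q^2}$), and for $v\not\equiv0$ factor the resulting matrix as $\gamma$ times an upper-triangular matrix with $\gamma\in\SL_2(\Z)$; evaluating $\nu_\eta^r(\gamma)$ via \eqref{eq:nueta} and \eqref{eq:oneminusd-epd1d2} and using that $r$ is odd turns the sum over $v$ into the Gauss sum $G(Q)=\ep_Q\sqrt Q$ times a twist of $f\big|W_{M/Q}$. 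Equating the two sides, and then re-expressing $(f\big|W_{M/Q})\otimes\chi_Q$ and $(f\big|W_{M/Q})\big|V_{Q^2}$ through the standard Fricke--twist and Fricke--$V_{Q^2}$ compatibilities (which close up precisely because $f$ has level one), gives the slashed form of \eqref{eq:fuqsquare}; applying \eqref{eq:qexp} to the difference of the two sides then returns \eqref{eq:fuqsquare} itself.

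I expect the main obstacle to be twofold. First, the multiplier bookkeeping: one must track the half-integral signs $\pfrac{-1}{Q}^{k-\frac12}$, $\pfrac{12}{Q}$, $\ep_Q^{\pm r}$ and the eighth roots $e\!\left(\tfrac{1-d}{8}\right)$ from \eqref{eq:nueta} through all of the $U$- and Fricke-manipulations, and keep the powers of $Q$ coming from the $(\det\gamma)^{k/2}$ normalization of $\big|_k$ aligned so that the right-hand side of \eqref{eq:fuqsquare} emerges with exactly the coefficients $Q^{k-\frac12}\pfrac{-1}{Q}^{k-\frac12}\pfrac{12}{Q}$ and $Q^{2k-1}$ --- in particular so that $Q^{2k-1}$, rather than the $Q^{2k-2}$ of \eqref{eq:heckedef}, occurs. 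Second, one of the Fricke compatibilities sends part of the right-hand side of \eqref{eq:fuqsquare} back to $f\big|U_{Q^2}$, so after collecting terms $f\big|U_{Q^2}$ appears on both sides and one must invert a factor of the form $1-Q^{-1}$ (equivalently $Q-1$) to solve for it --- this is exactly where the hypothesis $Q\not\equiv1\pmod\ell$ is used. An alternative, trading the Fricke computation for newform bookkeeping, would be to pass through the Shimura lift using \eqref{eq:shimliftcommute} and \eqref{eq:shimcong}, reducing to an integral-weight identity for the twists of level-$6$ newforms $\Sh_t(f)$, where the Hecke recursion at $Q$ and multiplicativity of newform coefficients replace the explicit coset calculation; there too one ends up dividing by $Q-1$.
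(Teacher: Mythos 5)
Your proposal follows essentially the same route as the paper's proof: the paper likewise deduces \eqref{eq:ftqsquare} from \eqref{eq:fuqsquare} via \eqref{eq:heckedef}, and obtains \eqref{eq:fuqsquare} by slashing the hypothesis \eqref{eq:uquqsquare} with a Fricke matrix (here simply $W_Q$, justified by \eqref{eq:qexp}), computing $f\big|U_Q\sk W_Q$ as a Gauss-sum twist plus a $V_{Q^2}$ term and $f\big|U_{Q^2}V_Q\sk W_Q$ as $Q^{-1}f\big|U_Q\sk W_Q$ plus $(1-Q^{-1})$ times a constant multiple of $f\big|U_{Q^2}$, so that one solves for $f\big|U_{Q^2}$ after dividing by $1-Q^{-1}$ --- exactly the point where $Q\not\equiv 1\pmod\ell$ enters, as you anticipated. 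The one detail your outline passes over is that the Ramanujan sum arising in the second computation leaves behind a term supported on $n$ with $\ord_Q(n)=1$, which must be discarded using the immediate consequence $a(n)\equiv 0\pmod\ell$ for $Q\mid\mid n$ of the hypothesis \eqref{eq:uquqsquare}; this is a one-line observation, not a gap in the strategy.
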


\begin{proof}[Proof of Proposition~\ref{prop:uquqsquare}]
Let $\ell$, $Q$ and $f$ be as in the hypothesis.
By \eqref{eq:uqaction} and \eqref{eq:vqaction} we have 
\begin{gather*}
  f \big| U_Q
\in
  M_{k}\(Q, \chi_Q\nu_\eta^{Qr}\)
\tx{,}\quad
  f \big| U_{Q^2} V_Q
\in
  M_{k}\(Q^2, \chi_Q\nu_\eta^{Qr}\)
\tx{.}
\end{gather*}
Then~\eqref{eq:uquqsquare} and~\eqref{eq:qexp} give
\begin{gather}
\label{eq:uq-qexp}
f\big| U_Q\sk W_Q\equiv f\big|U_{Q^2} V_Q\sk W_Q\pmod \ell.
\end{gather}

We first claim that 
\begin{gather}\label{eq:uqwq}
  f\big|U_Q\sk W_Q
=
  Q^{\frac k2-1}e\pmfrac{-rQ}{8}\pmfrac{-24}Q G(Q)\, f\otimes \chi_Q
  +
  Q^{\frac{3k}2-1}e\pmfrac{-r}8 f\big|V_{Q^2}
.
\end{gather}
To establish \eqref{eq:uqwq} we compute using Lemma~\ref{lem:uQ}.
We have 
\begin{gather}\label{eq:uqwq1}
f\big|U_Q\sk W_Q=Q^{\frac k2-1}\sum_{v\spmod Q}f\sk\left[\pmatrix{24vQ}{-1}{Q^2}0, Q^\frac12z^\frac12\right].
\end{gather}
Using \eqref{eq:nueta}, the term with $v=0$ in \eqref{eq:uqwq1} gives 
\begin{multline}\label{eq:vzero}
Q^{\frac k2-1} f\sk\pmatrix 0{-1}{1}0^*\left[\pmatrix {Q^2}001, Q^{-\frac12}\right]
=Q^{\frac k2-1}e\pmfrac{-r}8f\sk\left[\pmatrix {Q^2}001, Q^{-\frac12}\right]
\\=Q^{\frac{3k}2-1}e\pmfrac{-r}8 f\big|V_{Q^2}.
\end{multline}
For each  $v\not\equiv 0\pmod Q$ in \eqref{eq:uqwq1} we choose $\alpha$ with 
\begin{gather*}
24 \alpha v\equiv 1\pmod Q\ \ \ \ \text{and} \ \  \alpha=24 \alpha'\ \ \text{with}\ \  \alpha'\in \Z.
\end{gather*}
Then the terms in \eqref{eq:uqwq1} with $v\not\equiv 0\pmod Q$ contribute
\begin{gather*}
Q^{\frac k2-1}\sum_{v\spmod Q^*}f\sk \pmatrix{24v}{\frac{24\alpha v-1}Q}Q\alpha^*\left[ \pmatrix Q{-\alpha}0Q, 1\right].
\end{gather*}
Using \eqref{eq:nueta} this becomes 
\begin{gather*}Q^{\frac k2-1}e\pmfrac{-rQ}8  \sum_{v\spmod Q^*}\pmfrac\alpha Q f\(\tau-\mfrac\alpha Q\)=
Q^{\frac k2-1}e\pmfrac{-rQ}8 \sum_n a(n)q^\frac n{24}  \sum_{v\spmod Q^*}\pmfrac{24\alpha'} Q \zeta_Q^{-n\alpha'}.
\end{gather*}
Since the inner sum evaluates to $\pfrac{-24 n}Q G(Q)$, the claim \eqref{eq:uqwq} follows from this equation together with \eqref{eq:vzero}.

We next claim that if \eqref{eq:uquqsquare} holds then 
\begin{gather}\label{eq:uqwq2}
f\big| U_{Q^2}V_Q\sk W_Q\equiv Q^{-1}f\big| U_Q\sk W_Q+Q^{-\frac k2}(1-Q^{-1})e\pmfrac{-r}8 f\big|U_{Q^2}\pmod\ell.
\end{gather}
To prove this, we use Lemma~\ref{lem:uQ}  to compute 
\begin{gather}\label{eq:uqvqwq}
f\big| U_{Q^2}V_Q\sk W_Q=Q^{\frac k2-2}\sum_{v\spmod{Q^2}} f\sk \left[\pmatrix{24v}{-1}{Q^2}0, Q^\frac12z^\frac12\right].
\end{gather}
Using \eqref{eq:uqwq1}, we see that the terms in \eqref{eq:uqvqwq} with $v\equiv 0\pmod Q$ give
\begin{gather}\label{eq:vzero1}
Q^{\frac k2-2}\sum_{v'\spmod Q}f\sk\left[\pmatrix{24v'Q}{-1}{Q^2}0, Q^\frac12z^\frac12\right]=Q^{-1}f\big|U_Q\sk W_Q.
\end{gather}
For $v\not\equiv0\pmod Q$,  choose $\alpha$ with 
\begin{gather*}
24 \alpha v\equiv 1\pmod {Q^2}\ \ \ \ \text{and} \ \  \alpha=24 \alpha'\ \ \text{with}\ \  \alpha'\in \Z.
\end{gather*}
Using \eqref{eq:nueta}, the terms in \eqref{eq:uqvqwq} with $v\not\equiv0\pmod Q$ give
\begin{multline}\label{eq:vnonzero}
  Q^{\frac k2-2}
  \sum_{\substack{v \spmod{Q^2}\\v\not\equiv 0\spmod Q}}
  f\sk\pmatrix{24v}{\frac{24\alpha v-1}{Q^2}}{Q^2}\alpha^*\left[\pmatrix1{-\alpha}0{Q^2}, Q^\frac12\right]
=
  Q^{\frac k2-2}e\pmfrac{-r}8
  \sum_{\substack{v \spmod{Q^2}\\v\not\equiv 0\spmod Q}}
  f\sk\left[\pmatrix1{-\alpha}0{Q^2}, Q^\frac12\right]\\
=
  Q^{-\frac k2-2}e\pmfrac{-r}8
  \sum_n a(n)q^\frac n{24Q^2}
  \sum_{\substack{v \spmod{Q^2}\\v\not\equiv 0\spmod Q}}
  \zeta_{Q^2}^{-n\alpha'}
\text{.}
\end{multline}
The inner sum is
\begin{gather*}
  \sum_{\substack{v\spmod{Q^2}\\v\not\equiv 0\spmod Q}}
  \zeta_{Q^2}^{n v}
=
  \sum_{v\spmod{Q^2}}\zeta_{Q^2}^{nv}-\sum_{v\spmod Q}\zeta_{Q}^{nv}
=\begin{cases}
	Q^2-Q\ \ &\text{if $Q^2\mid n$},\\
	-Q \ \ &\text{if $Q\mid\mid n$},\\
	0 \ \ &\text{if $Q\nmid n$}.\\
\end{cases}
\end{gather*}
From \eqref{eq:uquqsquare} we have $a(n)\equiv 0\pmod\ell$ if $Q\mid\mid n$.  Therefore the expression in 
\eqref{eq:vnonzero} is congruent to 
\begin{gather*}
  Q^{-\frac k2-2}(Q^2-Q)e\pmfrac{-r}8\sum a(Q^2n)q^\frac{n}{24}
\equiv   Q^{-\frac k2}(1-Q^{-1})e\pmfrac{-r}8f\big|U_{Q^2} \pmod\ell.
\end{gather*}
The claim \eqref{eq:uqwq2} follows from this together with \eqref{eq:vzero1}.

If $Q\not\equiv 1\pmod\ell$ it follows from \eqref{eq:uq-qexp} and  \eqref{eq:uqwq2} that 
\begin{gather*}
f\big| U_Q\sk W_Q\equiv Q^{-\frac k2}e\pmfrac{-r}8 f\big|U_{Q^2} \pmod \ell.
\end{gather*}
Combining this with  \eqref{eq:uqwq} gives
\begin{gather*}
f\big| U_{Q^2}\equiv Q^{k-1}e\pmfrac{r(1-Q)}8\pmfrac{-24}Q G(Q)\, f\otimes \chi_Q+Q^{2k-1} f \big| V_{Q^2}\pmod\ell
\tx{.}
\end{gather*}
 From \eqref{eq:oneminusd-epd1d2} and~\eqref{eq:gausssum}, we obtain
\begin{gather*}
f\big| U_{Q^2}\equiv Q^{k-\frac12}\ep_Q^{r+1}\pmfrac{-12}Q  \, f\otimes \chi_Q+Q^{2k-1} f \big| V_{Q^2}\pmod\ell 
\tx{.}
\end{gather*}
From \eqref{eq:krcond} we have $2k\equiv r\pmod 4$.
This gives \eqref{eq:fuqsquare},  and  \eqref{eq:ftqsquare} then  follows from the definition~\eqref{eq:heckedef} of the Hecke operator.
\end{proof}

We require a lemma before turning to the proof of Proposition~\ref{prop:case2}.

\begin{lemma}\label{lem:chebotarev}  Under the hypotheses of Proposition~\ref{prop:case2}, 
suppose that $Q_0\geq 5$, $Q_0\neq\ell$ is a prime with $Q_0\not\equiv \pm 1\pmod\ell$ for which \eqref{eq:uqvqj} holds.
Define
\begin{multline*}
  \mQ
:=
  \big\{
  Q\text{ prime}
  \,:\quad
  \text{\eqref{eq:uqvqj} holds for $Q$,}\quad
  Q \equiv Q_0\pmod{12\ell}
  \quad\text{and}
\\
  f\big| T_{Q^2}\equiv f\big|T_{Q_0^2}\pmod\ell
  \big\}
\tx{.}
\end{multline*}
Then 
\begin{gather}\label{eq:mqden}
\#\{ Q\in \mQ\ : \ Q\leq X\} \gg \frac X{\log X}.
\end{gather}
\end{lemma}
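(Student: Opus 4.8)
The plan is to realize $\mQ$ as (essentially) a set of primes cut out by splitting conditions in a suitable number field, so that the Chebotarev density theorem gives a positive density. First I would observe that the congruence class condition $Q\equiv Q_0\pmod{12\ell}$ already defines a set of primes of positive density by Dirichlet. The substantive content is the Hecke-eigenvalue condition $f\big|T_{Q^2}\equiv f\big|T_{Q_0^2}\pmod\ell$. Here I would pass to the Shimura lift: by \eqref{eq:shimliftimage} and \eqref{eq:shimliftcommute}, for each squarefree $t$ coprime to $6$ the form $\Sh_t(f)$ lies in $S_{2k-1}^{\operatorname{new}}(6)\otimes\pfrac{12}\bullet$ and satisfies $\Sh_t\bigl(f\big|T_{Q^2}\bigr)=(\Sh_t f)\big|T_Q$. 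Thus, modulo $\ell$, the operator $T_{Q^2}$ acting on the relevant space is governed by the $T_Q$-eigenvalues on a finite-dimensional space of integral-weight newforms of level $6$. The datum $\{f\big|T_{Q^2}\bmod\ell\}$ is therefore determined by the tuple of Hecke eigenvalues $a_Q$ of these newforms mod $\ell$, which are the traces of Frobenius at $Q$ on the mod-$\ell$ Galois representations attached to them.

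Next I would assemble the relevant Galois representations. Let $\rho$ be the direct sum of the semisimplified mod-$\ell$ representations attached to the finitely many newforms $F$ occurring as $\Sh_t(f)\otimes\pfrac{12}\bullet$ as $t$ runs over squarefree integers coprime to $6$ (finitely many matter, since $\Sh_t(f)$ vanishes mod $\ell$ for all but finitely many $t$ by \eqref{eq:shimcong}, or more precisely only finitely many distinct nonzero images occur). Adjoining the cyclotomic character of conductor $12\ell$ to record the congruence class of $Q$, we get a representation $\rho$ with finite image, cut out by a finite Galois extension $K/\Q$. The conditions ``$Q\equiv Q_0\pmod{12\ell}$'' and ``$\Tr\rho(\Frob_Q)=\Tr\rho(\Frob_{Q_0})$'' (componentwise) single out a union of conjugacy classes in $\Gal(K/\Q)$ that is nonempty — it contains the class of $\Frob_{Q_0}$ itself. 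By Chebotarev, the set of primes $Q$ with $\Frob_Q$ in this union has positive density, hence $\gg X/\log X$.

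Finally I would reconcile this with the definition of $\mQ$, which also requires that \eqref{eq:uqvqj} hold for $Q$. By hypothesis \eqref{eq:uqvqj} holds for all but finitely many primes, so removing finitely many primes does not affect the density. Combining, $\mQ$ contains all but finitely many primes in a positive-density Chebotarev set, giving \eqref{eq:mqden}. The main obstacle I anticipate is bookkeeping around half-integral weight: one must check carefully that the mod-$\ell$ value of $f\big|T_{Q^2}$ — equivalently the full $q$-expansion of $f\big|T_{Q^2}\bmod\ell$ — is genuinely recovered from the finite collection of integral-weight $T_Q$-eigenvalue data, which is exactly the content of \eqref{eq:shimcong} applied to $f\big|T_{Q^2}-f\big|T_{Q_0^2}$; and one must ensure the Galois representations in play are defined over a fixed finite field (take a common extension) so that ``equal trace of Frobenius'' is a conjugacy-class condition. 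Once those points are pinned down, the density statement is immediate from Chebotarev.
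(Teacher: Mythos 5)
Your proposal is correct and follows essentially the same route as the paper: pass to the integral-weight newforms of level $6$ via the Shimura lift using \eqref{eq:shimcong} and \eqref{eq:shimliftcommute}, note that only finitely many nonzero reductions $F_t \bmod \ell$ occur, attach Deligne's Galois representations to the newforms together with the mod~$12\ell$ cyclotomic character, and apply Chebotarev to the conjugacy class of $\Frob_{Q_0}$, discarding the finitely many primes that fail \eqref{eq:uqvqj}. The only detail you leave implicit is that the coefficients $c_{i,j}$ expressing each $F_{t_j}$ in the newform basis need not be $\pi$-integral, which the paper handles by taking the representations modulo $\pi^m$ with $m = \max(1, 1 - \min(\ord_\pi(c_i)))$ so that congruence of eigenvalues mod $\pi^m$ still forces $F_{t_j}\big|T_Q \equiv F_{t_j}\big|T_{Q_0} \pmod{\pi}$.
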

\begin{proof}[Proof of Lemma~\ref{lem:chebotarev}]

Suppose that $f\in S_k\(1, \nu_\eta^r\)$ and that $f\not\equiv 0\pmod\ell$.  
For each squarefree $t$ let
\begin{gather*}
F_t\in S_{2k-1}^{\operatorname{new}}(6)
\end{gather*}
be the form with 
\begin{gather*}
\Sh_t f=F_t\otimes\pmfrac{12}\bullet.
\end{gather*}

By \eqref{eq:shimcong} and \eqref{eq:shimliftcommute}, 
for all primes $Q\geq 5$ we have 
\begin{gather}\label{eq:ftzero}
f\big|T_{Q^2}\equiv f\big|T_{Q_0^2} \pmod\ell \iff F_t\big|T_Q\equiv F_t\big|T_{Q_0}\pmod\ell\ \ \ \text{for all $t$}.
\end{gather}
As $t$ ranges over all squarefree integers, there are only finitely many non-zero possibilities for $F_t\pmod\ell$.
Let  $\{F_{t_1}, \dots, F_{t_k}\}$ be a collection which represents all of these possibilities; from the definition  \eqref{eq:shimliftcoeff} we see that this collection is
not empty.

The space 
$S_{2k-1}^{\operatorname{new}}(6 )$ is spanned by newforms $g_1, \dots, g_d$. 
Write
\begin{gather*}
F_{t_j}=\sum_{i=1}^d c_{i, j} g_i.
\end{gather*}
 Let $L$ be the number field generated by the coefficients of $g_1, \dots g_d$ as well as the collection $\{c_{i, j}\}$
  and let $\mathcal{O}_L$ be the ring of integers.
 Let $\pi$ be a prime ideal above $\ell$ in $\mathcal{O}_L$.
Define 
 \begin{gather*}
   m := \max(1, 1 - \min(\ord_{\pi}(c_i))) > 0.
 \end{gather*}
 For each  $i$, it follows from the work of Deligne  (see, for example, \cite[Thm. 6.7]{Deligne-Serre})   that there is
a Galois representation 
\begin{gather*}
\rho_i : \Gal(\overline{\Q}/\Q) \rightarrow \GL_2(\mathcal{O}_L/\pi^m),
\end{gather*}
 unramified outside of $6\ell$, such that if
\begin{gather*}
g_i =\sum_{n=1}^{\infty} a_i(n)q^n,
\end{gather*} then for all primes  $Q \nmid 6\ell$, we have
\begin{gather*}
\Tr  (\rho_i(\Frob_Q)) \equiv a_i(Q) \pmod{\pi^m}.
\end{gather*}
Let $\psi$ denote the mod $12\ell$ cyclotomic character.  

All of the $\rho_i$ and~$\psi$ have finite images.
By the Chebotarev density theorem, there is a set of primes $\mathcal P$ with  positive lower density such that for  $Q\in \mathcal P$ we have 
\begin{gather*}
a_i(Q)\equiv a_i(Q_0)\pmod{\pi^m}\ \ \text{for all $i$}
\end{gather*} 
and $\psi(Q)=\psi(Q_0)$.  
For such $Q$ we have
\begin{gather*}
F_{t_j}\big|T_Q=\sum_{i=1}^d c_{i, j} a_i(Q)g_i\equiv \sum_{i=1}^d c_{i, j} a_i(Q_0)g_i\equiv F_{t_j}\big|T_{Q_0}\pmod\pi.
\end{gather*}
By  assumption, all but finitely many primes~$Q$ satisfy \eqref{eq:uqvqj}. The lemma follows from \eqref{eq:ftzero}.
\end{proof}

\begin{proof}[Proof of Proposition~\ref{prop:case2}]
Fix a prime $Q_0\geq 5$ with $Q_0\neq \ell$ and  $Q_0\not\equiv \pm 1\pmod\ell$ for which \eqref{eq:uqvqj}  holds,
and let $\mQ$ be the set provided by Lemma~\ref{lem:chebotarev} (recall that $Q_0\in \mathcal Q$).
Define
\begin{gather*}
c:=Q_0^{k-\frac12} \pmfrac{-1}{Q_0}^{k-\frac12}\pmfrac{12}{Q_0}.
\end{gather*}
Then for $Q\in \mQ$, \eqref{eq:fuqsquare} and \eqref{eq:ftqsquare} give 
\begin{gather}\label{eq:QQzero}
\sum \pmfrac nQ a(n)q^\frac n{24}+c \sum a(n) q^\frac{Q^2 n}{24}\equiv \sum \pmfrac n{Q_0} a(n)q^\frac n{24}+c \sum a(n) q^\frac{Q_0^2 n}{24}\pmod\ell.
\end{gather}
It follows that
\begin{gather}\label{eq:QQzero1}
\sum_{(n, QQ_0)=1} \left[\pmfrac nQ-\pmfrac n{Q_0}\right] a(n)q^\frac n{24}\equiv 0\pmod\ell.
\end{gather}

From  \eqref{eq:uqvqj}, we see that 
\begin{gather*}
a(n)\not\equiv0\pmod\ell\implies \operatorname{ord}_Q(n)\ \ \text{is even for all $Q\in \mQ$},
\end{gather*}
  and from \eqref{eq:fuqsquare} we see that for all $Q\in \mQ$ we have
\begin{gather*}
  a(Q^2n)\not\equiv 0\pmod\ell \implies a(n)\not\equiv 0\pmod\ell.
\end{gather*}
We conclude that if $a(n)\not\equiv 0\pmod\ell$ then 
\begin{gather}\label{eq:Mdef}
\text{$n=M^2 n'$ where  $M$ is divisible only by primes in $\mQ$, and $Q\nmid n'$ for all $Q \in \mQ$.}
\end{gather}
Moreover,  for such values of $n$, \eqref{eq:QQzero1} gives 
\begin{gather}\label{eq:quadcond}
\pmfrac{n'}Q=\pmfrac{n'}{Q_0}\ \ \ \text{for all $Q\in \mQ$}.
\end{gather}

We now apply the arithmetic large sieve \cite{Montgomery}.  Let $N$ be a parameter, and let $\mN$ be the set of integers 
consisting of all $n\leq N$ with $a(n)\not\equiv 0\pmod\ell$. Write $\mN = \mN_1 \cup \mN_{-1}$, where
\begin{gather}
\mN_{\ep} := \left\{ n \in \mN: \pmfrac{n'}{Q_0} = \ep \right\},
\end{gather}
and let $Z_{\ep} := | \mN_{\ep }|$. 

Fix $\ep \in \{ \pm 1\}$ and write $\mN_{\ep} = \{ n_1, \cdots, n_{Z_{\ep}} \}$.   Using \eqref{eq:Mdef} and \eqref{eq:quadcond} we see that for each $i\in \{1, \dots, Z_{\ep}\}$, we have
\begin{gather*}\pmfrac{n_i}Q= 0\ \ \ \text{or}\ \ \ \pmfrac{n_i}Q= \ep \ \ \ \text{for all $Q\in \mQ$}.\end{gather*}
For each prime $Q$,  define 
\begin{gather*}w(Q):=\begin{cases}
	\frac{Q-1}2\  \ &\text{if $Q\in \mQ$},\\
	0\ \  &\text{else}.
	\end{cases}
\end{gather*}
Let $X$ be another parameter.
Then for each $Q\leq X$, there are $w(Q)$  residue classes modulo $Q$ which  contain no element of $\mN_{\ep}$.
By \cite{Montgomery} we have
\begin{gather*} 
Z_{\ep} \leq \mfrac{\(N^{\frac12}+X\)^2}M,
\end{gather*}
with 
\begin{gather*}M:=\sum_{\substack{Q\leq X\\ Q\ \text{prime}}}\frac{w(Q)}{Q-w(Q)}=
\sum_{\substack{Q\leq X\\Q\in \mQ} }\frac{Q-1}{Q+1}\gg \frac X{\log X},\end{gather*}
where in the last estimate we have used \eqref{eq:mqden}.
Setting $X=\sqrt{N}$ gives $Z_{\ep} \ll \sqrt N\log N$.  Thus 
\begin{gather*}
|\mN| = Z_1 + Z_{-1} \ll \sqrt N \log N.
\end{gather*}
This proves the first assertion in Proposition~\ref{prop:case2}.

We turn to the second assertion of Proposition~\ref{prop:case2}.    By the hypotheses,  for  all but finitely many primes  $Q\equiv -1\pmod\ell$,
\eqref{eq:ftqsquare} gives
\begin{gather}\label{eq:tqzero}
f\big| T_{Q^2}\equiv 0\pmod \ell.
\end{gather}
Suppose that there is a prime $Q_0\equiv -1\pmod\ell$ for which 
$f\big| T_{Q_0^2}\not\equiv 0\pmod \ell$.
Arguing as in Lemma~\ref{lem:chebotarev}, we see that a positive proportion of primes $Q\equiv -1\pmod\ell$ have
\begin{gather*}
f\big| T_{Q^2}\equiv f\big| T_{Q_0^2}\not\equiv 0\pmod\ell.
 \end{gather*}
 It follows that  such a prime $Q_0$ does not exist.  In other words, \eqref{eq:tqzero} holds for all $Q\equiv -1\pmod\ell$.
The last assertion of  Proposition~\ref{prop:case2} follows.
\end{proof}


\section{Proof of Theorem~\ref{thm:UQQVQQ}}\label{sec:square}

We start by proving a lemma which we will employ to handle the first  case of Theorem~\ref{thm:UQQVQQ}. The proof is similar in spirit to the proof of Lemma~\ref{lem:uq}.
Since there are a number of technical differences  we  present a self-contained proof for the reader's benefit.
\begin{lemma}\label{lem:levelQuQ}
Suppose that $\ell \geq 5$ and $Q \geq 5$ are primes with $Q \neq \ell$.  
Suppose that $(r, 24)=1$, that $a\in \{0, 1\}$, and that $f\in M_k\(Q,\chi_Q^a\nu_\eta^r\)$.
Then
\begin{gather*}
  f \big| U_Q \equiv 0 \pmod\ell
\iff
  f \equiv 0 \pmod\ell
\tx{.}
\end{gather*}
\end{lemma}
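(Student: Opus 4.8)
My plan is to follow the pattern of Lemma~\ref{lem:uq}, the difference being that $Q$ now divides the level. The implication $f\equiv 0\pmod\ell\Rightarrow f\big|U_Q\equiv 0\pmod\ell$ is immediate from the definition of $U_Q$, so I will prove the converse. Write $f=\sum a(n)q^{n/24}$, so that $f\big|U_Q\equiv 0\pmod\ell$ means $a(n)\equiv 0\pmod\ell$ for every $n$ with $Q\mid n$; we must deduce $f\equiv 0\pmod\ell$. Set $W_Q:=\pmatrix{0}{-1}{Q}{0}$ and $g:=f\sk W_Q$. Since $\ell\geq 5$ and $\ell\neq Q$, the $q$-expansion principle in the form \eqref{eq:qexp} applies: it gives $\ell$-integrality of the Fourier coefficients of $g$ and the equivalence $f\equiv 0\pmod\ell\iff g\equiv 0\pmod\ell$, so it suffices to prove $g\equiv 0\pmod\ell$. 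I will also use, exactly as in Lemma~\ref{lem:uq}, that the half-integral weight theta multiplier forces $g$ to have an expansion in \emph{integral} powers of $q^{1/24}$, say $g=\sum_m b(m)q^{m/24}$; concretely this follows by passing to $f\big|V_{24}$, a theta-multiplier form on $\Gamma_0(576Q)$ with integral-power expansion, since $g\big|V_{24}=(f\big|V_{24})\sk W_{576Q}$.

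The next step is to evaluate $(f\big|U_Q)\sk W_Q$. By Lemma~\ref{lem:uQ}, $f\big|U_Q=Q^{\frac k2-1}\sum_{v\spmod Q}f\sk\pmatrix{1}{24v}{0}{Q}$, hence $(f\big|U_Q)\sk W_Q=Q^{\frac k2-1}\sum_{v\spmod Q}f\sk\pmatrix{24vQ}{-1}{Q^2}{0}$. The term $v=0$ factors as $\pmatrix{0}{-1}{Q^2}{0}=W_Q\pmatrix{Q}{0}{0}{1}$ and so contributes $Q^{k-1}g\big|V_Q=Q^{k-1}\sum_m b(m)q^{Qm/24}$, whose nonzero Fourier coefficients lie exactly at indices divisible by $Q$ and record every coefficient of $g$. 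For $v\not\equiv 0\pmod Q$ one has $(v,Q)=1$ (here $Q\geq 5$ is used), so choosing $b_v,d_v$ with $24vd_v-b_vQ=1$ gives $\pmatrix{24vQ}{-1}{Q^2}{0}=\pmatrix{24v}{b_v}{Q}{d_v}\pmatrix{Q}{-d_v}{0}{Q}$ with $\pmatrix{24v}{b_v}{Q}{d_v}\in\Gamma_0(Q)$; therefore this summand equals a root of unity times $f(\tau-d_v/Q)=\sum_n a(n)e\!\left(\tfrac{-nd_v}{24Q}\right)q^{n/24}$, so its coefficient of $q^{n/24}$ is a unit multiple of $a(n)$. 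Summing over $v\neq 0$, the whole contribution of those terms to the coefficient of $q^{n/24}$ is again a multiple of $a(n)$.

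To finish: by hypothesis and \eqref{eq:qexp} we have $(f\big|U_Q)\sk W_Q\equiv 0\pmod\ell$. Comparing the coefficient of $q^{n/24}$ for an index $n$ divisible by $Q$, the $v\neq 0$ part contributes a multiple of $a(n)$, which vanishes modulo $\ell$ precisely because $Q\mid n$; thus the coefficient equals $Q^{k-1}b(n/Q)$, forcing $b(n/Q)\equiv 0\pmod\ell$. Letting $n$ run through all multiples of $Q$ shows that every Fourier coefficient of $g$ is divisible by $\ell$, i.e.\ $g\equiv 0\pmod\ell$, and therefore $f\equiv 0\pmod\ell$.

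The one point requiring care — and the analogue of the normalization used in Lemma~\ref{lem:uq} — is the claim that $g=f\sk W_Q$ has an expansion in integral powers of $q^{1/24}$ rather than merely of $q^{1/(24Q)}$; this is exactly what makes $g\big|V_Q$ supported on indices divisible by $Q$ and so decouples the contribution of $g$ from that of $f$ in the coefficient comparison. The remaining ingredients (the matrix factorizations and the bookkeeping of roots of unity via \eqref{eq:nueta} and \eqref{eq:oneminusd-epd1d2}) are routine; in particular no Gauss sum evaluation is needed for this lemma, which is the main simplification compared with the proof of Lemma~\ref{lem:uq}.
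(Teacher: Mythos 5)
Your proof is correct. It shares the paper's skeleton --- apply $W_Q$ to $f\big|U_Q$, expand via Lemma~\ref{lem:uQ}, identify the $v=0$ term as $Q^{k-1}\,(f\sk W_Q)\big|V_Q$, and invoke \eqref{eq:qexp} twice --- but it handles the $v\not\equiv 0\pmod Q$ terms in a genuinely different way. The paper evaluates the resulting character sum explicitly, splitting into cases according to the parity of $a+r$ (a Ramanujan-type sum in one case, a Gauss sum in the other), and then concludes by noting that the resulting pieces of $f\big|U_Q\sk W_Q$ are supported on disjoint sets of exponents. You avoid any evaluation: each $v\neq 0$ term is a root of unity times $f(\tau-d_v/Q)$, so its contribution to the coefficient of $q^{n/24}$ is an $\ell$-integral multiple of $a(n)$, which for $Q\mid n$ already vanishes modulo $\ell$ by the very hypothesis $f\big|U_Q\equiv 0\pmod\ell$; the coefficient comparison at indices divisible by $Q$ then isolates $Q^{k-1}b(n/Q)$ exactly as you say, and $Q^{k-1}$ is a unit at primes above $\ell$. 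Your route is shorter and dispenses with the parity case split and the Gauss sum; the paper's explicit computation has the side benefit of producing a closed formula for $f\big|U_Q\sk W_Q$ of the kind reused elsewhere (compare Proposition~\ref{prop:uquqsquare}). You also correctly flag the one delicate point --- that $f\sk W_Q$ has an expansion in integral powers of $q^{1/24}$ --- which the paper establishes by the same observation in a slightly different dress, namely that $W_Q$ conjugates translation by $24$ into an element of $\Gamma_0(Q)$ on which the multiplier is trivial.
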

\begin{proof}
Only one direction requires proof.
By Lemma~\ref{lem:uQ}, we have 
\begin{gather}\label{eq:levq1}
f\big|U_Q\sk W_Q=Q^{\frac k2-1}\sum_{v \spmod{Q}}f\sk\left[\pmatrix{24vQ}{-1}{Q^2}0, Q^\frac12z^\frac12\right].
\end{gather}
The term with $v=0$ in \eqref{eq:levq1} gives 
\begin{gather}\label{eq:levq2}
Q^{\frac k2-1}f\sk\left[\pmatrix{0}{-1}Q0, Q^\frac14z^\frac12\right]\left[\pmatrix Q001, Q^{-\frac14}\right]=Q^{k-1}f\sk W_Q\big|V_Q.
\end{gather}
For each  $v\not\equiv 0\pmod Q$ in \eqref{eq:levq1} we choose $\alpha$ with 
\begin{gather*}
24 \alpha v\equiv 1\pmod Q\ \ \ \ \text{and} \ \  \alpha=24 \alpha'\ \ \text{with}\ \  \alpha'\in \Z.
\end{gather*}

Write $f=\sum a(n)q^\frac n{24}$.
From  \eqref{eq:nueta},  the terms in \eqref{eq:levq1} with $v\not\equiv 0\pmod Q$ contribute
\begin{multline}\label{eq:levq3}
Q^{\frac k2-1}\sum_{v \spmod{Q}^*}f\sk \pmatrix{24v}{\frac{24\alpha v-1}Q}Q\alpha^*\left[ \pmatrix Q{-\alpha}0Q, 1\right]
=Q^{\frac k2-1}e\pmfrac{-rQ}8\sum_{v \spmod{Q}^*}\pmfrac\alpha Q^{a+r}f\sk \left[\pmatrix Q{-\alpha}0Q, 1\right]\\
=Q^{\frac k2-1}e\pmfrac{-rQ}8\sum_n a(n)q^\frac n{24} \sum_{v \spmod{Q}^*}\pmfrac\alpha Q^{a+r}\zeta_Q^{-n\alpha'}.
\end{multline}
Suppose first that $a+r$ is even.
Then the inner sum is $Q-1$ if $Q\mid n$ and $-1$ if $Q\nmid n$.
So the last expression becomes
\begin{gather*}
Q^{\frac k2-1}e\pmfrac{-rQ}8(Q-1)f\big|U_Q V_Q-Q^{\frac k2-1}e\pmfrac{-rQ}8\sum_{Q\nmid n}a(n)q^\frac n{24}.
\end{gather*}
From this and \eqref{eq:levq2} we obtain
\begin{gather*}
f\big|U_Q\sk W_Q=Q^{k-1}f\sk W_Q\big|V_Q
+Q^{\frac k2-1}e\pmfrac{-rQ}8(Q-1)f\big|U_Q V_Q-Q^{\frac k2-1}e\pmfrac{-rQ}8\sum_{Q\nmid n}a(n)q^\frac n{24}.\end{gather*}
If $f\big|U_Q\equiv 0\pmod\ell$ then $f\big|U_QV_Q \equiv 0\pmod\ell$, and
$f\big|U_Q\sk W_Q\equiv 0\pmod\ell$ by~\eqref{eq:qexp}.
Therefore 
\begin{gather}\label{eq:levq31}
Q^{k-1}f\sk W_Q\big|V_Q\equiv Q^{\frac k2-1}e\pmfrac{-rQ}8\sum_{Q\nmid n}a(n)q^\frac n{24}\pmod\ell.
\end{gather}

We have  
\begin{gather*}
W_Q\pmatrix1{24}01^*=\pmatrix10{-24Q}1^*W_Q=\left[\pmatrix{-1}00{-1}, -i\right]\pmatrix{-1}0{24Q}{-1}^*W_Q.
\end{gather*}
From  \eqref{eq:nueta} we see that $(f\sk W_Q)(\tau+24)=(-i)^{-2k}e\pfrac{-r}4\chi_Q^a(-1)(f\sk W_Q)(\tau)$.
The compatibility condition \eqref{eq:krcond} can be expressed as $\chi_Q^a(-1)=\pfrac{-1}r(-1)^{k-\frac12}$;
it follows that $(f\sk W_Q)(\tau+24)=(f\sk W_Q)(\tau)$.
 Therefore the  Fourier expansion of the left side of \eqref{eq:levq31} is supported on exponents in $\frac{Q}{24}\mathbb{Z}$. 
 Since the right side is supported on  exponents prime to  $Q$, we see that $f\sk W_Q\equiv 0\pmod\ell$.
Using \eqref{eq:qexp} again, we obtain $f\equiv 0\pmod\ell$.

If on the other hand $a+r$ is odd,  the inner sum in \eqref{eq:levq3} reduces to $\pfrac{-24 n}Q G(Q)$, and we obtain
\begin{gather*}
f\big|U_Q\sk W_Q=Q^{k-1}f\sk W_Q\big|V_Q+
Q^{\frac k2-1}e\pmfrac{-rQ}8\pmfrac{-24}Q G(Q) f\otimes \chi_Q.
\end{gather*}
If $f\big|U_Q\equiv 0\pmod\ell$ then we conclude that $f\equiv 0\pmod\ell$ as before.
\end{proof}

We will need the following statement about Kloosterman sums
\begin{gather*}
  K(a,b,c)
:=
  \sum_{n \,(c)^\ast}
  e\left(\mfrac{a n + b \overline{n}}{c}\right)
\tx{,}
\end{gather*}
where the summation is over residue classes prime to~$c$ and~$n \overline{n} \equiv 1 \;\pmod{c}$.
\begin{lemma}
\label{la:kloosterman_sum_nonzero}
Let~$a, b$ be integers and~$p$, $\ell$ distinct primes, $\ell$~odd. Then we have
\begin{gather*}
  K(a',b',p) \not\equiv 0 \;\pmod{\ell}
\tx{.}
\end{gather*}
\end{lemma}
\begin{remark}
The authors are grateful to Will Sawin, who pointed out a proof of this lemma.
\end{remark}
\begin{proof}
The Kloosterman sum takes values in algebraic integers of the~$p$-th cyclotomic field:
\begin{gather*}
  \Z(\zeta_p)
\cong
  \Z[X] \slash \big( X^{p-1} + X^{p-2} + \cdots + 1 \big)
\cong
  \big( \Z[X] \slash (X^p - 1) \big) \big\slash \big( X^{p-1} + X^{p-2} + \cdots + 1 \big)
\tx{.}
\end{gather*}
Reduction modulo~$\ell$ intertwines with the polynomial quotient, yielding a ring over the finite field with~$\ell$ elements. It therefore suffices to examine the sum
\begin{gather*}
  \sum_{0 < s < p}
  X^{as + b\overline{s}}
=
  \sum_{i = 0}^{p-1}
  c_i X^i
\in
  \Z[X] \slash (X^p - 1)	
\tx{.}
\end{gather*}
The lemma follows if we show that the coefficients~$c_i$ of~$X^i$, $0 \le i < p$, are not all equal modulo~$\ell$. We suppose the opposite and derive a contradiction.

Notice that~$as + b \overline{s} \equiv h \,\pmod{p}$ for any~$h \,\pmod{p}$ yields a qua\-dra\-tic equation in~$s$, which has at most two solutions. In particular, we have~$c_i \in \{0, 1, 2\}$. Since~$\ell$ is odd, $0$, $1$, and~$2$ are distinct modulo~$\ell$. We are assuming that all~$c_i$ are equal modulo~$\ell$, and hence they coincide as integers. Comparing the number of terms in the sum with the possible values of the~$c_i$, we find that $p-1 = \sum_i c_i = p c_0 \in \{0, p, 2p\}$, a contradiction.
\end{proof}

\begin{proof}[{Proof of Theorem~\ref{thm:UQQVQQ}}]
 
We begin with the first assertion. Suppose that there is a congruence \eqref{eq:quadcong} with $Q^2 \mid 24\beta_{\ell, Q}-1$, 
and set $\delta=\pfrac{1-24\beta_{\ell, Q}}\ell$.  Then  Theorem~\ref{thm:raduclasses} implies that 
$f_{\ell,\delta} \big| U_{Q^2} \equiv 0 \pmod{\ell}$, while the first assertion of Theorem~\ref{thm:UQVQ}  implies that $f_{\ell,\delta} \big| U_Q \not\equiv 0 \pmod{\ell}$.
In view of \eqref{eq:uqaction}, this contradicts Lemma~\ref{lem:levelQuQ}.

The second assertion  follows directly  from Lemma 4.5 of \cite{Radu_aoconj}.
The proof of the third assertion  is more difficult.
To begin, we proceed as in the proof of Theorem~\ref{thm:UQVQ}.   Suppose that there is a congruence
\begin{gather*}
p(\ell Q^2 n + \beta_0) \equiv 0 \pmod\ell \ \ \ \text{with}  \ \ \  Q\mid\mid 24\beta_0-1,
\end{gather*}
 and define 
\begin{gather*}
\delta:=\pmfrac{1-24 \beta_0}{\ell}\in\{0, -1\}.
\end{gather*}
Recalling the definitions
\eqref{eq:pmtdef}, \eqref{eq:gammadef},
suppose that
\begin{gather*}
\beta \in S_{\ell Q^2, \beta_0},
\end{gather*}
and let 
\begin{gather*}
  \gamma_{\ell, Q^2}
=
  \pmatrix{1}{-24^2\ell X}{\ell}{Q^2Y}\in \SL_2({\Z})
\tx{.}
\end{gather*}
Recall the definition \eqref{eq:gmdef}.
Arguing as in \eqref{eq:giszero} with $Q$ replaced by $Q^2$, we conclude that 
\begin{gather*}
(\ell\tau+Q^2Y)^\frac12g(\ell Q^2, \beta,  \gamma_{\ell,Q^2} \tau)\equiv 0\pmod\ell.
\end{gather*}

  Let $H_{\ell, Q^2, \beta}$ denote the resulting right side of  \eqref{eq:Radutransformation} with $Q$ replaced by $Q^2$; we have 
\begin{gather}\label{eq:fqsqzero}
H_{\ell, Q^2, \beta}\equiv 0\pmod\ell.
\end{gather}
There are three terms in the sum defining $H_{\ell, Q^2, \beta}$.
Recalling that $T(n, Q^2)=1$, the $d=Q^2$ term becomes
\begin{gather}\label{eq:qsqterm}
Q^{-1} q^{\frac{\(24 t_{Q^2,\beta} - 1\)Q^2}{24 \ell}}
  \sum_{n=0}^{\infty} p(\ell n+ t_{Q^2,\beta} ) q^{ Q^2 n}
=
Q^{-1}
  \sum_{n \equiv 24 t_{Q^2,\beta} - 1 \spmod{ \ell}}
  p\pmfrac{n+1}{24} q^{\frac{Q^2 n}{24 \ell}}
\tx{.}
\end{gather}

For the $d=Q$ term,  \eqref{eq:tndsalie}    gives
\begin{gather*}
  T(n,Q)
=
  \pmfrac{-\ell}{Q}
  S\big(24 \beta -1, \overline{24}^2 \overline{\ell}^4 (24(\ell n + t_{Q,\beta}) -1), Q \big)
\tx{.}
\end{gather*}
Since $Q \mid   24\beta-1$, this reduces to 
\begin{gather*}
  T(n,Q)
=
  \pmfrac{-\ell}{Q}
  \pmfrac{24(\ell n + t_{Q,\beta}) - 1}{Q}
  G(Q)
\tx{.}
\end{gather*}
Substituting~$n$ for $24 (\ell n + t_{Q,\beta}) - 1$ and using~\eqref{eq:oneminusd-epd1d2} and~\eqref{eq:gausssum},  the $d=Q$~term 
 becomes
\begin{multline}\label{eq:qterm}
Q^{-\frac12}e\pmfrac{1-Q}8 G(Q) (-1)^\frac{Q-1}2  \pmfrac{-24}{Q} 
  \sum_{n \equiv 24 t_{Q,\beta} -1 \spmod{\ell}}
  \pmfrac{n}{Q} p\pmfrac{n+1}{24} q^{\frac{n}{24\ell}}
\\
=
  \pmfrac{-12}{Q}
  \sum_{n \equiv 24 t_{Q,\beta} -1 \spmod{\ell}}
  \pmfrac{n}{Q} p\pmfrac{n+1}{24} q^{\frac{n}{24\ell}}
\tx{.}
\end{multline}

For the $d=1$ term, \eqref{eq:tndsalie} gives
\begin{gather*}
  T(n,1)
=
  S\big( 24 \beta -1, \overline{24}^2 \overline{\ell}^4 (24(\ell n + t_{1,\beta}) -1), Q^2 \big)
\tx{.}
\end{gather*}
 This Salie sum vanishes whenever $Q \nmid 24 (\ell n + t_{1,\beta}) -1$ (this follows from the explicit evaluation \cite[Lemma~12.4]{iwaniec_kowalski}).
For the other terms, the  Salie sum reduces to a Kloosterman sum; we have 
\begin{gather*}
  T(n,1)
=
  Q\, K \left( \mfrac{24\beta-1}{Q}, \overline{24}^2 \overline{\ell}^4 \mfrac{24(\ell n + t_{1,\beta}) - 1}{Q} , Q \right)\ \ \ \text{if}\ \ \ Q \mid 24 (\ell n + t_{1,\beta}) -1
\tx{.}
\end{gather*}
When  $Q^2\mid 24(\ell n + t_{1,\beta}) - 1$
the Kloosterman sum reduces to  a Ramanujan sum which evaluates to $-1$. 
 Changing variables as above,
the $d=1$~term of the sum defining $H_{\ell, Q^2, \beta}$ becomes
\begin{multline}\label{eq:qzeroterm}
  - Q \sum_{Q^2 n \equiv 24 t_{1, \beta}-1 \spmod{ \ell}}
  p\pmfrac{Q^2 n +1}{24} q^{\frac{n}{24\ell}}
\\
  +
 Q  \sum_{\substack{Qn \equiv 24t_{1, \beta} -1 \spmod{ \ell} \\ Q \nmid n}}
  K \left( \mfrac{24\beta-1}{Q}, \overline{24}^2 \overline{\ell}^4n , Q \right)
  p\pmfrac{Qn+1}{24} q^{\frac{n}{24 \ell Q}}
\tx{.}
\end{multline}

Using  \eqref{eq:qsqterm}, \eqref{eq:qterm} and \eqref{eq:qzeroterm}, we collect the terms of $H_{\ell, Q^2, \beta}$ whose exponents are in  $\frac1{24\ell}\Z$   to find that 
the following expression vanishes modulo~$\ell$:
\begin{multline}
\label{eq:integralpowers}
 Q^{-1}  \left(
 \sum_{n \equiv 24 t_{Q^2,\beta} - 1 \spmod \ell}
  p\pmfrac{n+1}{24} q^{\frac{n}{24\ell}}
  \right) \Big| V_{Q^2}
  \,+\,
  \pmfrac{-12}{Q}
  \sum_{n \equiv 24 t_{Q,\beta} - 1 \spmod{ \ell}}
 \pmfrac n Q
  p\pmfrac{n+1}{24}
  q^{\frac{n}{24\ell}}
\\
  -\,
  Q
  \left(
  \sum_{n \equiv 24 t_{1, \beta} - 1 \spmod{\ell}}
  p\pmfrac{n+1}{24}
  q^{\frac{n}{24\ell}}
  \right) \Big| U_{Q^2}
\tx{.}
\end{multline}
On the other hand, the sum of   those terms of $H_{\ell, Q^2, \beta}$ whose exponents are not in  $\frac1{24\ell}\Z$
must also vanish modulo $\ell$, giving 
\begin{gather}\label{eq:nonintegralpowers}
  \sum_{\substack{Qn \equiv 24t_{1, \beta} -1 \spmod{\ell} \\ Q \nmid n}}
  K \left( \mfrac{24\beta-1}{Q}, \overline{24}^2 \overline{\ell}^4n , Q \right)
  p\pmfrac{Qn+1}{24\ell}
  q^{\frac{n}{24 \ell Q}}
  \equiv
  0
  \pmod{\ell}
\tx{.}
\end{gather}

Define 
 \begin{gather*}
 S':=\{\beta\in S_{\ell Q^2, \beta_0}: \beta\equiv \beta_0\pmod {Q^2}\}.
 \end{gather*}
Then $S'$ contains one representative 
for each residue class $\beta\pmod \ell$
with 
\begin{gather*}
\pmfrac{1-24\beta}\ell=\pmfrac{1-24\beta_0}\ell=\delta.
\end{gather*}
For each $d$, we see by  \eqref{eq:tddef}     that $t_{d,\beta}$ ranges over those residue classes $t\pmod\ell$ 
with $\pfrac{1-24t}\ell=\delta$ as $\beta$ ranges over
$S'$.
So for each $d$ we have
\begin{gather}\label{eq:fldel}
f_{\ell, \delta}\equiv  \sum_{\beta\in S' }\sum_{n \equiv 24 t_{d,\beta} - 1 \spmod \ell}
  p\pmfrac{n+1}{24} q^{\frac{n}{24}}\pmod\ell.
\end{gather}
Replacing $q$ by $q^\ell$ in \eqref{eq:integralpowers} and using    \eqref{eq:fldel} gives
\begin{gather}\label{eq:concl1}
Q^{-1}  f_{\ell,\delta} \big| V_{Q^2}
+
   \pmfrac{-12 }{Q}  f_{\ell,\delta} \otimes \chi_Q
-
  Q f_{\ell,\delta} \big| U_{Q^2}
\equiv
  0 \pmod{\ell}
\tx{,}
\end{gather}
which is equivalent to \eqref{eq:Qiffone}.
Replacing $q$ by $q^{\ell Q}$ in \eqref{eq:nonintegralpowers}, summing over $\beta\in  S'$, and noting that for fixed~$n$ the Kloosterman sums which appear are independent of the choice of $\beta$,  we obtain
\begin{gather}\label{eq:concl2}
  \sum_{\substack{\pfrac{-Qn}{\ell} = \delta \\ Q\nmid n}}
  K \left(\mfrac{24\beta_0-1}{Q}, \overline{24}^2 \overline{\ell}^4 n,Q \right)
  p\pmfrac{Qn+1}{24}
  q^{\frac{n}{24}}
\equiv
  0 \pmod{\ell}
\tx{.}
\end{gather}

By Lemma~\ref{la:kloosterman_sum_nonzero}, the  Kloosterman sums in~\eqref{eq:concl2} are not divisible by~$\ell$.
It follows that $a_{\ell, \delta}(Q n) \equiv 0 \pmod{\ell}$ for all $n$ with~$Q \nmid n$, which is equivalent to \eqref{eq:Qifftwo}.
This establishes one direction of the third assertion of  Theorem~\ref{thm:UQQVQQ}.

For the other direction, suppose  that \eqref{eq:concl1} and \eqref{eq:concl2} 
hold for some $\beta_0$ with  $ \pfrac{1- 24\beta_0}{\ell} = \delta$ and $Q \mid\mid  24\beta_0-1$. 
By \eqref{eq:fldel}, we see that \eqref{eq:integralpowers} and \eqref{eq:nonintegralpowers} hold with $\beta$ replaced by $\beta_0$.
In other words, we have $H_{\ell, Q^2, \beta_0}\equiv 0\pmod\ell$, from which 
\begin{gather*}
(\ell\tau+Q^2Y)^\frac12g(\ell Q^2, \beta_0, \gamma_{\ell,Q^2} \tau)\equiv 0\pmod\ell.
\end{gather*}
By Lemma~\ref{la:delignerap}
we conclude that $g(\ell Q^2, \beta_0,   \tau)\equiv 0\pmod\ell$.
\end{proof}


\section{Computations}\label{sec:comp}

Throughout this section, we assume that $\ell, Q \ge 5$  are distinct primes.
The source code for each computation is available on the last named  author's homepage (\url{https://www.raum-brothers.eu/martin}).

\subsection{Proof of Theorem~\ref{thm:nosmallcong}}

Suppose that there is a  congruence
\begin{gather*}
  p\big( \ell Q n + \beta_{\ell,Q} \big)
\equiv
  0
  \;\pmod{\ell}
\end{gather*}
as in~\eqref{eq:Qellcong}.
Define $\delta = \left(\frac{1 - 24 \beta_{\ell,Q}}{\ell}\right) \in \{ 0, -1 \}$, and  set
\begin{alignat*}{2}
  \mathcal{B}
&:=
  \Big\{
    \mfrac{24 \beta - 1}{\ell} \,:\,
    \beta \in \Z,\, \left(\mfrac{24 \beta - 1}{\ell}\right) = \delta,\,
    p(\beta) \not\equiv 0 \pmod{\ell}
  \Big\}
\text{,}\quad
&&
  \text{if $\delta = 0$,}
\\
  \mathcal{B}
&:=
  \Big\{
    24 \beta - 1 \,:\,
    \beta \in \Z,\, \left(\mfrac{24 \beta - 1}{\ell}\right) = \delta,\,
    p(\beta) \not\equiv 0 \pmod{\ell}
  \Big\}
\text{,}\quad
&&
  \text{if $\delta = -1$.}
\end{alignat*}
Define~$\ep = \left(\frac{24 \beta_{\ell,Q} - 1}{Q}\right)\in\{\pm1\}$. 
By  Theorem~\ref{thm:raduclasses}, it follows that there are congruences
\begin{gather*}
  p(\ell Q n + \beta) \equiv 0 \pmod{\ell}
\end{gather*}
for all~$\beta \in S_{\ell Q, \beta_{\ell,Q}}$, where~$S_{\ell Q, \beta_{\ell,Q}}$ is as in~\eqref{eq:pmtdef}.
 In particular, we have
\begin{gather}
\label{eq:computations:obstruction}
\left( \mfrac{n}{Q} \right) \ne \ep\ \ \text{for all}\ \ n \in \mathcal{B} 
\text{.}
\end{gather}
Since the set~$\mathcal{B}$ does not depend on~$Q$, this opens the door to rule out possible~$Q$ via a sieve-like computation.

Given a finite subset~$\mathcal{N} \subset \mathcal{B}$ and a bound~$Q_{\mathrm{max}}$, Algorithm~\ref{alg:search} computes the set
\begin{gather}
\label{eq:alg:search:result}
  \mathcal{Q}
:=
  \Big\{
  Q \le Q_{\mathrm{max}} \,:\,
  Q \text{ prime},\,
  \tx{there is }\ep \in \{\pm 1\}\tx{ such that for all }n \in \mathcal{N}, \pmfrac{n}{Q} \ne \ep
  \Big\}
\text{.}
\end{gather}
If~$\mathcal{Q} \subseteq \{2, 3, \ell\}$ for~$\delta = -1$ or 
$\mathcal{Q} \subseteq \{2, 3, 5, 7, 11, \ell\}$ for~$\delta = 0$, we conclude that there are no congruences~\eqref{eq:Qellcong} for~$Q \le Q_{\mathrm{max}}$ that do not  arise from the Ramanujan congruences. If there are  a few excess primes in~$\mathcal{Q}$, we can check each individually to conclude that no corresponding congruence holds.

\SetAlCapSkip{0.5\baselineskip}
\begin{algorithm}[ht]
\caption{Search for congruences~\eqref{eq:Qellcong}}
\label{alg:search}
\KwData{A finite list of positive integers~$\mathcal{N}$, positive integers~$Q_{\mathrm{max}}$ and~$Q_{\mathrm{stride}}$.}
\KwResult{The set~$\mathcal{Q}$ defined in~\eqref{eq:alg:search:result}.}
Choose a subset~$\mathcal{P}$ of primes in $\mathcal{N}$ such that $\prod_{p \in \mathcal{P}} p \approx Q_{\mathrm{max}} \slash Q_{\mathrm{stride}}$\;
$\mathcal{N} \leftarrow \mathcal{N} \setminus \mathcal{P}$\;
$\mathcal{Q} \leftarrow \emptyset$\;
\For{$\ep \in \{ \pm 1 \}$}{
  \For{$p \in \mathcal{P}$}{
    $\mathcal{Q}_p \leftarrow \big\{ n \pmod{p} \,:\, \left(\frac{p}{Q}\right) \ne \ep \big\}$\;
  }
  \For{$(Q_p) \in \prod \mathcal{Q}_p$}{
    \For{$Q \le Q_{\mathrm{max}}$ a probable prime with $Q \equiv Q_p \,\pmod{p}$ for all~$p \in \mathcal{P}$}{
      \If{$\left(\frac{n}{Q}\right) \ne \ep$ for all $n \in \mathcal{N}$}{
        \If{$Q$ is a prime}{
          $\mathcal{Q} \leftarrow \mathcal{Q} \cup \{Q\}$\;
	}
      }
    }
  }
}
\end{algorithm}

We conclude with two remarks on Algorithm~\ref{alg:search}. In practice, the most time consuming part of computing~\eqref{eq:alg:search:result} is to iterate through primes,
 while it is very cheap to evaluate the square-class conditions. For this reason, Algorithm~\ref{alg:search} iterates through probable primes and only filters out 
 non-primes after ensuring the square-class conditions. Splitting a set of primes~$\mathcal{P}$ off the original~$\mathcal{N}$ reduces the number of transitioned
  primes~$Q$ by a factor of~$2^{\#\mathcal{P}}$. The role of~$Q_{\mathrm{stride}}$ in this context is to avoid extra computations that in practice arise from
   imposing the congruences~$Q \equiv Q_p \pmod{p}$. The choice of $Q_{\mathrm{stride}}$ does not impact the result of Algorithm~\ref{alg:search}, but its
    performance. We have observed that changing $Q_{\mathrm{stride}}$ by no more than a factor of~$10$ can deteriorate performance by as much as a factor~$20$.

We provide an implementation that is based on the computer algebra package Nemo \cite{nemo,nemo-2017}. To prove Theorem~\ref{thm:nosmallcong}
we employed it to verify the absence of congruences~\eqref{eq:Qellcong} for $\ell \le 1,000$ and $Q \le 10^{13}$, and $\ell \le 10,000$ and $Q \le 10^{9}$. 
Using a set~$\mathcal{N}$ of size at most~$100$, Algorithm~\ref{alg:search} in the former computation excludes all  but $3019$ pairs~$(\ell,Q)$ 
and in the latter computation all but $2$ pairs.  The exceptional pairs  can all be ruled out by slightly enlarging $\mathcal{N}$.

\subsection{Proof of Corollary~\ref{cor:nocase2}}

In Theorem~\ref{thm:mainpart} and its refinement~\ref{thm:main}, we state that the set~$S$ of primes~$Q$ for which there exists a congruence~\eqref{eq:partcong} for fixed~$\pmfrac{1 - 24 \beta_{\ell,Q}}{\ell} = \delta$ has density zero or the estimate~\eqref{eq:al0} and the Hecke congruences~\eqref{eq:tq0} hold. We have ruled out the latter for all primes~$17 \le \ell \le 10^4$, for $\ell = 11$ if $\delta = -1$, and for $\ell = 13$ if $\delta = 0$ by a computer calculation.  Observe that the missing cases~$\ell \in \{5,7,11\}$ and~$\delta = 0$ correspond to the Ramanujan congruences, and that the cases $\ell \in \{5,7,13\}$ and~$\delta = -1$ correspond to Atkin's congruences for the partition function~\cite[Theorem~1]{atkin_multiplicative}.

Our code relies heavily  on Johansson's performant implementation of the partition function~\cite{johansson} available in ARB~\cite{arb} through Nemo~\cite{nemo,nemo-2017}. The computation proceeds by enumerating primes~$Q \equiv -1 \pmod{\ell}$ until we have found~$n$ with~$(n,Q) = 1$ and~$\pfrac{-n}{\ell} = \delta$ that falsifies the congruence
\eqref{eq:partitionhecke}.

\subsection{Proof of Corollary~\ref{cor:noquadcong}}

We proceed in a similar way to rule out congruences~\eqref{eq:quadcong}. Theorem~\ref{thm:UQQVQQ} lists a number of necessary conditions that can be effectively checked, which we have falsified for all primes~$Q \le 10^4$ and the following primes~$\ell$: for $\ell = \{5,7,11\}$ if $\delta = -1$, for $\ell = 13$ if~$\delta = 0$ and for~$17 \le \ell \le 10^4$ if~$\delta \in \{0,-1\}$.

The implementation is more involved than the previous one, but its effectiveness relies equally on Johansson's implementation of the partition function. Specifically, for each~$\ell$ and~$Q$, we try to falsify~\eqref{eq:Qiffone} by computing
\begin{gather*}
  p\pmfrac{Q^2n+1}{24}
  -
  \pmfrac{-12 \ell}{Q} Q^{-1} 
  p\pmfrac{n+1}{24}
  \pmod{\ell}
\end{gather*}
for a few~$n$ with~$(n,Q) = 1$ and~$\pfrac{-n}{\ell} = \delta$. This suffices to handle many~$Q$. Only if this falsification fails, we employ~\eqref{eq:Qifftwo} by finding~$n$ with~$(n,Q) = 1$ and $\pfrac{-n}{\ell} = \delta$ such that
\begin{gather*}
  p\pmfrac{Qn+1}{24}
\not\equiv
  0
  \pmod{\ell}
\tx{.}
\end{gather*}

\bibliographystyle{amsalpha}
\bibliography{part_cong}

\end{document}